\definecolor{dmagenta}{rgb}{.4,.1,.5}
\definecolor{007}{rgb}{.0,.0,.7}
\definecolor{dred}{rgb}{.5,.0,.0}
\definecolor{dgreen}{rgb}{.0,.5,.0}
\definecolor{dblue}{rgb}{.0,.0,.5}
\definecolor{violet}{rgb}{.3,.0,.9}
\definecolor{orange}{cmyk}{0,.5,.1,.0}
\definecolor{dcyan}{cmyk}{.5,.0,.0,.0}
\definecolor{dyellow}{cmyk}{.0,.0,.5,.0} 
\definecolor{cm}{cmyk}{1,.0,.0,.0}
\numberwithin{equation}{section}
\def\bye{\end{document}} \def\by{\end{proof}\bye}
\DeclareMathOperator{\newmid}{\,:\,}
\DeclareMathOperator{\Tr}{Tr}
\def\mid{\newmid}
\def\mat{\R^d\otimes \R^d}
\newtheorem{theorem}{Theorem}[section]
\newtheorem{lemma}{Lemma}[section]
\newtheorem{proposition}{Proposition}[section]
\theoremstyle{definition}
\newtheorem{definition}{Definition}[section]
\newtheorem{condition}{Condition}[section]
\theoremstyle{remark}
\newtheorem{remark}{Remark}[section]
\newcommand{\grad}{\nabla}
\DeclareMathOperator{\Exp}{\mathbb{E}}
\DeclareMathOperator{\Prob}{\mathbb{P}}
\newcommand{\D}{\mathrm{d}}
\newcommand{\M}{\mathbf{M}}
\newcommand{\R}{\mathbb{R}}
\newcommand{\B}{\mathbb{B}}
\newcommand{\Rd}{\mathbb{R}^d}
\newcommand{\Act}{\mathbb{U}}
\newcommand{\Uadm}{\mathfrak{M}}
\newcommand{\I}{\mathbf{I}}
\newcommand{\Ir}{\mathbf{1}}
\newcommand{\calB}{\mathcal{B}}
\newcommand{\calC}{\mathcal{C}}
\newcommand{\calCup}{\mathcal{C}^\uparrow}
\newcommand{\calF}{\mathcal{F}}
\newcommand{\calH}{\mathcal{H}}
\newcommand{\calI}{\mathcal{I}}
\newcommand{\calL}{\mathcal{L}}
\newcommand{\calS}{\mathcal{S}}
\newcommand{\calT}{\mathcal{T}}
\newcommand{\ph}{\varphi}
\newcommand{\al}{\alpha}
\newcommand{\lam}{\lambda}
\newcommand{\gam}{\gamma}
\newcommand{\kap}{\kappa}
\newcommand{\eps}{\varepsilon}
\newcommand{\del}{\delta}
\newcommand{\bvnorm}[1]{[\kern-0.45ex[\kern0.1ex #1 \kern0.1ex]\kern-0.45ex]}
\newcommand{\abs}[1]{\lvert#1\rvert}
\newcommand{\norm}[1]{\lVert#1\rVert}
\newcommand{\babs}[1]{\bigl\lvert#1\bigr\rvert}
\newcommand{\bnorm}[1]{\bigl\lVert#1\bigr\rVert}
\newcommand{\df}{:=}
\DeclareMathOperator{\dist}{dist}
\DeclareMathOperator{\co}{co}
\begin{document}

\title[HJB with controlled reflections]
{On viscosity solution of HJB equations with state constraints and reflection control}

\author{Anup Biswas}
\address{Department of Mathematics,
Indian Institute of Science Education and Research,
Dr. Homi Bhabha Road, Pune 411008, India.}
\email{anup@iiserpune.ac.in}
\author{Hitoshi Ishii}
\address{Faculty of Education and Integrated Arts and Sciences, 
Waseda University, Shinjuku,Tokyo, 169-8050 Japan.}
\email{hitoshi.ishii@waseda.jp}
\author{Subhamay Saha}
\address{ Department of Electrical Engineering,
Technion, Haifa 32000, Israel.}
\email{subhamay585@gmail.com}
\author{Lin Wang}
\address{Yau Mathematical Sciences Center, Tsinghua University, Beijing, 100084, China.}
\email{lwang@math.tsinghua.edu.cn}

\date{\today}

\begin{abstract}
Motivated by a control problem of a certain queueing network we consider a control problem where the dynamics is constrained in the nonnegative orthant $\Rd_+$
of the $d$-dimensional Euclidean
space and controlled by the reflections at the faces/boundaries. We define a discounted value function associated to this problem and show that the value function is a viscosity
solution to a certain HJB equation in $\Rd_+$ with nonlinear Neumann type boundary condition. Under certain conditions, we also characterize this value function as the unique
solution to this HJB equation.
\end{abstract}

\subjclass[2000]{93E20, 60H30, 35J25, 49L25}

\keywords{Skorokhod map with reflection control, queues with help, viscosity solutions, nonlinear Neumann boundary, non-smooth domain, heavy-traffic, stochastic network.}

\maketitle

\tableofcontents
\section{Introduction}
The chief goal of this article is to characterize the value function for discounted control problem
associated to certain constrained dynamics (see \eqref{1}) as a viscosity solution of certain Hamilton-Jacobi-Bellman (HJB) equation
with nonlinear Neumann type boundary condition. The controlled diffusion takes values in $\Rd_+$,
non-negative orthant of the Euclidean space $\Rd$. The control process is matrix valued and represents
the reflection vectors at certain time. This problem is motived by queueing network problem where
there are $d$ classes of customers in the queueing system with $d$ number of servers, for every server there is a priority class of customers, and customers from a non-priority class may access the server provided its priority class
is empty at that instant (see Section~\ref{motive} for more details). One of the interesting features of this model is its control process which acts through reflection. To the authors knowledge
such model has not been studied before. For existence and uniqueness results for such dynamics we follow \cite{ram}. The constrained dynamics in \cite{ram} does not have any control
component and in fact, the author assumes certain regularity properties on the reflection matrix in the time variable. We show in Theorem~\ref{Thm-existence} that such regularity is not needed to establish existence of a unique adapted solution.

The value function is defined to be the infimum of a certain discounted cost function (see \eqref{3}) where the infimum is taken over all \emph{admissible} controls.  Admissible
controls are processes that do not predict the future. Discounted cost functions are quite common in queueing theory (see \cite{atar-budhiraja} and references therein).
The discounted cost considered here has two components (see \eqref{2.5}), the running
cost $\ell\in\calC_{\mathrm{pol}}(\Rd_+)$ and the cost $h_i\in\calC(\mat)$, with $i\in \calI:=\{1, \ldots, d\}$, associated to the reflections . We show that the value function is in
$\calC_{\mathrm{pol}}(\Rd_+)$ and is a viscosity solution to the following HJB equation (see \eqref{4})
\begin{equation}\label{HJB}
\left\{\begin{split}
\calL V -\beta V + \ell &= 0 \quad  \text{in}\; \; \left(\Rd_+\right)^\circ,
\\
\calH_i (DV(x)) &= 0 \quad \text{ for }x\in \partial \R^d_+,\ 
i\in\I(x),
\end{split}
\right.
\end{equation}
where
$$\calH_i(p)\, \df\, \min_{M\in\M}\{p\cdot M e_i + h_i(M)\}\, , \quad i\in\calI,$$
$\M$ is a certain subset of $\mat$, and $\I(x)$ denotes the collection of indices $i\in\calI$ for which $x_i=0$. 
We refer \cite{crandall-ishii-lions} for more
details on viscosity solutions.
We note that the boundary condition here is of nonlinear Neumann type. The major hurdles in obtaining the uniqueness of viscosity
solution are the non-smooth property of the boundary and the nonlinear nature of the boundary condition.  
The authors of \cite{atar-dupuis, dupuis-ishii-90, dupuis-ishii} study viscosity solution in a non-smooth domain but with
linear Neumann type boundary condition. In \cite{bor-budhi} viscosity solution framework is 
used to characterize the value function of an ergodic control problem with dynamics given by controlled constrained diffusions in  a polyhedral domain.
 Nonlinear Neumann boundary condition is studied in
\cite{barles, ishii-sato} (see also the references therein) for domains with regular boundary (at least $\calC^1$). The key idea in proving uniqueness of viscosity solution is to construct a suitable test function.
We could establish the uniqueness under some assumptions on $h_i$
(see Theorem~\ref{Thm-main}(b)). The construction of test function also uses the structure of nonlinear boundary condition.

The organization of this article is as follows. Section~\ref{S-main} introduces the control problem settings and the main results. Then in Section~\ref{proofs(a)} we establish regularity properties of the value function and prove that the value function is a viscosity solution to \eqref{HJB}, while the uniqueness 
of a viscosity solution to \eqref{HJB} is established in Section~\ref{S-uni} 
under the assumption of the existence of a right test function. 
Section~\ref{const-tf} is devoted to establishing the existence of a test function.     
Finally, some auxiliary results are proved in the Appendix.

{\bf Notations:} By $\Rd$ we denote the $d$-dimensional Euclidean space equipped with the Euclidean norm $\abs{\cdot}$. By $\{e_1, \ldots, e_d\}$ we denote the standard orthonormal basis in $\Rd$. We write $\calI$ for the set $\{1,\ldots,d\}$ and set 
$\Rd_+=\{x=(x_1,\ldots,x_n)\in\R^d : x_i\geq 0 \text{ for all } i\in\calI\}$.  We denote $\Ir=(1,\ldots, 1)\in\R^d_+$. 
We denote by $\mat$ the set of all $d\times d$ real matrices and we endow this space with the usual metric.
For $a, b\in\R$ we denote the maximum (minimum) of $a$ and $b$  as $a\vee b$ ($a\wedge b$, respectively). 
We define $a^+=a\vee 0$ and $a^-=- a\wedge 0$. Trace of a matrix $A\in\mat$ is denoted by $\Tr A$.
For $x\in\Rd_+$ we define $\B_r(x)= \{y\in\Rd_+\, :\, \abs{y-x}<r\}$ for $r>0$.
Given a topological space $\mathcal{X}$ and $B\subset\mathcal{X}$,
the interior, complement and boundary of $B$ in $\mathcal{X}$ is denoted by $B^\circ, B^c$ and $\partial B$, respectively.
By $\calB(\mathcal{X})$ we denote the Borel $\sigma$-field of $\mathcal{X}$. Let $\calC([0, \infty) : \mathcal{X})$
be the set of all continuous functions from $[0, \infty)$ to $\mathcal{X}$. By $\calCup_0([0, \infty) : \Rd_+)$ we denote
the set of all continuous paths that are non-decreasing (component-wise) with initial value $0$. We define $\calC^2(\Rd)$
as the set of all real valued twice continuously differentiable functions on $\Rd$. $\calC_{\mathrm{pol}}(\Rd_+)$ denotes the
set of all real valued continuous functions $f$ with at most polynomial growth i.e.,
$$\limsup_{|x|\to 0}\frac{\abs{f(x)}}{\abs{x}^k}= 0\quad \text{for some }\; k\geq 0.$$
Infimum over empty set is regarded as $+\infty$. $C, \kappa_1, \kap_2, \ldots,$ are deterministic positive constants whose values might change from line to line.

\section{Setting and main result}\label{S-main}
We consider the problem in the domain $\Rd_+$. 
Let $(\Omega, \calF, \Prob)$ be a complete probability space on which a filtration $\{\calF_t\}_{t\geq 0}$
satisfying the usual hypothesis is given. Let $(W_t, \calF_t)$
be a $d$-dimensional standard Wiener process (Brownian motion) on the
above probability space, i.e., the Wiener process $W_t$ is adapted to $\calF_t$, $t\geq 0$,
 and for every $t, s \geq 0$, $W_{t+s}-W_t$ is independent of $\calF_t$. Let $b:\Rd_+\to\Rd$ be a mapping and $\Sigma$ be a $d\times d$ matrix satisfying the following condition.
\begin{condition}\label{cond1}
There exists a $K\in(0, \infty)$ such that
$$\babs{b(x)-b(y)}\leq K \babs{x-y} \quad \text{and}\quad \abs{b(x)}\leq K \quad \text{ for all }\ x, \, y\in\Rd_+.$$
Also the matrix $A=\Sigma\Sigma^T$ is non-singular.
\end{condition}
We note that Condition~\ref{cond1} in particular, implies that $\Sigma$ is non-singular. Let $\al\in [0, 1]^d$. We define
$$\M(\al) \df\, \{M=[\mathbb{I}-P]\in \mat\, :\, P_{ii}=0, P_{ij}\geq 0\,\, \text{and}  \sum_{j\neq i}P_{ji}\leq \alpha_i\},$$
where $\mathbb{I}$ denotes the $d\times d$ identity matrix. We endow that set $\M(\alpha)$ with the metric of $\mat$ and therefore $\M(\al)$ forms a compact
metric space. We assume that
\begin{condition}\label{cond2}
$$\max_{i}\, \al_i<1.$$
\end{condition}
In what follows, $\al$ is assumed to be fixed and now onwards we write $\M(\al)$ as $\M$. 
Given $x\in \Rd_+$ we consider the following reflected stochastic differential equation
\begin{equation}\label{1}
\begin{split}
 X(t)  &= x +\int_0^t b(X(s))\, ds + \Sigma\, W(t) + \int_0^t v(s)\, dY(s),\qquad t\geq 0,
\\
\int_0^t X_i(s)\, dY_i(s) &=  0 \quad \text{for}\; i\in\calI,
\quad \: X\geq 0,\: Y\geq 0, \; t\geq 0\,,
\end{split}
\end{equation}
where the process $v$ takes values in the compact metric space $\M$, and $v(s, \omega)$ is jointly measurable in $(s, \omega)$. We also assume that the control process
$v$ is $\{\calF_t\}$ adapted. It is easy to see that
$v$ is \textit{non-anticipative} : For $s<t$, $W(t)-W(s)$ is independent of
$\sigma\{W(r), v(r)\, :\, r\leq s\}$ as $W(t)-W(s)$ is independent of $\calF_s$. We call the control  $v$ an \textit{admissible} control and the set of all admissible controls is denoted
by $\Uadm$. We show in Theorem~\ref{Thm-existence} in Appendix that if
Conditions~\ref{cond1} and \ref{cond2} hold then for every $v\in\Uadm$, \eqref{1} has a unique adapted strong solution $(X, Y)$ taking values
in $\calC([0, \infty) :\Rd_+)\times\calCup_0([0, \infty) : \Rd_+)$.

It is well known that for every non-decreasing continuous function $\phi :\R_+\to\R_+$, we can define a $\sigma$-finite measure $d\phi$ on $([0, \infty), \calB([0, \infty)))$ where $d\phi((a, b])=\phi(b)-\phi(a)$ for
$0\leq a\leq b<\infty$.  For any $f\in L^1([0, \infty), \calB([0, \infty)), d\phi)$ we denote $\int_{[0, t]}f(s) d\phi(s)= \int_0^t f(s)d\phi(s)$.
Let $\ell$ be a non-negative continuous function with polynomial growth, i.e., there exists $m, c_\ell\in[0, \infty)$ such that
\begin{equation}\label{2}
0\; \leq\; \ell(x)\; \leq\; c_\ell\; (1+\abs{x}^m), \qquad x\in\Rd_+.
\end{equation}
Let $h_i:\M\to\R, i\in\calI$, be continuous. Since $\M$ is compact, $h_i, i\in\calI$, are bounded. Note that $h_i$ can as well
be negative. Then for any initial data $x\in\Rd_+$ and control $v\in\Uadm$, our cost function is given by
\begin{equation}\label{2.5}
 J(x, v)\;\df \;\Exp_x\Big[\int_0^\infty e^{-\beta s}\bigl(\ell(X(s))\, ds +
 \sum_{i\in\calI} h_i(v(s))\, dY_i(s)\bigr)\Big],
 \end{equation}
where $\beta>0$, and $(X, Y)$ is the unique adapted solution to \eqref{1}.
We define the value function as
\begin{equation}\label{3}
V(x) \, \df\, \inf_{v\in\Uadm} J(x, v).
\end{equation}
It is shown in Lemma~\ref{lem-growth} below that $V(x)$ is finite for all 
$x\in\Rd_+$.
The goal of this article is to characterize the value function $V$ as a viscosity solution of a suitable Hamilton-Jacobi-Bellman (HJB) equation.
To define the HJB equation let us first introduce the following notations,
\begin{align*}
\calL  \df \frac{1}{2}\Tr (A D^2) + b \cdot D\, ,
\end{align*}
where $A=\Sigma\Sigma^T$ and $D$ and $D^2$ are the gradient 
and Hessian operators, respectively.  
Define for $p\in\Rd$,
$$\calH_i(p)\, \df\, \min_{M\in\M}\{p\cdot M e_i + h_i(M)\}\, , \quad i\in\calI.$$
For $x\in\Rd_+$ we define $\I(x)\df\{i\in\calI \; :\; x_i=0\}$. 
Therefore $\I(x)$ denotes the indices of faces $F_i:=\{x\in\Rd_+\newmid 
x_i=0\}$ where $x$ lies.
Note that $\I(x)$ can also be empty. 
Indeed, for $x\in\R^d_+$, 
\[
\I(x)\begin{cases}
=\emptyset &
\text{ if } \ x\in\left(\R^d_+\right)^\circ,\\[3pt]
\not=\emptyset&\text{ if }\ x\in\partial \R^d_+. 
\end{cases}
\]
The HJB equation we are interested in is given as follows
\begin{equation}\label{4}
\left\{\begin{split}
\calL V -\beta V + \ell &= 0\quad  \text{in}\; \; \left(\Rd_+\right)^\circ,
\\
\calH_i (DV(x)) &= 0 \quad\text{ for }  x\in 
\partial\R^d_+,\ i\in I(x),
\end{split}
\right.
\end{equation}
The  solution of \eqref{4} is defined in the viscosity sense 
(\hspace{1sp}\cite{crandall-ishii-lions,dupuis-ishii-90,dupuis-ishii}) as follows:
\begin{definition}[Viscosity solution]\label{defi-vis}
Let $V\, :\, \Rd_+\to\R$ be a continuous function. Then $V$ is said to be a sub (super) solution of \eqref{4},
whenever $\varphi\in \calC^2(\Rd)$ and $V-\varphi$ has a local maximum (minimum) at $x\in\Rd_+$, the following
hold:
\begin{align*}
(\calL \varphi(x)-\beta V(x) + \ell(x))\vee \max_{i\in\I(x)}\calH_i(D\varphi(x)) &\geq 0\, ,
\\
\Big((\calL \varphi(x)-\beta V(x) + \ell(x))\wedge \min_{i\in\I(x)}\calH_i(D\varphi(x)) &\leq 0, \ \ \text{respectively}\Big).
\end{align*}
\end{definition}
We will impose the following condition on $h_i$.
\begin{condition}\label{cond3}
 For any $p\in\Rd$ and $x\in\partial\Rd_+$ if we have $\max_{i\in\I(x)}\calH_i(p)<\eta$ for some
$\eta\in(-\infty, 0)$ then
$$\cap_{i\in\I(x)}\{M\in\M\; : \; p\cdot M e_i+ h_i(M)<\eta/2\}\neq\emptyset.$$
\end{condition}
\begin{remark}
Condition~\ref{cond3} is assumed purely for technical reasons. This condition will be used to show that $V$ in
\eqref{3} is a subsolution to \eqref{4}. It is easy to see that Condition~\ref{cond3} is satisfied if
$h_i(M)\equiv h_i(M_{1i}, \ldots, M_{di})$.
\end{remark}

Our main result characterizes the value function $V$  given by \eqref{3} as the viscosity solution of \eqref{4}.
Recall that $\calC_{\mathrm{pol}}(\Rd_+)$ denotes the set of all continuous functions, defined on $\Rd_+$, with polynomial growth.
Our main result is the following.

\begin{theorem}\label{Thm-main}
Let Conditions \ref{cond1}, \ref{cond2} and \ref{cond3} hold. Then the following holds.
\begin{itemize}
\item[{(a)}] The value function $V$ given by \eqref{3} is a viscosity solution
to the HJB given  by \eqref{4};
\item[{(b)}] Assume that  $h_i(M)= c\cdot Me_i$ for some constant vector $c=(c_1,\ldots,c_d)$. Then $V$ is the unique
viscosity solution to \eqref{4} in the class of $\calC_{\mathrm{pol}}(\R^d_+)$.
\end{itemize}
\end{theorem}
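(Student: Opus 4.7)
The plan is to handle (a) via classical dynamic programming and (b) via a Crandall--Ishii comparison principle, with the construction of a suitable test function at boundary and corner points as the main technical obstacle.

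For (a), the first step is to verify $V\in\calC_{\mathrm{pol}}(\R^d_+)$: continuity follows from the Lipschitz dependence of the reflected SDE \eqref{1} on its initial condition (implicit in the proof of Theorem~\ref{Thm-existence}), while polynomial growth follows from the bound \eqref{2} on $\ell$, boundedness of $h_i$, and standard $L^p$ estimates on $(X,Y)$ that are uniform in $v\in\Uadm$. The second step is the dynamic-programming principle
\[
V(x)=\inf_{v\in\Uadm}\Exp_x\Bigl[\int_0^\tau e^{-\beta s}\ell(X(s))\,ds+\sum_{i\in\calI}\int_0^\tau e^{-\beta s}h_i(v(s))\,dY_i(s)+e^{-\beta\tau}V(X(\tau))\Bigr]
\]
for every $\{\calF_t\}$-stopping time $\tau$. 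The supersolution property at a local minimum of $V-\varphi$ then follows from an $\eps$-optimal control argument together with It\^o's formula applied to $e^{-\beta s}\varphi(X(s))$ on $[0,h\wedge\tau_{\partial B_r(x)}]$: if both $\calL\varphi(x)-\beta V(x)+\ell(x)>0$ and $\min_{i\in\I(x)}\calH_i(D\varphi(x))>0$ held, the right-hand side of DPP would strictly exceed $V(x)+\eps$ for small $h$. The subsolution property is the harder half and is where Condition~\ref{cond3} is used: arguing by contradiction at a local maximum of $V-\varphi$, if $\calL\varphi(x)-\beta V(x)+\ell(x)<0$ and $\max_{i\in\I(x)}\calH_i(D\varphi(x))<\eta<0$, the condition produces a single $M^*\in\M$ with $D\varphi(x)\cdot M^* e_i+h_i(M^*)<\eta/2$ for every $i\in\I(x)$; testing DPP with the constant control $v\equiv M^*$ and observing that $dY_i\equiv 0$ near $x$ for $i\notin\I(x)$, It\^o gives $0\leq$ (strictly negative quantity of order $h$), the desired contradiction.

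For (b), I would prove a comparison principle: any sub- and supersolutions $U,W\in\calC_{\mathrm{pol}}(\R^d_+)$ of \eqref{4} satisfy $U\leq W$; uniqueness then follows from (a) since $V$ is both. The method is the Crandall--Ishii doubling-variables technique. Assuming $\sup(U-W)>0$, consider
\[
\Phi_{\eps,\gam}(x,y)=U(x)-W(y)-\tfrac{|x-y|^2}{2\eps}-\gam\bigl(1+|x|^2+|y|^2\bigr)^{k/2}-\chi(x,y),
\]
with $k$ exceeding the polynomial growth exponent of $U,W$ (so that the maximum is attained at some $(\hat x,\hat y)$ in a bounded region) and $\chi\in\calC^2(\R^d\times\R^d)$ an auxiliary test function chosen to strictly satisfy the boundary condition at the maximum point. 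The role of $\chi$ is to rule out the boundary alternative in both the subsolution and supersolution definitions, forcing the interior PDE inequalities to be active at $(\hat x,\hat y)$. Ishii's lemma then yields second-order jets at $(\hat x,\hat y)$, and subtracting the PDE inequalities for $U$ and $W$ produces $\beta(U(\hat x)-W(\hat y))\leq$ (error vanishing as $\eps,\gam\to 0$), contradicting $U(\hat x)-W(\hat y)\geq\sup(U-W)>0$ in the limit.

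The main obstacle is the construction of $\chi$: at corner points $x$ with $|\I(x)|\geq 2$, its gradient must simultaneously satisfy several nonlinear inequalities of the form $\calH_i(\cdot)>0$ coming from distinct faces $\{x_i=0\}$, while remaining compatible with the quadratic penalty $|x-y|^2/(2\eps)$. Under the linear hypothesis $h_i(M)=c\cdot Me_i$, a direct computation gives
\[
\calH_i(p)=(p+c)_i-\alpha_i\bigl(\max_{j\neq i}(p+c)_j\bigr)^+,
\]
so that the constant direction $p=\Ir-c$ yields $\calH_i(\Ir-c)=1-\alpha_i$, which by Condition~\ref{cond2} is bounded below by a positive constant uniformly in $i$. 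This explicit slack in the boundary condition, unavailable for general concave $h_i$, allows one to build the required $\chi$ from a smoothed affine function in the direction $\Ir-c$ on each variable, together with growth corrections to handle the cross term $|x-y|^2/\eps$; the detailed construction is carried out in Section~\ref{const-tf}.
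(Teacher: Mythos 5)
Part (a) of your proposal follows essentially the same route as the paper: establish polynomial growth and continuity of $V$, prove a dynamic programming principle, obtain a small-time exit-probability estimate (the paper's Lemma~\ref{lem-small}), and then combine It\^o's formula with the DPP to reach a contradiction; the extraction of a single $M^*\in\M$ from Condition~\ref{cond3} and the use of the constant control $v\equiv M^*$ for the subsolution inequality is exactly what Theorem~\ref{Thm-viscosity} does. The only cosmetic difference is that the paper proves the DPP at the exit-time truncation $\sigma_r\wedge t$ rather than at arbitrary stopping times; that is all the argument needs.

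For part (b), however, the penalty you propose would not work, and recognizing this is the main point of the paper. You double variables with $\frac{|x-y|^2}{2\eps}$ and add a smooth auxiliary $\chi$ whose gradient sits in the favorable direction $\Ir-c$ (you correctly compute $\calH_i(\Ir-c)=1-\alpha_i>0$). But at a maximum point $(\hat x,\hat y)$ with $\hat y_i=0$, to rule out the boundary alternative for the supersolution you must show
\[
\calH_i\Bigl(\tfrac{\hat x-\hat y}{\eps}+\text{lower order}\Bigr)>0.
\]
Writing $z=\hat x-\hat y$ (so $z_i\geq 0$), one has $\calH_i(z/\eps)=\frac{z_i}{\eps}-\alpha_i\max_{j\neq i}\frac{z_j^+}{\eps}$, which can be \emph{arbitrarily negative} whenever some off-component $z_j$ is much larger than $z_i$. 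Since the standard doubling estimate gives $|z|\sim\sqrt\eps$, the offending term is of order $\eps^{-1/2}$ and blows up as $\eps\to 0$; no auxiliary $\chi$ with bounded (or even $o(\eps^{-1/2})$) gradient can compensate. In other words, $\calH_i(D_x\tfrac{|x-y|^2}{2\eps})$ does \emph{not} have the sign you need when $z_i\geq 0$, and this defect cannot be repaired by a bounded correction. This is exactly what Theorem~\ref{test-f} is for: the paper does not perturb $|x-y|^2$, it \emph{replaces} it by $k\ph(x-y)$ with $\ph=\rho^2$, where $\rho$ is the gauge of a purpose-built convex body $\calS_\del^\eps$. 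The body is constructed (Lemmas~\ref{lem-A7}--\ref{lem-12}) so that $D\ph(z)\in N(z/\rho(z),\calS_\del^\eps)$ automatically satisfies $\calH_i(D\ph(z))\geq 0$ when $z_i\geq 0$ and $\leq 0$ when $z_i\leq 0$, for all $z$, regardless of the size of the other components. The strict boundary inequalities $\pm\eps(1-\alpha_i)$ are then supplied not by a $\chi$ in the doubling variable but by perturbing $u,v$ directly by $\mp\eps(C+\psi)$ with $\psi(x)=(m+1)^{-1}\sum_i x_i^{m+1}+\Ir\cdot x$, which simultaneously provides the coercivity at infinity that your $\gam(1+|x|^2+|y|^2)^{k/2}$ term was meant to give. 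So your outline correctly identifies the goal (a comparison principle via doubling plus a boundary-compatible test function), but misidentifies the mechanism: for this $\calH_i$ the oblique structure must be built into the doubling penalty itself, and that requires the nontrivial geometric construction of Section~\ref{const-tf}, not an affine correction of the Euclidean square.
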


Proof of Theorem~\ref{Thm-main}(a) is done in Lemma~\ref{lem-growth}, Lemma~\ref{lem-cont}
and Theorem~\ref{Thm-viscosity}. These results are proved in Section~\ref{proofs(a)}. Let us mention again 
that the uniqueness of viscosity solution does not follow from existing literature.
We construct an appropriate test function, as stated in Theorem \ref{test-f}, in Section~\ref{const-tf}, 
which leads us to the uniqueness result.

\subsection{Motivation from queueing network}\label{motive}
The motivation of the above mentioned problem comes from a control problem in multi-class queueing network.
Consider a queueing system with $d$ customer classes where the arrivals of the customers are given by $d$ independent
Poisson processes $\{E^1, \ldots, E^d\}$. There are $d$ servers and  every customer class has service priority in one of the servers. Therefore we can label the servers from $1$ to $d$ so that customer of class $i$ has priority for service in server $i$ for $i\in\calI$. A customer of class $j, \, j\neq i,$ may access the server $i$ if there are
no customers of class $i$ present in the system. We assume that the network is working under
a preemptive scheduling, i.e., service of a non-priority class customer is preempted by a
priority class customer.
Such queueing systems might be referred to as \emph{queueing networks with help}.
Similar type queueing network is studied in \cite{biswas}
for a M/M/N+M queueing network with ergodic type costs.
The service time distributions of the customers
at different servers are assumed to be exponential. If we denote by $X^i_t$, the number of class $i$ customers in the system at time $t$, then we have the following balance
equation
\begin{equation}\label{E01}
X^i_t\; =\; X^i_0 + E^i_t - S^i(\mu^i \int_0^t B^i_s\, d{s}) - \sum_{j:j\neq i} S^{ij}(\mu^{ij}\int_0^t B^{ij}_s\, d{I^j_s}), \quad i\in \calI. 
\end{equation}
In \eqref{E01}, $\{S^i, S^{ij}, i, j\in\calI\}$ are independent standard Poisson process, $\mu^{ij}$ ($\mu^i$) denotes the service rate of the class $i$ (class $i$)
customers at the server  $j, j\neq i$ ($i$, respectively). $B^{ij}_s$ ($B^i_s$) is an adapted process which denotes the status of service of class $i$ customers at server
$j$ ( $i$, respectively) at time $s$. $B^{ij} =0$ implies that class $i$ customer is not receiving service from server $j$ whereas $B^{ij}\in(0, 1]$ implies that the
class $i$ customer is being served at server $j$ with fraction of effort $B^{ij}$. Therefore we have
$$0\; \leq \; \sum_{i: i\neq j} B^{ij}_s \;\leq\; 1.$$
Due to priority it is easy to see that $B^i\in\{0, 1\}$. $I^i$ denote the cumulative time when there were no customers of class $i$ present in the system, i.e.,
$$I^i_t\; \df\; \int_0^t (1-B^i_s)\, d{s}, \quad i\in \calI.$$ 
Therefore $I^i$ is an increasing process and integrations in \eqref{E01} make sense. We assume that every class is in heavy traffic in the sense that the
arrival rate of class $i$ customers is equal to $\mu^i$. Then we would have $\frac{I^i_{nt}}{n}\to 0$ as $n\to \infty$, and $t\geq 0$.
We define the diffusion scaling as follows.
\begin{equation*}
\begin{gathered}
\Hat{E}^i_t\; \df\; \frac{1}{\sqrt{n}}(E^i_{nt}-\mu^i \, n t), \quad \Hat{S}^i(t) \; \df\; \frac{1}{\sqrt{n}} (S^i(nt) - n t),\quad \Hat{S}^{ij}(t) \; \df\; \frac{1}{\sqrt{n}} (S^{ij}(nt) - n t),
\\[2mm]
\Hat{X}^i_t \; \df\; \frac{1}{\sqrt{n}} X^i_{nt}, \quad \Hat{I}^i_t \; \df\; \frac{1}{\sqrt{n}} I^i_{nt}, \quad \bar{I}^i_t \; \df\; \frac{1}{n} I^i_{nt}.
\end{gathered}
\end{equation*}
Hence using \eqref{E01} and a simple calculation we obtain
\begin{align}\label{E02}
\Hat{X}^i_t &=\; \Hat{X}^i_0 + \Hat{E}^i_t - \Hat{S}^i(\mu^i \int_o^t B^i_{ns}\, d{s}) - \sum_{j: j\neq i} \Hat{S}^{ij}(\mu^{ij}\int_0^t B^{ij}_{ns}\, d{\bar{I}^j_s})\nonumber
\\[2mm]
&\; \qquad + \mu^i \Hat{I}^i_t - \sum_{j: j\neq i} \mu^{ij}\, \int_{0}^t B^{ij}_{ns} \, d{\Hat{I}^j_s},
\end{align}
for $i\in\calI$. We also have
$$\int_0^t \Hat{X}^i_s \, d{\Hat{I}^i_s}\; =\; 0 \quad \text{for all}\; 
i\in\calI, \; t\geq 0.$$
Now formally letting $n\to \infty$, in \eqref{E02} we have: $(\Hat{X}, \Hat{I})\to (Z, \Hat{Z})$ and
\begin{equation}\label{E03}
\begin{split}
Z^i_t &= Z^i_0 + W^{i, 1}_t -W^{i, 2} + \mu^i \Hat{Z}^i - \sum_{j: j\neq i} \mu^{ij}\, \int_0^t v_{ij}(s) \, d{\Hat{Z}^j_s},
\\
\int_0^t Z^i_s \, d{\Hat{Z}^i_s} & =0,\quad \forall \; t\geq 0,
\end{split}
\end{equation}
where $\{W^{i,1}, W^{i, 2}, i\geq 1\}$ are collection of independent Brownian motion with suitable variance. The relation between \eqref{E02} and \eqref{E03} is quite formal, in fact, the convergence might not hold for any choice of control $B^{ij}_n$. However, this relation can be justified for a reasonable class of controls.
Now if we redefine $\Hat{Z}^i$ as $\mu^i \Hat{Z}^i$ then
from \eqref{E03} we get
\begin{equation}\label{E04}
\begin{split}
Z^i_t &= Z^i_0 + W^i + \Hat{Z}^i - \sum_{j: j\neq i}  \int_0^t \frac{\mu^{ij}}{\mu^j}\, v_{ij}(s) \, d{\Hat{Z}^j_s},
\\
\int_0^\cdot Z^i_s \, d{\Hat{Z}^i_s} & =0,\quad \forall \; t\geq 0,
\end{split}
\end{equation}
for some Brownian motion $(W^1, \ldots, W^d)$ with suitable covariance matrix and $v$ takes values in $\M(\Ir)$.
 Assume that $\frac{\mu^{ij}}{\mu^j}< 1$ for all $i, j$. Then it is easy to see that
\eqref{E04} is a particular case of \eqref{1} and Condition~\ref{cond2} is also satisfied.

We can associate the following cost structure to the above queueing system.
$$V(\Hat{X}_0)\;\df\; \inf\, \Exp
\Bigl[\int_0^\infty e^{-\beta s} \, \bigl(\ell(\Hat{X}_s)\, ds + \sum_{i\in\calI} c_i\, d\Hat{I}^i_s\bigr)\Bigr],$$
for some non-negative constants $c_i, i\in\calI$, where the infimum is taken over all adapted process $B$. We note that if we let $n\to\infty$ formally then the above
cost structure is a particular form of \eqref{3}. Cost structure of above type has been used to find optimal control for various queueing networks (see for example 
\cite{atar-budhiraja} and references therein). The importance of such control problems comes from well studied Generalized Processor Sharing (GPS) networks 
\cite{ramanan-reiman-03,ramanan-reiman-08}.
In a single server network working under GPS scheduling in heavy-traffic, the server spends a positive fraction of effort (corresponding to their traffic intensities) to serve each customer class when all classes have backlog of work, otherwise the service capacity is split among the existing classes in a certain proportion. It is easy to see that our model considers a full-fledged optimal control version of the GPS models. 

\begin{remark}
The above relation between the queuing control problem and the value function defined in \eqref{3}
is bit formal. We neither study the convergence of the value functions corresponding
to the queuing model, nor try to find any asymptotic optimality result. These are topics of further
study. The main goal of this article is to characterize the value function \eqref{3} in terms of 
the solution of HJB \eqref{4}. However, in the many server settings above questions are addressed in   \cite{biswas}. The limiting HJB in \cite{biswas} has a classical solution which 
is used to obtain an optimal control for the limiting problem. This optimal control is then  used to obtain convergence result of the value functions for the queuing model and asymptotic optimality. It should be noted that the solution of \eqref{4} is obtained in viscosity sense which gives additional difficulty in establishing convergence results of value functions.
\end{remark}

\section{Proof of Theorem ~\ref{Thm-main}(a)}\label{proofs(a)}
Conditions~\ref{cond1} and \ref{cond2} will be in full effect in the remaining part of the article.
We start by proving the growth estimate of $V$ given by \eqref{3}.
\begin{lemma}\label{lem-growth}
Let $V$ be given by \eqref{3}. Then there exist $c_1\in (0, \infty)$ such that
$$|V(x)|\; \leq\; c_1(1+\abs{x}^m), \quad x\in\Rd_+.$$
\end{lemma}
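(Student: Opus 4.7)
The goal is to establish $|V(x)|\le c_1(1+|x|^m)$. I would bound $V$ from above by $J(x,v_0)$ for a convenient constant admissible control (take $v_0\equiv \mathbb{I}$, the classical normal reflection), and from below by exploiting $\ell\ge 0$ together with the uniform boundedness of each $h_i$ on the compact set $\M$. Both bounds reduce to two a priori estimates, each required uniformly in $v\in\Uadm$: (i) a polynomial moment estimate $\Exp_x|X(s)|^m\le C(1+|x|^m+s^m)$, and (ii) a polynomial bound $\Exp_x\big[\int_0^\infty e^{-\beta s}\,dY_i(s)\big]\le C(1+|x|)$ on the total discounted expected reflection.

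\textbf{Moment estimate for $X$.} The key pathwise observation is that, writing $v=\mathbb{I}-P\in\M$ and using the complementarity $\int_0^t X_i(s)\,dY_i(s)=0$ together with $P_{ij},X_i,\,dY_j\ge 0$,
\begin{equation*}
\int_0^t X(s)\transp v(s)\,dY(s)\,=\,-\int_0^t\sum_{i\ne j}X_i(s)P_{ij}(s)\,dY_j(s)\,\le\,0\quad\text{a.s.}
\end{equation*}
This yields a Lyapunov property for $|X|^{2k}$: It\^o's formula applied to $|X(t)|^{2k}$ (with $2k\ge m$) produces a reflection contribution proportional to the above which can simply be discarded, and what remains is controlled by the bounded drift and diffusion from Condition~\ref{cond1}. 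A standard Gronwall argument then yields the moment bound, uniformly in $v\in\Uadm$.

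\textbf{Estimate on the expected reflection.} Applying It\^o to $e^{-\beta t}\Ir\transp X(t)$, taking expectations, and rearranging gives
\begin{equation*}
\begin{split}
\Exp_x\Big[\int_0^T e^{-\beta s}\Ir\transp v(s)\,dY(s)\Big] &= \Exp_x[e^{-\beta T}\Ir\transp X(T)]-\Ir\transp x\\
&\quad+\beta\,\Exp_x\Big[\int_0^T e^{-\beta s}\Ir\transp X(s)\,ds\Big]-\Exp_x\Big[\int_0^T e^{-\beta s}\Ir\transp b(X(s))\,ds\Big].
\end{split}
\end{equation*}
The crucial structural inequality is that for $v=\mathbb{I}-P\in\M$, the $j$-th entry of the row vector $\Ir\transp v$ equals $1-\sum_{i\ne j}P_{ij}\ge 1-\alpha_j\ge c_0\df 1-\max_i\alpha_i>0$, positive by Condition~\ref{cond2}. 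Thus $\Ir\transp v(s)\,dY(s)\ge c_0\sum_j dY_j(s)$, and combined with the moment bound above this delivers $c_0\,\Exp_x\big[\int_0^\infty e^{-\beta s}\sum_j dY_j(s)\big]\le C(1+|x|)$ uniformly in $v$ upon letting $T\to\infty$.

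\textbf{Conclusion and main obstacle.} For the upper bound, with $v_0\equiv\mathbb{I}$, the growth assumption \eqref{2} and the two estimates combine to give $J(x,v_0)\le C(1+|x|^m)$. For the lower bound, since $\ell\ge 0$, one has $J(x,v)\ge -\max_i\sup_{M\in\M}|h_i(M)|\cdot\Exp_x\big[\int_0^\infty e^{-\beta s}\sum_j dY_j(s)\big]\ge -C(1+|x|)\ge -C(1+|x|^m)$ for every $v\in\Uadm$. Taking infimum over $v$ yields the claim. The main difficulty is obtaining the moment estimate (i) uniformly in the admissible control, since the reflection term $v(s)\,dY(s)$ is itself driven by $X$; the pathwise sign observation $X\transp v(s)\,dY(s)\le 0$ is exactly what allows us to discard this contribution from the It\^o expansion and reduce to a standard moment bound for the unreflected SDE.
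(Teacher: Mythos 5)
Your approach is correct and takes a genuinely different route from the paper. The paper works pathwise: it sums the components of the SDE, invokes the explicit Skorokhod-map estimate \eqref{a-3} (proved en route in Theorem~\ref{Thm-existence}) to dominate $\sum_i Y_i(t)$ by $t$ and $\sum_i\sup_{s\le t}|W_i(s)|$, and then applies the Burkholder--Davis--Gundy inequality together with an integration by parts (justified via \eqref{a-3} and the law of the iterated logarithm) to control the $h_i\,dY_i$ term. You instead run a stochastic Lyapunov argument: the complementarity $X_i\,dY_i=0$ together with $P\geq 0$ gives the pathwise sign $X\transp v\,dY=-\sum_{i\neq j}X_iP_{ij}\,dY_j\leq 0$ (and the same for $|X|^{2k-2}X\transp v\,dY$), so the reflection contribution in the It\^o expansion of $|X|^{2k}$ can be discarded, yielding a uniform moment bound; and you apply It\^o to $e^{-\beta t}\Ir\transp X(t)$ together with the entrywise lower bound $\Ir\transp v\geq(1-\max_i\al_i)\Ir\transp$ (precisely the quantity that Condition~\ref{cond2} makes positive, and the same algebraic fact that underlies \eqref{a-3}) to bound the expected discounted reflection. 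Both are sound; the paper's route is more economical given that \eqref{a-3} is already in hand, while yours is more self-contained and portable since it does not lean on the explicit Skorokhod estimate.

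Two small technical remarks. First, the claim $\Exp_x|X(s)|^m\leq C(1+|x|^m+s^m)$ does not follow from the ``standard'' linear Gronwall you invoke: after discarding the reflection, the drift contribution scales like $|X|^{2k-1}$, and a naive Young-inequality absorption into $|X|^{2k}$ produces $e^{\eps s}$ growth. You either need the Bernoulli-type differential inequality $f'\leq C(1+f^{1-1/2k})$ with $f(s)=\Exp|X(s)|^{2k}$ (which does give the polynomial rate), or you should settle for $e^{\eps s}$ with $\eps<\beta$ and observe that this suffices for the discounted integral; or, cleanest, apply It\^o to $e^{-\beta t}|X(t)|^{2k}$ directly and rearrange, which yields $\Exp\int_0^\infty e^{-\beta t}|X(t)|^{2k}\,dt\leq C(1+|x|^{2k})$ at once. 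Second, one should localize before taking expectations so that the stochastic integral is an honest martingale, letting the localization radius tend to infinity afterwards; this is routine but worth stating.
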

 
\begin{proof}
Fix $x\in\Rd_+$ and $v\in\Uadm$. Let $(X, Y)$ be the solution of \eqref{1} given the control $v$ and initial data $x$. Then we have
\begin{equation}\label{6}
X_i(t) = x_i + \int_0^t b_i(X(s))\,ds +\sum_{j=1}^d \Sigma_{ij} W_j(t)
+ \int_0^t\sum_{j=1}^d v_{ij}(s)\, dY_j(s), \quad t\geq 0.
\end{equation}
Therefore summing over $i$ we have from \eqref{6} that
\begin{equation}\label{7}
\sum_{i\in\calI} X_i(t) = \sum_{i\in\calI} x_i + \int_0^t \sum_{i\in\calI} b_i(X(s))\, ds +
\sum_{j=1}^d \Big(\sum_{i\in\calI}\Sigma_{ij}\Big) W_j(t) + \sum_{j=1}^d\Big[\int_0^t\sum_{i\in\calI} v_{ij}(s)\, dY_j(s) \Big]
\end{equation}
Since $v$ takes values in $\M$ we have $\sum_{i}v_{ij}\leq 1$, thus using Condition~\ref{cond1}
and \eqref{7} we get a constant $\kap_2>0$ such that
\begin{align*}
\sum_{i\in\calI}X_i(t)\leq \sum_{i\in\calI} x_i + \kap_2\Big[t+\sum_{i\in\calI} \bigl(|W_i(t)| + Y_i(t)\bigr)\Big].
\end{align*}
Now use \eqref{a-3} to obtain  
\begin{equation}\label{8}
\sum_{i\in\calI}X_i(t)\leq \sum_{i\in\calI} x_i + 
\kap_3\Bigl(t + \sum_{i\in\calI} \sup_{0\leq s\leq t}|W_i(s)|\Bigr),
\end{equation}
for some constant $\kap_3$. Recall that for any $a_i\geq 0, i=1,\ldots, d+2,$ and $m\in[0, \infty)$ we have
$$\Bigl(\sum_{i=1}^{d+2} a_i\Bigr)^m\leq (d+2)^{(m-1)^+} \sum_{i=1}^{d+2}a_i^m.$$
Since $X\geq 0$ we have from \eqref{2} and \eqref{8} that
\begin{align}\label{9}
\ell(X(t)) &\leq c_\ell + c_\ell\Bigl[\sqrt{d}\abs{x}+\kap_3 \bigl(t + \sum_{i\in\calI} \sup_{0\leq s\leq t}|W_i(s)|\bigr)\Bigr]^m\nonumber
\\
&\leq c_\ell + c_\ell (d+2)^{(m-1)^+} d^{m/2}\abs{x}^m
+ c_\ell (d+2)^{(m-1)^+}\kap_3 \Bigl(t^m + \sum_{i\in\calI} \sup_{0\leq s\leq t}|W_i(s)|^m\Bigr)\nonumber
\\
&\leq  c_\ell (d+2)^{(m-1)^+} d^{m/2}(1+ \abs{x}^m)
+ c_\ell (d+2)^{(m-1)^+}\kap_3 \Bigl(t^m + \sum_{i\in\calI} \sup_{0\leq s\leq t}|W_i(s)|^m\Bigr).
\end{align}
Now we can find constants $\kap_i, i=4, 5, 6$, depending only on $m$, so that
\begin{align*}
\Exp_x \Bigl[\int_0^\infty e^{-\beta t}(t^m+ \sum_{i\in\calI} \sup_{0\leq s\leq t}|W_i(s)|^m)dt\Bigr]
&\leq \kap_4\Big(1+ \int_0^\infty e^{-\beta t}\Exp\bigl[\sup_{0\leq s\leq t}|W_1(s)|^m\bigr]\Big)ds
\\
&\leq \kap_5\Big(1+ \int_0^\infty e^{-\beta t}t^{m/2}\Big)ds
\\
& \leq \kap_6,
\end{align*}
where in the second inequality we use Burkholder-Davis-Gundy inequality \cite{kallenberg}. Therefore combining the
above display with \eqref{9} we obtain
\begin{equation}\label{10}
\Exp \Bigl[\int_0^\infty e^{-\beta t}\ell(X(t))dt\Bigr]\leq\kap_7(1 + \abs{x}^m),
\end{equation}
for some positive constant $\kap_7$. Let $\abs{h_i}_\infty\df \sup_{M\in\M}|h_i(M)|$ and $\abs{b}_1=\sum_i\sup_{x\in\Rd_+}|b_i(x)|$.
Then for each $i\in\{1, \ldots, d\}$,
\begin{align}\label{11}
\Big|\Exp_x \Bigl[\int_0^\infty e^{-\beta t}h_i(v(t))dY_i(t)\Bigr]\Big| & \leq
\abs{h_i}_\infty\Big|\Exp_x \Bigl[\int_0^\infty e^{-\beta t}dY_i(t)\Bigr]\Big|\nonumber
\\
&= \abs{h_i}_\infty\, \Big|\Exp_x \Bigl[\int_0^\infty \beta e^{-\beta t}Y_i(t)dt\Bigr]\Big|\nonumber
\\
&\leq \kap_8 \frac{\abs{h_i}_\infty}{1-\max_i\al_i} \,
\Big|\Exp \Bigl[\int_0^\infty \beta e^{-\beta t}(\abs{b}_1t+\sum_{i\in\calI}\sup_{0\leq s\leq t}|W_i(s)|)dt\Bigr]\Big|\nonumber
\\
&\leq \kap_9,
\end{align}
for some constants $\kap_8, \kap_9$, where the second equality is obtained by a use of a integration-by-parts formula
(this is justified by \eqref{a-3} and Law of Iterated Logarithm which confirms that maximum of a standard Brownian motion has polynomial growth almost surely), in the
third inequality we use \eqref{a-3}, and last inequality can be obtained following similar estimate as above (display above \eqref{10}).
We also note that the constants above do not depend on $i$ and $v\in\Uadm$. Therefore we complete the proof
combining \eqref{10} and \eqref{11}.
\end{proof}

Next lemma establishes continuity of the value function $V$.
\begin{lemma}\label{lem-cont}
The value function $V$ given by \eqref{3} is continuous in $\Rd_+$. In particular, $V\in\calC_{\mathrm{pol}}(\Rd_+)$.
\end{lemma}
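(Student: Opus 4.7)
The plan is to show continuity of $x\mapsto J(x,v)$ uniformly in $v\in\Uadm$; then continuity of $V=\inf_v J(\cdot,v)$ is immediate, and the polynomial-growth conclusion $V\in\calC_{\mathrm{pol}}(\R^d_+)$ follows from Lemma~\ref{lem-growth}.

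First I would establish a pathwise stability estimate. Fix $x,x'\in\R^d_+$ and $v\in\Uadm$, and let $(X,Y)$ and $(X',Y')$ solve \eqref{1} from $x$ and $x'$ respectively, driven by the same Brownian motion and the same control. Using the Lipschitz property of the Skorokhod map with reflection matrices in the compact set $\M$ (uniform in $v$ by compactness of $\M$ and Condition~\ref{cond2}) together with Gronwall applied to the drift difference via Condition~\ref{cond1}, I expect
\[
\Exp\Bigl[\sup_{0\le s\le T}\bigl(|X(s)-X'(s)|^p+|Y(s)-Y'(s)|^p\bigr)\Bigr]\;\le\; C_{T,p}\,|x-x'|^p,\qquad T>0,\ p\ge 1,
\]
with $C_{T,p}$ independent of $v$. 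This is essentially contained in the arguments underlying Theorem~\ref{Thm-existence}.

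Next, decompose $J(x,v)-J(x',v)$ into its running-cost and reflection-cost parts. For the running-cost difference $\Exp\int_0^\infty e^{-\beta s}(\ell(X)-\ell(X'))\,ds$, fix $\eps>0$ and truncate at some large $T$: the tail on $[T,\infty)$ is smaller than $\eps/2$ using the polynomial bound \eqref{10}. On $[0,T]$, introduce the stopping time $\tau_R=\inf\{s:|X(s)|\vee|X'(s)|\ge R\}$ to restrict to bounded trajectories outside an event of small probability (controlled again by the polynomial estimates of Lemma~\ref{lem-growth}'s proof). Uniform continuity of $\ell$ on the ball of radius $R$, combined with the pathwise Lipschitz estimate above, then yields a modulus $\omega_\ell(|x-x'|)\to 0$ uniform in $v$.

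The main obstacle is the reflection-cost term $\sum_i\Exp\int_0^\infty e^{-\beta s}h_i(v(s))\,d(Y_i-Y'_i)(s)$. Since $s\mapsto h_i(v(s))$ has no a priori bounded variation---the control $v$ is only progressively measurable---one cannot naively integrate by parts against the BV process $Y_i-Y'_i$, and the total-variation bound $|h_i|_\infty\,\Exp\int e^{-\beta s}\,d(Y_i+Y'_i)$ does not vanish with $|x-x'|$. The plan is first to truncate at large $T$ using exponential decay against polynomial growth of $Y_i(T)$ in expectation (analogous to \eqref{11}). Then on $[0,T]$, approximate $v$ by piecewise-constant-in-time controls $v^{(n)}\in\Uadm$ (admissible because $\M$ is compact convex and progressive measurability is preserved), for which $s\mapsto h_i(v^{(n)}(s))$ is of bounded variation. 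For each $v^{(n)}$, integration by parts converts the $d(Y_i-Y'_i)$-integral into boundary terms plus an integral against $(Y_i-Y'_i)$ itself, which is controlled in sup-norm by $C_T|x-x'|$. The hardest step is the passage to the limit $n\to\infty$: one must choose the discretization scale in terms of $|x-x'|$ so that the growing variation of $h_i(v^{(n)})$ is compensated by the shrinking factor $|x-x'|$, and then invoke continuity of $h_i$ together with a dominated convergence argument to recover the bound for the original $v$.
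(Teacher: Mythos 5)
Your overall decomposition (reduce to continuity of $x\mapsto J(x,v)$ uniformly in $v$, truncate the tail via Lemma~\ref{lem-growth}, handle running cost and reflection cost separately) is the right shape, and you have correctly identified the reflection-cost term as the real obstacle. But your proposed resolution of that obstacle has a genuine gap, and it is not the route the paper takes.

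You only claim a sup-norm stability estimate for $(X,Y)$, i.e.\ a bound on $\sup_{s\le T}|Y(s)-Y'(s)|$, and then observe (correctly) that the crude total-variation bound $|h_i|_\infty\,\Exp\int e^{-\beta s}\,d(Y_i+Y'_i)$ does not go to zero. Your fix---approximate $v$ by piecewise-constant controls $v^{(n)}$, integrate by parts, and then ``choose the discretization scale in terms of $|x-x'|$'' so the growing variation of $h_i(v^{(n)})$ is compensated---is exactly the point at which the argument is not actually carried out: there is no quantitative relationship offered between the BV norm of $h_i(v^{(n)})$ and the rate at which $v^{(n)}\to v$, and since admissible $v$ are merely progressively measurable (not continuous), neither the rate of approximation nor the convergence of the associated $(X^{(n)},Y^{(n)})$ is under control. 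What the paper does instead is establish, pathwise, a bound on the \emph{bounded-variation norm} $\bvnorm{Y^n-\tilde Y^n}_{st}$ of the \emph{difference} of the boundary processes (see \eqref{16}), not merely a sup-norm bound. This is obtained from Shashiashvili's lemma on the variational distance between maximal functions (\cite{shashiasvili}), plugged into the fixed-point representation $Y_i=\calT_i(X,Y)$ from Theorem~\ref{Thm-existence}. Once $\bvnorm{Y^n-\tilde Y^n}_T\to 0$ a.s., the reflection-cost difference is bounded directly by $|h_i|_\infty\,\bvnorm{Y^n_i-\tilde Y^n_i}_T\to 0$, with no integration by parts and no approximation of the control. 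The dominated-convergence step then uses the moment estimates from Lemma~\ref{lem-growth}. So the missing ingredient in your argument is precisely the total-variation Lipschitz stability of the Skorokhod/reflection map, which is what makes the controlled-reflection setting tractable here.
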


\begin{proof}
To show the continuity we prove the following: for any sequence $\{v^n\}\in\Uadm$ and $x_n\to x\in \Rd_+$, as $n\to\infty$, we have
\begin{equation}\label{13}
|J(x_n, v^n)- J(x, v^n)|\to 0 , \quad \text{as}\quad n\to\infty,
\end{equation}
where $J$ is given by \eqref{2.5}. From the proof of Lemma~\ref{lem-growth} it is clear that
$$\limsup_{T\to\infty}\sup_{v\in\Uadm}\Exp_x\Bigl[ \int_T^\infty e^{-\beta s}\big(\ell(X(s))ds + \sum_{i\in\calI} h_i(v(s))dY_i(s)\Bigr]=0.$$
Therefore to prove \eqref{13} it is enough to show that for any $T>0$,
\begin{align}\label{14}
& \Exp_x\Bigl[ \int_0^T e^{-\beta s}\big(\ell(X^n(s))ds + \sum_{i\in\calI} h_i(v^n(s))dY^n_i(s)\Bigr]\nonumber\\
&\qquad-\Exp_x\Bigl[ \int_0^T e^{-\beta s}\big(\ell(\tilde{X}^n(s))ds + \sum_{i\in\calI} h_i(v^n(s))d\tilde{Y}^n_i(s)\Bigr]
\to 0,
\end{align}
as $n\to \infty$ where $(X^n, Y^n), (\tilde{X}^n, \tilde{Y}^n)$ are the solutions of \eqref{1} with the control $v^n$ and the initial data $x_n, x$, respectively. It should be noted that both the solutions
$(X^n, Y^n), (\tilde{X}^n, \tilde{Y}^n)$ are governed by the control $v^n$ for all $n$.
Also from \eqref{9} and the calculations in \eqref{11} we see that dominated convergence
theorem can be applied to establish \eqref{14} if we can show the following hold,
\begin{equation}\label{15}
\begin{split}
\Big| \int_0^T e^{-\beta s}\big(\ell(X^n(s))ds-\int_0^T e^{-\beta s}\big(\ell(\tilde{X}^n(s))ds \Big| &\to 0 \quad{a.s.,}
\\
\Big|\int_0^T e^{-\beta s}\sum_{i\in\calI} h_i(v^n(s))dY^n_i(s)- \int_0^T\sum_{i\in\calI} e^{-\beta s}h_i(v^n(s))d\tilde{Y}^n_i(s) \Big| & \to 0 \quad{a.s.}
\end{split}
\end{equation}
We recall the maps $(\calT, \calS)$ from Theorem~\ref{Thm-existence}. We have
\begin{align*}
Y^n_i(t)=\calT_i(X^n, Y^n)(t) & = \sup_{0\leq s\leq t} \max\{0, -U_i(X^n, Y^n)(s)\},
\\
X^n_i(t)=\calS_i(X^n, Y^n)(t) &= U_i(X^n, Y^n)(t) + \calT_i(X^n, Y^n)(t),
\\
\text{where} \;\; U_i(X^n, Y^n)(t) &=(x_n)_i +\int_0^t b_i(X^n(s))ds + (\Sigma W)_i(t) +\sum_{j\neq i}\int_0^tv^n_{ij}(s)dY^n_j(s).
\end{align*}
The above relation also holds if we replace $(X^n, Y^n), x_n$ by $(\tilde{X}^n, \tilde{Y}^n)$ and $x$, respectively. Let $\bvnorm{\cdot}_{st}$ denote the
bounded variation norm on the interval $[s, t]$. Then using the result from \cite[pp. 171]{shashiasvili} we have for all $i\in \calI$, 
\begin{equation*}
\begin{split}
\bvnorm{Y^n_i-\tilde{Y}^n_i}_{s t} &\leq \babs{Y^n_i(s)-\tilde{Y}^n_i(s)} + \babs{U_i(X^n, Y^n)(s)-U_i(\tilde{X}^n, \tilde{Y}^n)(s)}
\\
&\, \quad +\bvnorm{U_i(X^n, Y^n)-U_i(\tilde{X}^n, \tilde{Y}^n)}_{st}.
\end{split}
\end{equation*}
Now one can follow similar calculation as in \eqref{a-4} to obtain
\begin{align*}
\bvnorm{Y^n-\tilde{Y}^n}_{st} &\leq \sum_{i\in\calI}\babs{Y^n_i(s)-\tilde{Y}^n_i(s)} + 
sKd\norm{X^n-\tilde{X}^n}_s + Kd(t-s) \sup_{r\in[s, t]}\norm{X^n(r)-\tilde{X}^n(r)}
\\
&\quad +\max_{i}\al_i\, \bvnorm{Y^n-\tilde{Y}^n}_{st},
\end{align*}
which gives
\begin{equation}\label{16}
\bvnorm{Y^n-\tilde{Y}^n}_{st} \leq \frac{1}{1-\max_i\al_i}\Big(\sum_{i\in\calI}\babs{Y^n_i(s)-\tilde{Y}^n_i(s)} + Kds\norm{X^n-\tilde{X}^n}_s +
 Kd(t-s) \sup_{r\in[s, t]}\norm{X^n(r)-\tilde{X}^n(r)}\Big).
\end{equation}
Similarly, we get
\begin{align*}
\sup_{r\in[s, t]}\norm{X^n(r)-\tilde{X}^n(r)}\leq \norm{X^n(s)-\tilde{X}^n(s)} + Kd(t-s)\sup_{r\in[s, t]}\norm{X^n(r)-\tilde{X}^n(r)}
\\
+(1+ \max_i \al_i )\bvnorm{Y^n-\tilde{Y}^n}_{st}  + \sum_{i\in\calI}\babs{Y^n_i(s)-\tilde{Y}^n_i(s)}
\end{align*}
and using \eqref{16}  we have
\begin{align}\label{17}
\sup_{r\in[s, t]}\norm{X^n(r)-\tilde{X}^n(r)} & \leq 
\big(1+\frac{2\vee(2Kds)}{1-\max_i \al_i}\big)(\norm{X^n(s)-\tilde{X}^n(s)} + 
\sum_{i\in\calI}\babs{Y^n_i(s)-\tilde{Y}^n_i(s)})
\nonumber
\\
&\quad +Kd(t-s)(1+\frac{2}{1-\max_i \al_i})\sup_{r\in[s, t]}\norm{X^n(r)-\tilde{X}^n(r)}.
\end{align}
Now we choose $\theta>0$ so that $Kd\theta(1+\frac{2}{1-\max_i \al_i})<1$. Since $x_n\to x$, we obtain from \eqref{16} and  \eqref{17} that
$\norm{X^n-\tilde{X}^n}_\theta, \bvnorm{Y^n-\tilde{Y}^n}_\theta$ tends to $0$ as $n\to\infty$. Similar convergence can be extended on interval
$[0, k\theta], k$ is a positive integer,  by an iterative procedure and using \eqref{16} and \eqref{17}.
This proves \eqref{15}.
\end{proof}

Now we prove the dynamic programming principle for $V$. Recall that $\B_r(x)$ denotes the ball of
radius $r$ around $x$ in $\Rd_+$.
Let $(X, Y)$ be a solution of \eqref{1} with initial data $x$. Denote
\begin{equation}\label{20}
\sigma_r\df \inf\{t\geq 0\, :\, X(t)\notin \B_r(x)\}.
\end{equation}
That is, $\sigma_r$ denotes the exit time of $X$ from the ball $\B_r(x)$.

\begin{proposition}[Dynamic programming principle]\label{prop-dpp}
Let $\sigma=\sigma_r$ be as above. Then for any $t>0$ we have
\begin{equation}\label{12}
V(x) = \inf_{v\in\Uadm}\Exp_x\Bigl[\int_0^{\sigma\wedge t} e^{-\beta s}\big(\ell(X(s))ds + \sum_{i\in\calI} h_i(v(s))dY_i(s)\big)
 + e^{-\beta\, \sigma\wedge t}V(X(\sigma\wedge t))\Bigr].
\end{equation}
\end{proposition}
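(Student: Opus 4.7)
The plan is to prove the two inequalities $V(x)\le(\text{RHS})$ and $V(x)\ge(\text{RHS})$ of \eqref{12} separately, using the strong Markov property of the reflected SDE \eqref{1} together with a concatenation argument. Write $\tau := \sigma\wedge t$. By pathwise uniqueness in Theorem~\ref{Thm-existence}, for any $v\in\Uadm$ with solution $(X,Y)$ the shifted process $(X(\tau+\cdot),Y(\tau+\cdot)-Y(\tau))$ is the unique solution of \eqref{1} starting at $X(\tau)$, driven by $\tilde W(s):=W(\tau+s)-W(\tau)$ and controlled by $\tilde v(s,\omega):=v(\tau+s,\omega)$; here $\tilde v$ is admissible with respect to the shifted filtration $\tilde\calF_s:=\calF_{\tau+s}$, and $\tilde W$ is a Brownian motion in this filtration that is independent of $\calF_\tau$.

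For the direction $V(x)\ge(\text{RHS})$, fix $v\in\Uadm$ and split the integral defining $J(x,v)$ in \eqref{2.5} at $\tau$. The tower property and the strong Markov observation above give
\[
J(x,v) \;=\; \Exp_x\Bigl[\int_0^\tau e^{-\beta s}\bigl(\ell(X(s))\,ds+\sum_{i\in\calI} h_i(v(s))\,dY_i(s)\bigr) + e^{-\beta\tau}J(X(\tau),\tilde v)\Bigr].
\]
Because $J(y,\tilde v)\ge V(y)$ for every admissible $\tilde v$ and every $y\in\Rd_+$, substituting $y=X(\tau)$ and then taking infimum over $v\in\Uadm$ delivers $V(x)\ge(\text{RHS})$.

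For the reverse inequality, fix $\eps>0$ and an arbitrary $v^1\in\Uadm$ with solution $(X^1,Y^1)$. Using continuity of $V$ from Lemma~\ref{lem-cont} together with the fact that the continuity estimate there is uniform in the control, partition $\Rd_+$ into countably many disjoint Borel sets $\{B_k\}$ of sufficiently small diameter, select representatives $y_k\in B_k$, and pick $v^k\in\Uadm$ with $J(y_k,v^k)\le V(y_k)+\eps$, so that $J(y,v^k)\le V(y)+C\eps$ uniformly for $y\in B_k$. Define the concatenated control $v^\star$ by setting $v^\star=v^1$ on $[0,\tau]$ and, on the $\calF_\tau$-measurable event $\{X^1(\tau)\in B_k\}$, setting $v^\star(\tau+s)$ equal to a copy of $v^k$ driven by the shifted Brownian motion $\tilde W$ with initial state $X^1(\tau)$. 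Then $v^\star\in\Uadm$, and splitting $J(x,v^\star)$ at $\tau$ as before gives
\[
V(x)\;\le\; J(x,v^\star)\;\le\;\Exp_x\Bigl[\int_0^\tau e^{-\beta s}\bigl(\ell(X^1(s))\,ds+\sum_{i\in\calI}h_i(v^1(s))\,dY^1_i(s)\bigr) + e^{-\beta\tau}V(X^1(\tau))\Bigr]+C\eps.
\]
Letting $\eps\downarrow 0$ and then infimizing over $v^1\in\Uadm$ completes the argument.

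The main obstacle is ensuring that the concatenated control $v^\star$ is genuinely in $\Uadm$, that is, jointly measurable, $\{\calF_t\}$-adapted, and $\M$-valued. The countable-partition device sidesteps a general measurable-selection theorem: on each $\calF_\tau$-measurable cell $\{X^1(\tau)\in B_k\}$ the post-$\tau$ piece of $v^\star$ is a deterministic composition of the fixed $v^k$ with the shift of the Brownian motion, and adaptedness to $\{\calF_t\}$ follows from independence of $\tilde W$ from $\calF_\tau$. A secondary subtlety is the uniform continuity bound $|J(y,v^k)-J(y_k,v^k)|\le C\eps$ on $B_k$ uniformly in $k$, which is furnished by the contraction estimates \eqref{16}--\eqref{17} combined with the polynomial tail control from Lemma~\ref{lem-growth}; together these produce a modulus of continuity in the initial data that does not depend on the control.
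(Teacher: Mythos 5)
Your proof takes essentially the same route as the paper's: for $V(x)\ge\mathrm{RHS}$ you use the Markov property to bound the conditional tail by $e^{-\beta\tau}V(X(\tau))$ (the paper's \eqref{18}), and for $V(x)\le\mathrm{RHS}$ you concatenate $\eps$-optimal controls over a measurable partition, which is exactly the content of the paper's Lemma~\ref{lem-A1}, including the device of representing each $v^k$ as a progressively measurable functional $\Psi_k$ of the Brownian path so that adaptedness of the glued control is automatic. One small wrinkle: you partition all of $\R^d_+$ into countably many cells and invoke a modulus of continuity for $J$ in the initial state "uniformly in $k$," but the estimates \eqref{16}--\eqref{17} and the polynomial tail bounds give only local uniformity, not uniform continuity over the whole unbounded orthant; the paper avoids this by observing that $X(\sigma\wedge t)$ lies deterministically in the compact ball $\bar\B_r(x)$ and partitioning only that ball into finitely many cells, where uniform continuity is immediate. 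Restricting your countable partition to $\bar\B_r(x)$ (and discarding the rest) makes your argument match the paper's.
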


\begin{proof}
Using the Markov property of Brownian motion one gets that for any $v\in\Uadm$,
\begin{equation}\label{18}
 \Exp_x\Big[\int_{\sigma\wedge t}^\infty e^{-\beta\, s}(\ell(X(s))ds +\sum_{i\in\calI} h_i({v})dY_i)\, \Big| \mathcal{F}_{\sigma\wedge t}\Big]
\geq  e^{-\beta\, \sigma\wedge t} V(X(\sigma\wedge t)).
\end{equation}
See for example \cite[Lemma~3.2]{yong-zhou}. For $\eps>0$, we have $v\in\Uadm$ such that
\begin{align*}
V(x) & \geq J(x, v) - \eps
\\
&= \Exp\Big[ \int_0^{\sigma\wedge t}e^{-\beta\, s}(\ell(X(s))ds +\sum_{i\in\calI} h_i({v})dY_i) +
\\
&\, \quad\hspace{.2in}
\Exp_x\Big[\int_{\sigma\wedge t}^\infty e^{-\beta\, s}(\ell(X(s))ds +\sum_{i\in\calI} h_i({v})dY_i)\, \Big| \mathcal{F}_{\sigma\wedge t}\Big]\Big]-\eps
\\
&\geq  \Exp_x\Big[ \int_0^{\sigma\wedge t}e^{-\beta\, s}(\ell(X(s))ds +\sum_{i\in\calI} h_i({v})dY_i) +
e^{-\beta\, \sigma\wedge t} V(X(\sigma\wedge t))\Big]-\epsilon,
\end{align*}
where in the last inequality we use \eqref{18}. Since $\eps$ is arbitrary,  we get
\begin{equation}\label{19}
V(x)\geq \inf_{v\in\Uadm}\Exp_x\Big[ \int_0^{\sigma\wedge t}e^{-\beta\, s}(\ell(X(s))ds +\sum_{i\in\calI} h_i({v})dY_i) +
e^{-\beta\, \sigma\wedge t} V(X(\sigma\wedge t))\Big].
\end{equation}
To prove the other direction we use Lemma~\ref{lem-A1} from Appendix which says that
for given $\eps>0, \, v\in\Uadm$, there exist control $\tilde{v}\in\Uadm$ such that
$$\Exp_x\Big[\int_{\sigma\wedge t}^\infty e^{-\beta\, s}(\ell(X(s))ds +\sum_{i\in\calI} h_i(\tilde{v})dY_i)\, \Big| \mathcal{F}_{\sigma\wedge t}\Big]
\leq  e^{-\beta\, \sigma\wedge t} V(X(\sigma\wedge t))+ 3\eps,$$
and
$$v(s)=\tilde{v}(s)\quad \text{for all}\,\; s\leq \sigma\wedge t.$$
Then
\begin{align*}
V(x) &\leq \Exp_x\Big[ \int_0^{\sigma\wedge t}e^{-\beta\, s}(\ell(X(s))ds +\sum_{i\in\calI} h_i({v})dY_i) +
\\
&\hspace{.2in}  \Exp_x\Big[\int_{\sigma\wedge t}^\infty e^{-\beta\, s}(\ell(X(s))ds +\sum_{i\in\calI} h_i(\tilde{v})dY_i)\, \Big| \mathcal{F}_{\sigma\wedge t}\Big]\Big]
\\
&\leq \Exp_x\Big[ \int_0^{\sigma\wedge t}e^{-\beta\, s}(\ell(X(s))ds +\sum_{i\in\calI} h_i({v})dY_i)
+ e^{-\beta\, \sigma\wedge t} V(X(\sigma\wedge t)) \Big]
+ 3\eps.
\end{align*}
Now we take the infimum over $v$ to get the other side inequality. Hence \eqref{12} follows using \eqref{19} and the above display.
\end{proof}

\begin{lemma}\label{lem-small}
Given $r, \eps\in(0, 1)$ there exists $t\in(0, 1)$ such that
$$\sup_{v\in\Uadm}\Prob_x(\sigma_r\leq t)<\eps \quad \text{for all}\, \, x\in\Rd_+,$$
where $\sigma_r$ is given by \eqref{20}.
\end{lemma}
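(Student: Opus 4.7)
The plan is to show that $\sup_{u\le t}|X(u)-x|$ can be controlled pathwise by an expression involving only $t$ and $\sup_{s\le t}|W(s)|$ — with constants that do not depend on the control $v$ or the starting point $x$ — and then use a Brownian small-ball estimate.

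First, I would write \eqref{1} in the form
\[
X(u)-x \;=\;\int_0^u b(X(s))\,ds \;+\;\Sigma W(u)\;+\;\int_0^u v(s)\,dY(s),\qquad u\in[0,t].
\]
By Condition~\ref{cond1}, the drift contributes at most $Kt$. Since $\M$ is compact and each $v(s)\in\M$, the entries of $v(s)$ are bounded by a constant $C_0$ depending only on $\M$, so the $i$-th component of the reflection term is bounded by $C_0\sum_{j}Y_j(u)$.

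Next, I would invoke the total-variation bound \eqref{a-3} used in the proof of Lemma~\ref{lem-growth}. Under Conditions~\ref{cond1} and \ref{cond2} this yields
\[
\sum_{i\in\calI}Y_i(u)\;\le\;\frac{\kap}{1-\max_i\al_i}\Bigl(u+\sup_{0\le s\le u}|W(s)|\Bigr),
\]
with $\kap$ depending only on $K$, $\Sigma$ and $d$ (in particular, independent of $v$ and of $x$). Combining the three estimates, I obtain a constant $C_*$, independent of $v$ and $x$, such that
\[
\sup_{0\le u\le t}|X(u)-x|\;\le\; C_*\Bigl(t+\sup_{0\le s\le t}|W(s)|\Bigr).
\]
Since $\sigma_r=\inf\{u\ge 0:|X(u)-x|\ge r\}$, the event $\{\sigma_r\le t\}$ is contained in $\bigl\{C_*(t+\sup_{s\le t}|W(s)|)\ge r\bigr\}$.

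Finally, given $r,\eps\in(0,1)$, I choose $t\in(0,1)$ so small that $C_* t<r/2$; then
\[
\Prob_x(\sigma_r\le t)\;\le\;\Prob\Bigl(\sup_{0\le s\le t}|W(s)|\ge \tfrac{r}{2C_*}\Bigr),
\]
and by Brownian scaling the right-hand side equals $\Prob(\sqrt{t}\sup_{s\le 1}|W(s)|\ge r/(2C_*))$, which tends to $0$ as $t\downarrow 0$ and is independent of both $x$ and $v$. Shrinking $t$ further if necessary gives the required bound less than $\eps$.

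The main obstacle is the uniform-in-$v$ total-variation control of the reflection process $Y$: everything hinges on having a pathwise bound of the form $\sum_i Y_i(t)\le C(t+\sup_{s\le t}|W(s)|)$ whose constant does not blow up over admissible controls. This is precisely where Condition~\ref{cond2} ($\max_i\alpha_i<1$) enters through the appendix estimate \eqref{a-3}; without it one could not decouple the reflection feedback from $v$, and the rest of the argument (pathwise comparison plus Brownian small-ball) would fail.
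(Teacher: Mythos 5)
Your proposal is correct and follows essentially the same route as the paper: a pathwise bound $\sup_{u\le t}|X(u)-x|\le C_*(t+\sup_{s\le t}|W(s)|)$, uniform in $v$ and $x$, obtained by combining the bounded drift, the compactness of $\M$, and the total-variation estimate \eqref{a-3} (whose constant is controlled by $1-\max_i\al_i$ via Condition~\ref{cond2}), followed by a Brownian small-ball estimate after choosing $t$ with $C_*t<r/2$. The only cosmetic difference is that the paper finishes with Doob's $L^2$ maximal inequality to get an explicit bound $\le 16d^2\kap_1^2 t/r^2$, whereas you invoke Brownian scaling to send the probability to zero; both give the same conclusion.
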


\begin{proof}
Fix $x\in\Rd$ and $v\in\Uadm$. Let $(X, Y)$ be the solution of \eqref{1} given the  control $v$ and initial data $x$. Then for
$i\in\calI$ we have
\begin{equation}\label{21}
X_i(t) = x_i + \int_0^t b_i(X(s))ds + (\Sigma W(t))_i + \sum_{j=1}^d \int_0^t v_{ij}(s) dY_j(s), \quad t\geq 0.
\end{equation}
Then summing over $i$ we get from \eqref{21} that
\begin{align}\label{22}
\sum_{i\in\calI}\abs{X_i(t)-x_i} &\leq t \sum_{i\in\calI} \sup_{ x\in\Rd_+}\abs{b_i(x)} + \sum_{i\in\calI} \babs{(\Sigma W(t))_i}
+ (1+\max_i\al_i)\sum_{i\in\calI} Y_j(t)\nonumber
\\
&\leq \kap_1 \Bigl(t \sup_{x\in\Rd_+}\abs{b(x)} + \sum_{i\in\calI}\sup_{s\leq t} \abs{W_i(s)}\Bigr),
\end{align}
for some constant $\kap_1$, depending on $\Sigma, \al$, where in the last inequality we use \eqref{a-3}.
Now if $\abs{X(t)-x}\geq r$ and $t\, \kap_1\sup_{x\in\Rd_+}\abs{b(x)}<r/2$ then using \eqref{22} we obtain
\begin{align*}
r/2<  \kap_1\, \sum_{i\in\calI}\sup_{s\leq t} \abs{W_i(s)}.
\end{align*}
Thus we have for the above choice $t$ that
\begin{align*}
\sup_{v\in\Uadm}\Prob_x\Bigl(\sup_{s\leq t}\abs{X(s)-x}\geq r\Bigr) &\leq \Prob\Bigl(\sum_{i\in\calI}\sup_{s\leq t} \abs{W_i(s)}>\frac{r}{2\kap_1}\Bigr)
\\
&\leq \frac{4d^2\kap^2_1}{r^2} \Exp \bigl[\sup_{s\leq t} |W_1(s)|^2 \bigr]
\\
&\leq \frac{16d^2\kap^2_1}{r^2}\, t,
\end{align*}
where in the last line we use Doob's martingale inequality. Thus we can further restrict $t$, depending on $\eps$, so that the rhs of above
display is smaller that $\eps$. This completes the proof observing that $\Prob_x(\sigma_r\leq t)\leq \Prob_x(\sup_{s\leq t}\abs{X(s)-x}\geq r)$.
\end{proof}

Now we are ready to show that the value function $V$, given by \eqref{3}, is a viscosity solution. This clearly 
concludes the proof of Theorem \ref{Thm-main} (a). 

\begin{theorem}\label{Thm-viscosity}
Assume that Conditions~\ref{cond1}--\ref{cond3} hold.
The value function $V$ is a viscosity solution of the HJB given by \eqref{4}.
\end{theorem}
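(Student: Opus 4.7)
The plan is to prove the subsolution and supersolution properties separately, both via a contradiction argument based on the dynamic programming principle (Proposition~\ref{prop-dpp}) and Itô's formula applied to $e^{-\beta s}\varphi(X(s))$, where the boundary term involves $D\varphi(X(s))\cdot v(s)e_j\,dY_j(s)$ since $dY_j$ is supported on $\{X_j=0\}$. A key geometric fact I will use repeatedly is that for $x$ in a sufficiently small neighborhood of $x_0$, we have $\I(x)\subseteq \I(x_0)$, so that for $j\notin\I(x_0)$ and $r$ small, $dY_j(s)\equiv 0$ up to the exit time $\sigma_r$.

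For the subsolution property, I suppose $\varphi\in\calC^2(\Rd)$, $V-\varphi$ has a local maximum at $x_0$, WLOG $V(x_0)=\varphi(x_0)$, and that both
\[
\calL\varphi(x_0)-\beta V(x_0)+\ell(x_0)\le -\theta,\qquad
\max_{i\in\I(x_0)}\calH_i(D\varphi(x_0))\le -\theta
\]
for some $\theta>0$ (the second condition being vacuous if $\I(x_0)=\emptyset$). By Condition~\ref{cond3} applied with $p=D\varphi(x_0)$ and $\eta=-\theta$, there exists $M^\star\in\M$ such that $D\varphi(x_0)\cdot M^\star e_i+h_i(M^\star)<-\theta/4$ for every $i\in\I(x_0)$. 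By continuity of $\varphi,D\varphi,\calL\varphi,\ell,h_i$ and the inclusion $\I(x)\subseteq\I(x_0)$, I can fix $r>0$ small enough that on $\B_r(x_0)$ one has $\ell+\calL\varphi-\beta\varphi\le -\theta/2$ and $D\varphi(x)\cdot M^\star e_i+h_i(M^\star)\le -\theta/8$ for every $i\in\I(x_0)$. Using the constant control $v\equiv M^\star$ in Proposition~\ref{prop-dpp} with $\tau=\sigma_r\wedge t$, applying Itô to $e^{-\beta s}\varphi(X(s))$ and the local-maximum inequality $V(X(\tau))\le \varphi(X(\tau))$, I arrive at
\[
0\le \Exp_{x_0}\!\Big[\!\int_0^\tau\!\! e^{-\beta s}(\ell+\calL\varphi-\beta\varphi)(X(s))ds+\sum_{j\in\I(x_0)}\!\int_0^\tau\!\! e^{-\beta s}(D\varphi\cdot M^\star e_j+h_j(M^\star))dY_j(s)\Big],
\]
whose right-hand side is bounded above by $-\tfrac{\theta}{2}\Exp_{x_0}[\int_0^\tau e^{-\beta s}ds]<0$, a contradiction.

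For the supersolution property I suppose $V-\varphi$ has a local minimum at $x_0$ with $V(x_0)=\varphi(x_0)$, and that
\[
\calL\varphi(x_0)-\beta V(x_0)+\ell(x_0)\ge \theta,\qquad
\min_{i\in\I(x_0)}\calH_i(D\varphi(x_0))\ge \theta
\]
for some $\theta>0$. Unpacking the definition of $\calH_i$ gives $D\varphi(x_0)\cdot Me_i+h_i(M)\ge\theta$ for every $M\in\M$ and $i\in\I(x_0)$; by uniform continuity on the compact set $\M\times\overline{\B_r(x_0)}$ and again the inclusion $\I(x)\subseteq\I(x_0)$, I shrink $r$ so that $\ell+\calL\varphi-\beta\varphi\ge\theta/2$ on $\B_r(x_0)$ and $D\varphi(x)\cdot Me_i+h_i(M)\ge\theta/2$ for all $M\in\M$, $x\in\B_r(x_0)$, $i\in\I(x_0)$. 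Now I use Lemma~\ref{lem-small} to pick $t_0\in(0,1)$ with $\sup_{v\in\Uadm}\Prob_{x_0}(\sigma_r\le t_0)\le 1/2$, set $\tau=\sigma_r\wedge t_0$, and note $\Exp_{x_0}[\int_0^\tau e^{-\beta s}ds]\ge \tfrac{1}{2}t_0 e^{-\beta t_0}=:c_0>0$ uniformly over admissible controls. Given any $\eps>0$, choose $v\in\Uadm$ that is $\eps$-optimal for the DPP at $\tau$; applying Itô and the local-minimum inequality $V(X(\tau))\ge \varphi(X(\tau))$ yields
\[
\eps\ge \Exp_{x_0}\!\Big[\!\int_0^\tau\!\! e^{-\beta s}(\ell+\calL\varphi-\beta\varphi)(X(s))ds+\sum_{j\in\I(x_0)}\!\int_0^\tau\!\! e^{-\beta s}(D\varphi\cdot v(s)e_j+h_j(v(s)))dY_j(s)\Big]\ge \tfrac{\theta}{2}c_0,
\]
which is impossible once $\eps<\tfrac{\theta}{2}c_0$.

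The main obstacle is the subsolution case: one must commit to a single control $M^\star$ \emph{before} knowing which face of the boundary the reflection happens on. Condition~\ref{cond3} is precisely what makes this possible, and using it at the outset to produce $M^\star$ (rather than trying to combine per-face minimizers) is the crucial move. The rest of the argument is a routine DPP + Itô computation, with Lemma~\ref{lem-small} providing the uniform positive lower bound $\Exp_{x_0}[\tau]\gtrsim t_0$ that is required to close the supersolution inequality, and the inclusion $\I(x)\subseteq\I(x_0)$ (plus $dY_j$ being supported on $\{X_j=0\}$) allowing us to restrict attention to indices in $\I(x_0)$ throughout.
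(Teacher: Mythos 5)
Your proof is correct and follows essentially the same route as the paper: DPP plus It\^o applied to $e^{-\beta s}\varphi(X(s))$, with Condition~\ref{cond3} producing the single constant control $M^\star$ for the subsolution half, the inclusion $\I(y)\subseteq\I(x_0)$ on a small ball localizing $dY$ to the relevant faces, and Lemma~\ref{lem-small} supplying the uniform-in-$v$ lower bound on $\Exp[\int_0^\tau e^{-\beta s}\,ds]$ needed in the supersolution half. The only (cosmetic) differences are that you phrase the supersolution contradiction via $\eps$-optimal controls rather than the paper's infimum manipulation, and you correctly observe that Lemma~\ref{lem-small} is dispensable for the subsolution part since the constant control $M^\star$ fixes $\sigma_r>0$ a.s.
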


\begin{proof} 
First we show that $V$ is a super-solution.
Let $\varphi\in\calC^2(\Rd_+)$ be such that $V-\varphi$ attends its local minimum at $x\in\Rd_+$. We can also assume that $V(x)=\varphi(x)$,
otherwise we have to translate $\varphi$. Therefore we need to show that
\begin{equation}\label{23}
(\calL \varphi(x) -\beta\varphi (x)+ \ell(x)) \wedge \min_{x\in\I(x)}\calH_i(D\varphi(x))\leq 0.
\end{equation}
Let us assume that \eqref{23} does not hold and we have a $\theta>0$ such that
$$(\calL \varphi(x) -\beta\varphi (x)+ \ell(x)) \wedge \min_{x\in\I(x)}\calH_i(D\varphi(x))\geq 2\theta.$$
Using the continuity property we can find $r>0$ such that the following holds
\begin{equation}\label{24}
\begin{split}
\calL \varphi(y) - \beta \varphi(y) + \ell(y) & \geq \theta 
\quad \text{for all}\, \; y\in\B_r(x),\;\; V-\varphi\geq 0 \;\text{on}\;\; \B_r(x),
\\
\calH_i(D\varphi(y)) &\geq \theta \quad \text{for}\;\; y\in\B_r(x), \; \text{and } \; \I(y)\subset\I(x)  \;\; \text{for}\;\; y\in\B_r(x).
\end{split}
\end{equation}
From \eqref{24} we get
\begin{equation}\label{25}
D\varphi(y)\cdot Me_i + h_i(M)\geq \theta  \quad \text{for all} \;\; M\in\M,\;\;i\in\I(y), \; y\in\B_r(x).
\end{equation}
Let $v\in\Uadm$ and $(X, Y)$ be the corresponding solution to \eqref{1} with initial data $x$. Define
$\sigma=\sigma_r=\inf\{t\geq 0\; :\; X(t)\notin \B_r(x)\}$.
Apply It\^{o} formula  \cite{kallenberg} to obtain
\begin{align*}
V(x)=\varphi(x)& = \Exp_x[e^{-\beta\, \sigma\wedge t}\varphi(X(\sigma\wedge t))]
\\
&\, \hspace{.2in}-\Exp_x\Big[\int_0^{\sigma\wedge t}e^{-\beta s}\bigl(\calL\varphi(X(s))ds -\beta\varphi(X(s))ds + D\varphi(s)\cdot v(s)dY(s)\bigr)\Big]
\\
&\leq \Exp_x[e^{-\beta\, \sigma\wedge t}V(X(\sigma\wedge t))]
\\
&\, \hspace{.2in}+\Exp_x\Big[\int_0^{\sigma\wedge t}e^{-\beta s}\bigl(\ell(X(s))ds -\theta ds- D\varphi(s)\cdot v(s)dY(s)\bigr)\Big],
\end{align*}
where in the last line we use \eqref{24}. Again using \eqref{25} we see that
\begin{align*}
\int_0^{\sigma\wedge t} e^{-\beta\, s}D\varphi(X(s))\cdot v(s)dY(s) &= \sum_{i\in\calI} \int_0^{\sigma\wedge t} e^{-\beta\,s}D\varphi(X(s))\cdot v(s)e_i\, dY_i(s)
\\
&\geq \sum_{i\in\calI}\int_0^{\sigma\wedge t} e^{-\beta\, s} \big(\theta- h_i(v(s))\big) dY_i(s) ,
\end{align*}
where we use the fact that $\int_0^{\sigma\wedge t}\sum_{i\notin\I(X(s))}dY_i(s) =0$. Thus combining above two displays we obtain
that for any  $v\in\Uadm$,
\begin{align*}
V(x) &\leq \Exp_x\Bigl[e^{-\beta\, \sigma\wedge t}V(X(\sigma\wedge t)) +\int_0^{\sigma\wedge t}e^{-\beta s}\bigl(\ell(X(s))ds +\sum_{i\in\calI}
h_i(v(s))dY_i(s)\bigr)\Bigr]
\\
&\, \hspace{.2in} - \Exp_x\Bigl[\int_0^{\sigma\wedge t} e^{-\beta s} \big(\theta ds + \theta \sum_{i\in\calI} dY_i(s) \big)\Bigr].
\end{align*}
Since $v\in\Uadm$ is arbitrary we have from above that
\begin{equation}\label{26}
V(x) \leq \inf_{v\in\Uadm}\Exp_x\Bigl[e^{-\beta\, \sigma\wedge t}V(X(\sigma\wedge t)) +\int_0^{\sigma\wedge t}e^{-\beta s}\bigl(\ell(X(s))ds +\sum_{i\in\calI}
h_i(v(s))dY_i(s)\bigr)\Bigr]-\al(t),
\end{equation}
where $$\al(t)=\inf_{v\in\Uadm}\Exp_x\Bigl[\int_0^{\sigma\wedge t} e^{-\beta s} \big(\theta ds + \theta \sum_{i\in\calI} dY_i(s) \big)\Bigr].$$
In view Lemma~\ref{lem-small} we can find $t$ so that $\al(t)>0$ which will give us from \eqref{26}
\begin{equation*}
V(x) < \inf_{v\in\Uadm}\Exp_x\Bigl[e^{-\beta\, \sigma\wedge t}V(X(\sigma\wedge t)) +\int_0^{\sigma\wedge t}e^{-\beta s}\bigl(\ell(X(s))ds +\sum_{i\in\calI}
h_i(v(s))dY_i(s)\bigr)\Bigr],
\end{equation*}
but this is contradicting to Proposition~\ref{prop-dpp}. This establishes \eqref{23}.

Now we show that $V$ is also a subsolution. Let $\varphi\in\calC^2(\Rd)$ be such that $V-\varphi$ attends a local maximum at the point $x\in\Rd_+$.
Also assume that $V(x)=\varphi(x)$.
We have to show that
\begin{equation}\label{27}
(\calL \varphi(x) -\beta\varphi (x)+ \ell(x)) \vee \max_{i\in\I(x)}\calH_i(D\varphi(x))\geq 0.
\end{equation}
If not, then  we can find $\theta>0$ such that
\begin{equation}\label{28}
(\calL \varphi(x) -\beta\varphi (x)+ \ell(x)) \vee \max_{i\in\I(x)}\calH_i(D\varphi(x))\leq -2\theta.
\end{equation}
By Condition ~\ref{cond3} there exists $M\in\M$ such that
$$\max_{i\in\I(x)}\{D\varphi(x)\cdot Me_i + h_i(M)\}< -\theta. $$
Therefore using \eqref{28} we can find $r>0$ such that $V(y)-\varphi(y)\leq 0$ for $y\in\B_r(x)$ and the following hold:
\begin{equation}\label{29}
\begin{split}
\calL \varphi(y) -\beta\varphi (y)+ \ell(y) &<-\theta/2  \quad \text{for}\, \; y\in \B_r(x),
\\
\{D\varphi(y)\cdot Me_i + h_i(M)\} & < -\theta/2  \quad \text{for}\,\, i\in\I(y), \;\; \I(y)\subset \I(x),\; \text{and}\; y\in\B_r(x).
\end{split}
\end{equation}
Now we take the control $v\equiv M$ and follow the same calculations as above, and using \eqref{29}, to arrive at
\begin{align*}
V(x) &\geq \Exp_x\Bigl[e^{-\beta\, \sigma\wedge t}V(X(\sigma\wedge t)) +\int_0^{\sigma\wedge t}e^{-\beta s}\bigl(\ell(X(s))ds +\sum_{i\in\calI}
h_i(v(s))dY_i(s)\bigr)\Bigr]
\\
&\, \hspace{.2in} + \Exp_x\Bigl[\int_0^{\sigma\wedge t} e^{-\beta s} \big(\theta/2 ds + \theta/2 \sum_{i\in\calI} dY_i(s) \big)\Bigr].
\end{align*}
Now using Lemma~\ref{lem-small} we know that the last term on rhs of above display is strictly positive for a suitably chosen $t$ and therefore we obtain
\begin{align*}
V(x) > \Exp_x\Bigl[e^{-\beta\, \sigma\wedge t}V(X(\sigma\wedge t)) +\int_0^{\sigma\wedge t}e^{-\beta s}\bigl(\ell(X(s))ds +\sum_{i\in\calI}
h_i(v(s))dY_i(s)\bigr)\Bigr],
\end{align*}
but this is contradicting to Proposition \ref{prop-dpp}. This proves \eqref{27}. Hence $V$ is a viscosity solution to \eqref{4}.
\end{proof}

\section{Proof of Theorem~\ref{Thm-main}(b) }\label{S-uni}

In this section we show the uniqueness property of the viscosity solutions of \eqref{4}. Indeed, we prove the following theorem. 

\begin{theorem}\label{T-comp} Let Conditions~\ref{cond1} and \ref{cond2} hold.
Assume that there exists a vector $c=(c_1,\ldots,c_d)\in\R^d$ 
such that $h_i(M)=c\cdot Me_i$ for all $i\in\calI$. 
If $u\in\calC_{\mathrm{pol}}(\R^d_+)$ and $v\in\calC_{\mathrm{pol}}(\R^d_+)$ are, 
respectively, a viscosity subsolution and a viscosity supersolution to 
\eqref{4}, then $u\leq v$ on $\R^d_+$.
\end{theorem}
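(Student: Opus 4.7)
I would follow the classical doubling-of-variables comparison argument for viscosity solutions of second-order HJB equations, adapted to the polyhedral domain $\Rd_+$ and the nonlinear Neumann-type boundary condition via the test function supplied by Theorem~\ref{test-f}. As a first reduction, since $u,v\in\calC_{\mathrm{pol}}(\Rd_+)$ I introduce a perturbation $u_\delta(x)=u(x)-\delta\Theta(x)$, where $\Theta\in\calC^2(\Rd)$ is a smooth positive function with super-polynomial growth at infinity (for instance $\Theta(x)=(1+|x|^2)^{(k+2)/2}$ with $k$ exceeding the polynomial degree of $u,v$) chosen so that $\delta\Theta$ contributes negligible extra terms to both the interior equation and the boundary Hamiltonians. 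Then $u_\delta-v\to-\infty$ at infinity; once I establish $u_\delta\leq v$ for every $\delta>0$, letting $\delta\to 0$ gives $u\leq v$ on $\Rd_+$.

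Assume for contradiction that $M:=\sup_{\Rd_+}(u_\delta-v)>0$. Consider
\begin{equation*}
\Psi_\eps(x,y)\;=\;u_\delta(x)-v(y)-\tfrac{1}{\eps}\psi(x,y),
\end{equation*}
where $\psi$ is the test function from Theorem~\ref{test-f}. By its construction $\psi\geq 0$, $\psi$ penalises $|x-y|$, and crucially at every boundary point $x\in\partial\Rd_+$ the gradient $D_x\psi(x,y)/\eps$ satisfies, with a quantitative strict margin, $\calH_i(D_x\psi/\eps)<0$ for all $i\in\I(x)$, together with the symmetric condition $\calH_i(-D_y\psi/\eps)>0$ for all $i\in\I(y)$. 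Standard compactness arguments, aided by the coercivity of $\delta\Theta$, yield maximizers $(\hat x_\eps,\hat y_\eps)$ in a compact set with $\tfrac{1}{\eps}\psi(\hat x_\eps,\hat y_\eps)$ bounded, $|\hat x_\eps-\hat y_\eps|\to 0$, and $u_\delta(\hat x_\eps)-v(\hat y_\eps)\to M$ as $\eps\to 0$.

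The directional properties of $\psi$ now force the two viscosity inequalities to be realised by their \emph{interior} parts. Indeed, at $\hat x_\eps$ the subsolution alternative $\max_{i\in\I(\hat x_\eps)}\calH_i(D_x\psi/\eps)\geq 0$ is excluded by construction, so the interior inequality $\calL\varphi-\beta u_\delta+\ell\geq 0$ must hold; symmetrically at $\hat y_\eps$ for $v$. Applying the Crandall--Ishii maximum principle to $u_\delta(x)-v(y)-\tfrac{1}{\eps}\psi(x,y)$, I extract $(p_\eps,X_\eps)\in\overline{\mathcal{J}}^{2,+}u_\delta(\hat x_\eps)$ and $(q_\eps,Y_\eps)\in\overline{\mathcal{J}}^{2,-}v(\hat y_\eps)$ with $p_\eps=D_x\psi/\eps$, $q_\eps=-D_y\psi/\eps$, and $\trace(A(X_\eps-Y_\eps))$ controlled by $D^2\psi/\eps$. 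Subtracting the interior sub- and supersolution inequalities and using Lipschitz continuity of $b$ (Condition~\ref{cond1}), continuity of $\ell$, and the bound on $X_\eps-Y_\eps$, one arrives at
\begin{equation*}
\beta\bigl(u_\delta(\hat x_\eps)-v(\hat y_\eps)\bigr)\;\leq\;\omega(|\hat x_\eps-\hat y_\eps|)+C\delta,
\end{equation*}
for a modulus $\omega$. Letting first $\eps\to 0$ and then $\delta\to 0$ yields $\beta M\leq 0$, the desired contradiction.

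The principal obstacle is the construction of the test function $\psi$ used in the second paragraph, and this is precisely where the hypothesis $h_i(M)=c\cdot Me_i$ is essential. Linearity of $h_i$ in $M$ reduces the family of Hamiltonians to $\calH_i(p)=\min_{M\in\M}(p+c)\cdot Me_i$, so the Neumann conditions become linear constraints on $D\psi+c$, whose feasibility region is a convex cone depending continuously on the active index set $\I(x)$. This compatibility across the corner strata of $\partial\Rd_+$, where several faces meet simultaneously, is what makes a globally valid $\psi$ constructible; for a generic nonlinear $h_i$ the corresponding cones can be incompatible and no such universal $\psi$ exists, which explains the restricted scope of Theorem~\ref{Thm-main}(b).
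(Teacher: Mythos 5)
Your overall structure — reduce to a coercive perturbation, double variables with the homogeneous test function $\ph$ of Theorem~\ref{test-f}, apply the Crandall--Ishii lemma, and use the sign properties of $\calH_i(D\ph)$ to force the interior branch of the viscosity inequalities — is the paper's approach. However, there is a genuine gap in the way you handle the boundary alternative, and it is located exactly where the argument is most delicate.

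The test function from Theorem~\ref{test-f} only gives the \emph{non-strict} inequality $\calH_i(D\ph(z))\leq 0$ when $z_i\leq 0$. At the maximum point $(\hat x_\eps,\hat y_\eps)$ with $\hat x_{\eps,i}=0$ you only know $(\hat x_\eps-\hat y_\eps)_i\leq 0$, hence $\calH_i\bigl(\tfrac{1}{\eps}D\ph(\hat x_\eps-\hat y_\eps)\bigr)\leq 0$, and equality is perfectly possible (e.g.\ when $\hat x_\eps=\hat y_\eps$ and $\ph$ vanishes to first order). In that case the subsolution alternative $\max_{i\in\I(\hat x_\eps)}\calH_i(D\varphi)\geq 0$ \emph{is} satisfied (with equality), and the definition of viscosity subsolution does not hand you the interior inequality $\calL\varphi-\beta u+\ell\geq 0$. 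Your claim that the boundary alternative is ``excluded by construction'' therefore fails. Worse, the coercive perturbation $\Theta(x)=(1+|x|^2)^{(k+2)/2}$ you propose cannot repair this, because $D_i\Theta(x)=(k+2)(1+|x|^2)^{k/2}x_i=0$ whenever $x_i=0$; so subtracting $\delta\Theta$ leaves $\calH_i$ unchanged at a boundary point with $x_i=0$, exactly where you need a strict negative margin. You described $\delta\Theta$'s effect as ``negligible'', but the point is that it must not be negligible there.

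The paper closes this gap by choosing the perturbation $\psi(x)=(m+1)^{-1}\sum_j x_j^{m+1}+\Ir\cdot x$, whose crucial feature is the linear term: $D_i\psi(x)=x_i^m-1$, which equals $-1$ on the face $\{x_i=0\}$. Subtracting $\eps(C+\psi)$ from $u$ shifts the boundary Hamiltonian by a \emph{strictly} negative amount, $\calH_i^-(p):=\calH_i(p)-\eps(1-\al_i)$, so $u_\eps$ is a viscosity subsolution of a boundary value problem whose boundary inequality can never hold with the $\geq 0$ alternative active when $\calH_i(p_k)\leq 0$. The analogous additive shift is applied to $v$, producing a strict positive margin on the supersolution side. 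It is this strictification — not coercivity alone — that forces both inequalities to be interior at the doubled maximum. You should also carry out explicitly the preliminary normalization $c=0$, $\beta=1$ (via the substitution $u\mapsto u+c\cdot x$, $v\mapsto v+c\cdot x$ and rescaling), since Theorem~\ref{test-f} is stated only for $h_i\equiv 0$ and does not otherwise apply directly. Your heuristic in the last paragraph about linearity making the Neumann cones ``compatible'' is a reasonable intuition, but the actual mechanism is the explicit formula $\calH_i(p)=p_i-\al_i\max_{j\neq i}p_j^+$ obtained after the reduction, which is what makes the construction of $\ph$ and the strictification by a constant-gradient term possible.
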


Theorem \ref{Thm-main} (b) follows immediately from the theorem above.

\begin{proof}[Proof of Theorem \ref{Thm-main} (b)] 
According to Theorem \ref{Thm-main} (a) (or Theorem \ref{Thm-viscosity}) 
and Lemma \ref{lem-growth} assure that the value function $V$ given by 
\eqref{3} is a viscosity solution to \eqref{4} 
in the class $\calC_{\mathrm{pol}}(\R^d_+)$. Theorem \ref{T-comp} then guarantees 
that $V$ is the unique viscosity solution to \eqref{4} in $\calC_{\mathrm{pol}}(\R^d_+)$. 
\end{proof}

To establish the comparison theorem above, it is crucial to find a test function 
having a few of properties, stated in the next theorem.

\def\gl{\lam}

\begin{theorem}\label{test-f} Assume that $h_i(M)\equiv 0$ for all $i\in\calI$. 
Then 
there exists a globally $\calC^{1,1}$ function $\ph$ on $\R^d$ 
having the properties:  
\begin{align*}
&\ph(y)>0\ \text{ for all }\ y\in \R^d\setminus\{ 0\}, \\
&\ph(\gl y)=\gl^2 \ph(y) \ \text{ for all }\ y\in\R^d \ \text{ and }\ \gl\geq 0,
\intertext{and for each $i\in\calI$,} 
&\calH_i(D\ph(x))\begin{cases}
\geq 0 &\text{ if }\ x_i\geq 0,\\
\leq 0 &\text{ if }\ x_i\leq 0.
\end{cases} \\
\end{align*}
\end{theorem}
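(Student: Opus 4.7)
Since $h_i\equiv 0$, a direct calculation --- minimizing $p\cdot Me_i = p_i - \sum_{j\neq i}p_j P_{ji}$ over admissible $M=\I-P\in\M$ --- gives
$\calH_i(p) = p_i - \al_i\max_{j\neq i}(p_j)^+$, which is positively $1$-homogeneous. The conclusion to be proved is therefore that $\partial_i\ph(x)\geq\al_i\max_{j\neq i}(\partial_j\ph(x))^+$ on $\{x_i\geq 0\}$, with the reverse inequality on $\{x_i\leq 0\}$; in particular $\calH_i(D\ph)=0$ on each hyperplane $\{x_i=0\}$. Since $\calH_i$ is $1$-homogeneous and $D\ph$ is $1$-homogeneous (as $\ph$ is $2$-homogeneous), the map $x\mapsto \calH_i(D\ph(x))$ is continuous and $1$-homogeneous, so the sign condition amounts to the factorization $\calH_i(D\ph(x))=x_i\,\psi_i(x)$ for some nonnegative continuous $0$-homogeneous $\psi_i$.

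My first attempt is the natural piecewise-quadratic candidate: assign to each sign pattern $\s\in\{+,-\}^d$ a symmetric positive-definite matrix $A^\s$, setting $\ph(x)=\tfrac12 x\transp A^\s x$ on the orthant $Q_\s$. Requiring continuity of $A^\s x$ across every coordinate hyperplane $\{x_k=0\}$ forces $A^\s_{jl}$ to be independent of $\s_k$ for all $l\neq k$; iterating over all $k$ shows that the off-diagonal entries must be global constants $\beta_{ij}$ while each diagonal $A^\s_{ii}$ depends only on $\s_i$. This simplifies the candidate to
$$
\ph(x)\;=\;\tfrac12\sum_{i\in\calI}\bigl(a_i^+(x_i^+)^2+a_i^-(x_i^-)^2\bigr)\;+\;\sum_{i<j}\beta_{ij}\, x_i x_j,
$$
which is globally $C^{1,1}$ (each summand has Lipschitz gradient) and $2$-homogeneous, and positive on $\R^d\setminus\{0\}$ whenever the $|\beta_{ij}|$ are kept small relative to the $a_i^\pm$. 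On the positive orthant the inequality $\calH_i(D\ph)\geq 0$ reduces to the pairwise bounds $\al_i a_j^+\leq\beta_{ij}\leq a_i^+/\al_j$, simultaneously solvable thanks to Condition~\ref{cond2} ($\max_i\al_i<1$). Verifying the reverse inequality in the all-negative orthant is symmetric.

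The main obstacle is that this rigid ansatz fails in mixed-sign orthants: a direct computation in $d=2$ shows that if $x_1<0$, $x_2>0$ and $|x_1|$ is sufficiently large, then $\partial_1\ph=a_1^- x_1+\beta_{12}x_2<0$ forces $(\partial_1\ph)^+=0$, so $\calH_2(D\ph)=\partial_2\ph=\beta_{12}x_1+a_2^+x_2$ itself becomes negative, contradicting the required $\calH_2\geq 0$. To repair this I would augment $\ph$ by $2$-homogeneous $C^{1,1}$ correction terms built from the positive and negative parts of the coordinates --- for example, replacing the bilinear off-diagonals $\beta_{ij}x_ix_j$ by building blocks of the form $\beta_{ij}^{++}(x_i^+)(x_j^+)\,\tau(x)$ (with $\tau$ a suitable $0$-homogeneous $C^{1,1}$ weight that deactivates the term where it would hurt) --- or, alternatively, construct $\ph$ variationally from the static Skorokhod problem associated with the saturating reflection matrix $\bar P$ (the column sums of which equal $\al_i$), taking $\ph(x)=\tfrac12|\Pi(x)|^2+\tfrac{\gamma}{2}|\eta(x)|^2$ where $(\Pi,\eta)$ solve $\Pi=x+(\I-\bar P)\eta$, $\Pi\geq 0$, $\eta\geq 0$, $\Pi_i\eta_i=0$. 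In either variant the hardest step is verifying global $C^{1,1}$ regularity at the lower-dimensional corners where several $\{x_i=0\}$ meet, while simultaneously preserving the sign condition on every $\calH_i$ and strict positivity; Condition~\ref{cond2} is exactly what provides the slack that makes all these constraints simultaneously solvable.
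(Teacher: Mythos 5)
Your proposal is not a complete proof: after correctly reducing the problem to the sign condition on $\calH_i(D\ph)$, you offer a piecewise-quadratic ansatz, correctly diagnose that it fails in mixed-sign orthants (your $d=2$ computation is right — for $\beta_{12}>0$ the term $\beta_{12}x_1$ drives $\calH_2(D\ph)$ negative once $x_1\ll 0$, while $\beta_{12}=0$ fails in the positive orthant because one needs $\beta_{ij}\ge \al_i a_j^+$), and then stop at two unrealized sketches. Neither repair is carried out: the ``correction terms with $0$-homogeneous weights'' are never specified, and the Skorokhod-based $\ph=\tfrac12|\Pi(x)|^2+\tfrac{\gam}{2}|\eta(x)|^2$ is never checked for either the sign conditions on $\calH_i$ or the required $\calC^{1,1}$ regularity (the static Skorokhod map is only Lipschitz, so neither property is obvious). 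So the key construction, which is the entire content of the theorem, is missing.

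For comparison, the paper's route is geometric and bypasses the piecewise-quadratic ansatz entirely. One forms the compact convex body $\calS_\del=\bigcap_{\xi\in\Xi_*^d}H(\xi)\cap H_\del^-$, where $\Xi_*^d$ consists of the $2^d-1$ vectors with entries in $\{1,\al_i\}$ not all equal to $\al_i$, the half-spaces $H(\xi)=\{\xi\cdot x\le\theta_{\xi^\#}\}$ use a carefully graded sequence $1=\theta_1<\cdots<\theta_d$ satisfying $\theta_1>\frac{d-1+\max_i\al_i}{d}\theta_d$, and $H_\del^-=\{x_i\ge-\del\ \forall i\}$. The combinatorics of $\Xi_*^d$ and the gap condition on the $\theta_k$ ensure (Lemmas~\ref{lem-A5}--\ref{lem-A9}) that at any boundary point $z$ of $\calS_\del$ near a face $\{x_i=0\}$, every normal direction $\gam\in N(z,\calS_\del)$ satisfies the wanted sign on $\calH_i(\gam)$. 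One then smooths $\calS_\del$ by taking its $\eps$-Minkowski sausage $\calS_\del^\eps$, whose boundary is $\calC^{1,1}$ by the uniform interior-sphere property (Lemma~\ref{lem-A10}), and sets $\ph=\rho^2$ where $\rho$ is the gauge of $\calS_\del^\eps$. Since $D\rho(y)\in N(y/\rho(y),\calS_\del^\eps)$ (Lemma~\ref{lem-A11}) and normals to $\calS_\del^\eps$ project to normals of $\calS_\del$ (Lemma~\ref{lem-12}), the sign conditions transfer. The essential insight you are missing is precisely this: rather than trying to prescribe $\ph$ by formulas on orthants and match across hyperplanes, one prescribes the sublevel sets of $\ph$ as a polytope cut out by the reflection vectors themselves, where the sign of $\calH_i$ on normal directions is a local, face-by-face computation; the homogenization by the gauge function then produces a $2$-homogeneous $\ph$ automatically, and $\calC^{1,1}$ regularity is obtained cheaply via the Minkowski sum with a ball rather than by hand.
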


We note that if $d>1$ and $h_i=0$ for some $i\in\calI$, then 
\[
\calH_i(p)=\min_{M\in\M(\al)}p\cdot M e_i=p_i-\al_i\max_{j\in\calI\setminus\{i\}}p_j^+,
\]
while if $d=1$ and $h_1=0$, then 
\[
\calH_1(p)=p. 
\]

The proof of the theorem above is given in the next section. 

\begin{proof}[Proof of Theorem \ref{T-comp}]
Let $u\in\calC_{\mathrm{pol}}(\R^d_+)$ and $v\in\calC_{\mathrm{pol}}(\R^d_+)$ be,  
respectively, a viscosity subsolution and a viscosity supersolution to 
\eqref{4}.  

First of all, we note that it can be assumed that 
$c=0$ and $\beta=1$. Indeed, setting $u_1(x)=u(x)+c\cdot x$ and $v_1(x)=v(x)+c\cdot x$, 
observing that $w:=u_1$ (resp., $w:=v_1$) is 
a viscosity subsolution (resp., supersolution) to the HJB
\[
\begin{cases}
\calL w -\beta w + \ell(x) +c\cdot(-b(x)+\beta x) = 0\quad  \text{ in }\, (\R^d_+)^\circ,&\\[3pt]
\displaystyle 
\min_{M\in\M(\al)}Dw\cdot M e_i = 0 \quad \text{ for }\, x
\in\partial\R^d_+,\ i\in\I(x).&
\end{cases}
\]
This boundary value problem can be rewritten as
\[
\begin{cases}
\beta^{-1}\calL w -w + \beta^{-1}\big(\ell(x) -c\cdot b(x)\big)+c\cdot x  = 0\quad  \text{ in }\, (\R^d_+)^\circ,&\\[3pt]
\displaystyle 
\min_{M\in\M(\al)}Dw\cdot M e_i = 0 \quad \text{ for }\, x\in
\partial\R^d_+,\ i\in\I(x).&
\end{cases}
\]
Thus, replacing $u$, $v$, $A$, $b$ and $\ell$ by 
$u_1$, $v_1$, 
$\beta^{-1}A$, $\beta ^{-1}b$ 
and $\beta^{-1}(\ell-c\cdot b)+c\cdot x$, if necessary, we may assume that 
$c=0$ and  $\beta=1$. 
\def\N{\mathbb N}

Henceforth we assume that $c=0$ and $\beta=1$. In particular, we have 
$h_i=0$ for all $i\in\calI$.   
Since $u,v\in\calC_{\mathrm{pol}}(\R^d_+)$, there are constants $m\in\N$ and $\kappa>0$ such that
\[
|u(x)|\vee|v(x)|\leq \kappa(1+|x|)^m \ \ \text{ for all }\ x\in\R^d_+. 
\] 
Let $\eps>0$ and set for $x\in\R^d_+$,  
\[\begin{gathered}
\psi(x)=(m+1)^{-1}(x_1^{m+1}+\cdots+x_d^{m+1})+\Ir\cdot x, \\
u_\eps(x)=u(x)-\eps(C+\psi(x)) \ 
\ \text{ and }\ \ v_\eps(x)=v(x)+\eps(C+\psi(x)),  
\end{gathered}
\]
where $C$ is a constant to be fixed soon. 

Note that $\psi\in \calC^2(\R^d_+)$,  
\begin{equation}\label{comp-4}
\lim_{|x|\to\infty} u_\eps(x)=-\infty \ \ \text{ and } \ \ 
\lim_{|x|\to\infty}v_\eps(x)=\infty,
\end{equation}
and observe that for some constants $C_i>0$, with $i=1,2,3$, and  for all 
$x\in\R^d_+$,
\[
|D\psi(x)|\leq C_1|x|^{m}, \ \ 
|\calL \psi(x)|\leq C_2(1+|x|)^{m}  \ \ 
\text{ and } \ \ 
|\calL \psi(x)|\leq C_3+\psi(x), 
\]
where we have used the fact that $b$ is bounded on $\R^d_+$.  

Now, we fix $C=C_3$. Let $x\in\R^d_+$ and 
$(p,X)\in J^{2,+}u_\eps(x)$. See \cite{crandall-ishii-lions} 
for the definition of semi-jets $J^{2,\pm}$.  
Setting $(q,Y):=(p,X)+\eps(D\psi(x),D^2\psi(x))\in J^{2,+}u(x)$, 
we compute that  
\begin{equation}\label{comp-1}
\begin{aligned}
\Tr AX&+b(x)\cdot p-u_\eps(x)+\ell(x) 
\\&\geq \Tr AY+b(x)\cdot q-u(x)+\ell(x)-\eps|\calL \psi(x)|+\eps (C+\psi(x))
\\&\geq \Tr AY+b(x)\cdot q-u(x)+\ell(x). 
\end{aligned}
\end{equation} 
Recalling that 
\begin{align*}\calH_i(p) = p_i - \al_i \max_{j\neq i}\, p_j^+ \quad 
\text{ for all }\, i\in\calI,
\end{align*} 
and noting that for any $i\in\calI$, 
\[
D_i\psi(x)=x_i^{m}-1 \ \ \text{ for all }\ x\in\R^d_+,
\] 
where $D_i=\partial/\partial x_i$, we see that if $x_i=0$, then   
\[\begin{aligned}
\calH_i(p) &=  q_i +\eps- 
\al_i \max_{j\neq i}\,\big(q_j +\eps (-x_j^m+1)\big)^+
\\
&\geq q_i+\eps(1-\al_i) - \al_i \max_{j\neq i} q_j^+
=\calH_i(q)+\eps(1-\al_i).
\end{aligned}
\]
We thus deduce that $w:=u_\eps$ is a viscosity subsolution to 
\begin{equation}\label{comp-2}
\begin{cases}
\calL w-w+\ell=0 \ \ \text{ in } \ (\R^d_+)^\circ, &\\[3pt]
\calH_i^-(Dw(x))=0 \ \ \text{ if }\ x \in \partial \R^d_+, \ i\in\I(x),  
\end{cases}
\end{equation}
where $\calH_i^-(p):=\calH_i(p)-\eps(1-\al_i)$. 

Similarly, we see that $w:=v_\eps$ is a viscosity supersolution to 
\begin{equation}\label{comp-3}
\begin{cases}
\calL w-w+\ell=0 \ \ \text{ in } \ (\R^d_+)^\circ, &\\[3pt]
\calH_i^+(Dw(x))=0 \ \ \text{ if }\ x \in \partial \R^d_+, \ i\in\I(x),  
\end{cases}
\end{equation}
where $\calH_i^+(p):=\calH_i(p)+\eps(1-\al_i)$.

It is enough to show that for any $\eps>0$, we have $u_\eps\leq v_\eps$ 
on $\R^d_+$.

To do this, we fix $\eps>0$ and suppose to the contrary that 
$\sup{(u_\eps-v_\eps)}>0$.  
Let $\ph\in \calC^{1,1}(\R^d)$ be the test function given by Theorem 
\ref{test-f}. For any $k\in\N$, consider the function 
\[
\Phi(x,y)=u_\eps(x)-v_\eps(y)-k\ph(x-y) \ \ \text{ on } (x,y)\in\left(\R^d_+\right)^2,
\]
and let $(x_k,y_k)$ be a maximum point of this function $\Phi$. Note that 
the existence of such a maximum point is ensured by \eqref{comp-4}. 

\def\II{\mathbb{I}}

Since $\ph\geq 0$ is globally $\calC^{1,1}$  on $\R^d$, we find that 
for all $(x,y)\in \left(\R^d_+\right)^2$ and some constant $C>0$, 
\[
\ph(x-y)\leq \ph(x_k-y_k)+D\ph(x_k-y_k)\cdot(x-y-(x_k-y_k)) 
+C|x-y-(x_k-y_k)|^2.
\]
It is standard to see (see \cite{crandall-ishii-lions} for instance) 
that 
\begin{equation}\label{comp-5}
\lim_{k\to \infty} k\ph(x_k-y_k)=0 
\end{equation}
and that there are symmetric matrices $X_k,\,Y_k\in\mat$ such that 
\begin{equation}\label{comp-6}
\begin{pmatrix} X_k&0 \\ 0&-Y_k \end{pmatrix}
\leq 6Ck\begin{pmatrix}\II&-\II \\ -\II&\II \end{pmatrix},
\end{equation} 
and 
\begin{equation}\label{comp-7}
(p_k,X_k)\in\bar J^{2,+}u_\eps(x_k) \ \ \text{ and } \ \  
(p_k,Y_k)\in\bar J^{2,-}v_\eps(y_k),
\end{equation}
where $p_k=k D\ph(x_k-y_k)$ and $\bar J^{2,\pm}$ denotes the ``closure'' 
(see \cite{crandall-ishii-lions}) of the semi-jets 
$J^{2,\pm}$.  
The inequality above implies that $X_k\leq Y_k$. 

Let $x_k=(x_{k1},\ldots,x_{kd})$ and $y_k=(y_{k1},\ldots,y_{kd})$. 
If $x_{ki}=0$ for some $i\in\calI$, then $x_{ki}-y_{ki}\leq 0$ 
and hence
\[
\calH_i^-(p_k)=k\calH_i(D\ph(x_k-y_k))-\eps(1-\al_i)\leq 
-\eps(1-\al_i)<0. 
\]
Thus, by the viscosity property of $u_\eps$, we see that 
\[
\Tr AX_k +b(x_k)\cdot p_k -u_\eps(x_k)+\ell(x_k)\geq 0. 
\]
Similarly, we obtain 
\[
\Tr A Y_k+b(y_k)\cdot p_k-v_\eps(y_k)+\ell(y_k)\leq 0.
\]
Combining these two, we get 
\begin{equation}\label{comp-8}\begin{aligned}
u_\eps(x_k)-v_\eps(y_k)
&\leq \Tr A(X_k-Y_k)+(b(x_k)-b(y_k))\cdot p_k +\ell(x_k)-\ell(y_k)
\\&\leq K|x_k-y_k||p_k|+\ell(x_k)-\ell(y_k),  
\end{aligned}
\end{equation}
where $K$ is a Lipschitz bound of the function $b$. 
Furthermore, observe that $\{x_k,y_k\}\subset \big(\R^d_+\big)^2$ is a bounded 
sequence by \eqref{comp-4} and in view of the homogeneity of $\ph$ that 
for all $z\in\R^d$ and for some constant $C_4>0$,
\[
|z||D\ph(z)|\leq C_4\ph(z), 
\]
and, consequently, 
\[
|x_k-y_k||p_k|\leq C_4k\ph(x_k-y_k).
\]
Thus, from \eqref{comp-8} and \eqref{comp-5}, we infer that
\[\begin{aligned}
0&<\sup_{\R^d_+}(u_\eps-v_\eps)\leq u_\eps(x_k)-v_\eps(y_k)-k\ph(x_k-y_k)
\\&\leq K|x_k-y_k||p_k|+\ell(x_k)-\ell(y_k) \to 0 \ \ \text{ as } \ k\to \infty, 
\end{aligned}
\]
which is a contradiction, and conclude that $u_\eps\leq v_\eps$ on $\R^d_+$.  
\end{proof}

\begin{remark}
The above proof also works if we replace the operator $\calL V- \beta V + \ell$ by a general nonlinear operator
satisfying some standard condition. One may look at 
\cite[Conditions (0.1)-(0.3)]{crandall-ishii-lions},\cite[Condition~(1.3)]{dupuis-ishii-90}, \cite[Condition~(1.4)]{dupuis-ishii} for
general form of such operators.
\end{remark}

%
%
%

\section{Construction of a test function} \label{const-tf}
We now give the proof of Theorem \ref{test-f}. That is, we
construct a test function in the class $\calC^{1,1}(\R^d)$, which is 
a main step in establishing the comparison principle. 

When $d=1$, the function $\ph(x)=x^2$ obviously has the properties of test function
stated in Theorem \ref{test-f}.  We may thus assume in this section that $d>1$.

\subsection{A convex body} 
We set
$$
\Xi^d:=\{ {\xi}=(\xi_1,\ldots,\xi_d)\mid \xi_i=1\text{ or }\alpha_i \text{ for all }\, i\in\calI\}.
$$

For $ {\xi}=(\xi_1,\ldots,\xi_d)\in \Xi^d$, we set
\[
\xi^\#=\#\{i\in \calI\mid \xi_i=1\},
\]
where $\# A$ indicates the cardinality of the set $A$, and, for $k\in \{0,1,\ldots,d\}$,
\[
\Xi^d_k:=\{ {\xi}\in \Xi^d\mid \xi^\#=k\}.
\]
We set
\[
\Xi_*^d:=\bigcup_{k\in \calI} \Xi_k^d=\Xi^d\setminus\{(\al_1,\ldots,\al_d)\},
\]
and note
$$\#\Xi^d_k={d \choose k},\quad \#\Xi_*^d=2^d-1. $$
Let
\[
1=\theta_1<\cdots <\theta_d,
\]
and assume that
\begin{equation}\label{a-15}
\theta_1>\frac{d-1+\max_{i\in\calI}\al_i}{d}\,\theta_d.
\end{equation}
Note that using Condition~\ref{cond2} we have
\begin{equation}\label{a-16}
\theta_1>\frac{d-1+\max_{i\in\calI}\al_i}{d}\,\theta_d
>\frac{d\max_{i\in\calI}\al_i}{d}\,\theta_d
=\max_{i\in \calI}\al_i\,\theta_d.
\end{equation}
For $ {\xi}\in \Xi_k^d$, we define
\[
H( {\xi}):=\{{ {x}}\in\Rd\mid  {\xi}\cdot { {x}}\leq \theta_k\},
\]
and, for $\delta>0$, set
\[
H_\del^-:=\{{ {x}}\in\R^n\mid x_i\geq -\del \ \text{ for all } 1\leq i\leq d\},
\]
\[
\mathcal{S}:=\bigcap_{ {\xi}\in \Xi_*^d} H( {\xi}),
\]
and
\[
\mathcal{S}_\del:=\mathcal{S}\cap H_\del^-.
\]
Note that
\[
\partial H( {\xi})=\{{ {x}}\in\Rd\mid  {\xi}\cdot { {x}}=\theta_k\} \ \ \text{ for }  {\xi}\in \Xi_k^d.
\]

\begin{lemma} \label{lem-A3}
The set $\mathcal{S}_\del$
is a compact convex subset of $\Rd$ and is
a neighborhood of the origin.
\end{lemma}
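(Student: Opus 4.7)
The plan is to verify the three assertions about $\calS_\del$ essentially by inspection, exploiting that $\calS_\del$ is a finite intersection of closed half-spaces of $\R^d$.

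First I would note that each $\xi\in\Xi^d_*$ satisfies $\xi^\#\geq 1$, so at least one coordinate of $\xi$ equals $1$ and in particular $\xi\neq 0$; consequently every $H(\xi)$ with $\xi\in\Xi^d_*$ is a genuine closed half-space of $\R^d$. The same is clear for each slab $\{x:x_i\geq-\del\}$ entering the definition of $H^-_\del$. Being a finite intersection of closed convex sets, $\calS_\del$ is therefore closed and convex with no work required.

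For compactness I still need boundedness. Here I would use the special element $\Ir=(1,\ldots,1)\in\Xi^d_d\subset\Xi^d_*$, which yields the inequality $\sum_{i\in\calI}x_i\leq\theta_d$ on $\calS_\del$. Combined with the coordinatewise lower bounds $x_j\geq-\del$ inherited from $H^-_\del$, this gives $x_i\leq\theta_d+(d-1)\del$ for every $i\in\calI$, so $\calS_\del\subset[-\del,\,\theta_d+(d-1)\del]^d$. Closedness plus boundedness then yields compactness.

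For the neighborhood property, I would observe that the origin satisfies every defining inequality strictly: $\xi\cdot 0=0<1=\theta_1\leq\theta_k$ for each $\xi\in\Xi^d_k\subset\Xi^d_*$, and $0>-\del$. Because $\Xi^d_*$ is a finite set and the functionals $x\mapsto\xi\cdot x$ and $x\mapsto x_i$ are continuous, a sufficiently small open ball about the origin lies inside each $H(\xi)$ and inside $H^-_\del$, and hence inside $\calS_\del$. I do not anticipate any obstacle here; the spacing condition \eqref{a-15} on the $\theta_k$'s plays no role in this particular lemma and will presumably be invoked only later, when the outward-normal structure of the faces of $\calS_\del$ must be matched against the Hamiltonians $\calH_i$ to produce the test function of Theorem \ref{test-f}.
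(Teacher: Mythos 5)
Your proof is correct and follows essentially the same route as the paper: closedness and convexity from a finite intersection of closed half-spaces/slabs, boundedness from $\calS_\del\subset H^-_\del\cap H(\Ir)$, and the neighborhood property from the origin strictly satisfying all the finitely many defining inequalities. The only cosmetic difference is that the paper makes the neighborhood step quantitative by taking $\rho=\del\wedge\min_k\min_{\xi\in\Xi^d_k}\theta_k/|\xi|$ and showing $B_\rho(0)\subset\calS_\del$, whereas you invoke continuity and finiteness without writing down the radius; both are fine, and you are right that \eqref{a-15} is not needed here.
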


\begin{proof} It is clear that $\mathcal{S}_\del$ is a closed convex subset of $\Rd$.
Observe that
\[
\mathcal{S}_\delta\subset H_\del^-\cap H(\mathbf{1}),
\]
and $H_\del^-\cap H(\mathbf{1})$ is bounded in $\Rd$.
Hence, $\mathcal{S}_\del$ is a compact subset of $\Rd$.

The distance from the origin to the hyperplane
\[
\partial H( {\xi})=\{{ {x}}\in\Rd\mid  {\xi}\cdot { {x}}=\theta_k\},
\]
with $ {\xi}\in \Xi_k^d$, is given by
\[
\theta_k/\abs{\xi}.
\]
Hence, 
setting
\[
\rho:=\del\wedge \min\{\theta_k/\abs{\xi}\mid  {\xi}\in \Xi_k^d,\, k\in\calI\}\ (\leq\del),
\]
we find that
\[
B_\rho(0)\subset \mathcal{S}_\del.
\]
Thus, $\mathcal{S}_\del$ is a neighborhood of the origin.
\end{proof}

Let ${ {z}}\in\partial \mathcal{S}_\del$ and let
$N({ {z}},\mathcal{S}_\del)$ denote the normal cone of $\mathcal{S}_\del$ at ${ {z}}$, that is, 
\begin{equation}\label{0000}
N(z,\mathcal{S}_\del):=\{ p\in\Rd\mid  
p\cdot({ {x}}-{ {z}})\leq 0 \ \text{ for all }{ {x}}\in \mathcal{S}_\del\}.
\end{equation}
Set
\[
J_z:=\{i\in\calI\mid z_i=-\del\},
\]
and
\[
\Xi_z:=\{ {\xi}\in \Xi_*^d\mid { {z}}\in\partial H( {\xi})\}.
\]

According to \cite[Corollary 23.8.1]{rock}, we have 

\begin{lemma} \label{lem-A4}
We have
\[
N({ {z}},\mathcal{S}_\del)=\mathrm{conical\ hull}\, \big(\Xi_z\cup \{-{ {e}_j}\mid j\in J_z \}\big)
=\sum_{j\in J_z}\R_+(-{ {e}_j})+\R_+\, \co \Xi_z,
\]
where $\co \Xi_z$ denotes the convex hull of $\Xi_z$.
\end{lemma}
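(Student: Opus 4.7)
\emph{Proof plan.} The set $\mathcal{S}_\del$ is polyhedral: it is the intersection of the closed half-spaces $H(\xi)$ for $\xi\in\Xi_*^d$ together with $\{x\in\R^d\mid x_j\geq -\del\}$ for $j\in\calI$. The plan is to compute $N({ {z}},\mathcal{S}_\del)$ by applying the additivity rule for normal cones of intersections of convex sets --- the content of \cite[Corollary~23.8.1]{rock} cited in the lemma --- to this presentation, and then to identify the nontrivial summands.

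First, I would verify the constraint qualification required for that additivity, namely that the relative interiors of the defining half-spaces share a common point. Indeed, the origin $0$ lies in the interior of every one of them: for $\xi\in\Xi_k^d$ with $k\in\calI$, we have $\xi\cdot 0=0<\theta_k$ (since $\theta_k\geq\theta_1=1$), and $0_j=0>-\del$ for each $j\in\calI$. Hence Rockafellar's corollary applies and gives
\[
N({ {z}},\mathcal{S}_\del)=\sum_{\xi\in\Xi_*^d}N({ {z}},H(\xi))+\sum_{j\in\calI}N\bigl({ {z}},\{x\in\R^d\mid x_j\geq-\del\}\bigr).
\]

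Second, I would evaluate each summand using the elementary fact that, for a closed half-space $\{x\mid \eta\cdot x\leq c\}$ and a boundary point $z$ of it, the normal cone at $z$ is $\R_+\eta$, while it reduces to $\{0\}$ when $z$ lies strictly inside. Consequently $N({ {z}},H(\xi))=\R_+\xi$ exactly when $\xi\in\Xi_z$ and is trivial otherwise, and $N({ {z}},\{x_j\geq-\del\})=\R_+(-e_j)$ exactly when $j\in J_z$. Substituting and dropping the trivial terms yields
\[
N({ {z}},\mathcal{S}_\del)=\sum_{\xi\in\Xi_z}\R_+\xi+\sum_{j\in J_z}\R_+(-e_j),
\]
which is by definition the conical hull of $\Xi_z\cup\{-e_j\mid j\in J_z\}$; this is the first equality in the statement.

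Third, the second equality is the standard identity that for any nonempty finite set $A\subset\R^d$,
\[
\Big\{\textstyle\sum_{i}\lam_i a_i\,\Big|\,\lam_i\geq 0,\ a_i\in A\Big\}=\R_+\co(A),
\]
obtained by factoring out the total weight $\sum_i\lam_i$ as a nonnegative scalar and normalizing the $\lam_i$ to a convex combination. Applying this to $A=\Xi_z$ rewrites $\sum_{\xi\in\Xi_z}\R_+\xi$ as $\R_+\co\Xi_z$, while the $-e_j$ contributions are already in the stated form. The only nonroutine step is noting the common interior point $0$ to justify the constraint qualification; once that is in place, the remainder is bookkeeping on which constraints are active at ${ {z}}$.
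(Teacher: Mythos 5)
Your proof is correct and follows exactly the route the paper intends: the paper simply cites Rockafellar's Corollary~23.8.1, and your argument is the careful unpacking of that citation — presenting $\mathcal{S}_\del$ as a finite intersection of closed half-spaces, verifying the constraint qualification via the common interior point $0$ (using $\theta_k\geq\theta_1=1>0$ and $\del>0$), identifying the nontrivial summand normal cones with the active constraints $\Xi_z$ and $J_z$, and recasting $\sum_{\xi\in\Xi_z}\R_+\xi$ as $\R_+\co\Xi_z$. Nothing is missing beyond the degenerate bookkeeping for $\Xi_z=\emptyset$ (where both sides should be read with the convention that the empty contribution is $\{0\}$), which the paper also glosses over.
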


For the time being we examine the normal cone $N({ {z}},\mathcal{S})$ at ${ {z}}$ for
${ {z}}\in\partial \mathcal{S}\cap \partial\R_+^d$.
Note that
\[\partial\R_+^d=\bigcup_{i\in \calI} F_i,\]
where $\,F_i:=\{{ {x}}\in\R_+^d\mid x_i=0\}$,
and therefore
\[
N({ {z}},\mathcal{S})=\R_+\,\co \Xi_z.
\]

\begin{lemma}\label{lem-A5}
Assume that ${ {z}}\in F_i\cap\partial \mathcal{S}$. For any $
 {\xi}\in \co \Xi_z$,
we have
\[
\xi_i=\al_i \ \ \text{ and } \ \ \max_{j\in\calI}\xi_j=1.
\]
\end{lemma}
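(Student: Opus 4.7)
The proof splits into two independent parts.

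\emph{First, $\xi_i = \alpha_i$ on $\co\Xi_z$.} It suffices to verify this on each $\xi\in\Xi_z$, since convex combinations preserve the $i$-th coordinate. Fix $\xi\in\Xi^d_k\cap\Xi_z$ (so $\xi\cdot z=\theta_k$ with $k\geq 1$) and suppose for contradiction $\xi_i=1$. If $k\geq 2$, let $\xi'$ be obtained by switching the $i$-th coordinate to $\alpha_i$; then $\xi'\in\Xi^d_{k-1}\subset\Xi^d_*$, and since $z_i=0$ we have $\xi'\cdot z=\xi\cdot z=\theta_k>\theta_{k-1}$, contradicting $z\in H(\xi')$. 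If $k=1$, then $\xi$ has its sole $1$ at position $i$ and $\sum_{j\neq i}\alpha_j z_j=1$; testing against each $\tilde\xi^{(j)}\in\Xi^d_1$ with $1$ at $j\neq i$ gives $\tilde\xi^{(j)}\cdot z=1+(1-\alpha_j)z_j\leq 1$, forcing $z_j=0$, which together with $z_i=0$ yields $z=0$ and so $\xi\cdot z=0\neq 1$, a contradiction.

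\emph{Second, $\max_j\xi_j=1$ on $\co\Xi_z$.} Setting $J_l:=\{j:\xi^{(l)}_j=1\}\subset\calI\setminus\{i\}$ for $\xi^{(l)}\in\Xi_z$ (by Part 1), this reduces to showing $\bigcap_l J_l\neq\emptyset$, since a positively weighted convex combination attains value $1$ in coordinate $j$ exactly when $\xi^{(l)}_j=1$ for every $l$. Suppose for contradiction $\bigcap_l J_l=\emptyset$. For each $j\in\calI\setminus\{i\}$ pick $\xi^{(l_j)}\in\Xi_z$ with $j\notin J_{l_j}$, and form $\Xi':=\{\xi^{(l_j)}\}$, of cardinality $2\leq m'\leq d-1$ ($m'\geq 2$ because $m'=1$ would give $J_{l_1}=\emptyset$, contradicting $k_{l_1}\geq 1$), and with $\bigcap_{\Xi'}J_l=\emptyset$ by construction.

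The core computation is as follows. Set $u_j:=(1-\alpha_j)z_j\geq 0$. Using $z_i=0$, the equality $\xi^{(l)}\cdot z=\theta_{k_l}$ reads $\sum_{j\in J_l}u_j=\theta_{k_l}-\alpha\cdot z$ for $\xi^{(l)}\in\Xi'$, while $z\in H(\xi^J)$ gives $\sum_{j\in J}u_j\leq\theta_{|J|}-\alpha\cdot z$ for any non-empty $J\subset\calI\setminus\{i\}$. Summing the equalities over $\Xi'$, setting $m_j:=\#\{l:j\in J_l,\xi^{(l)}\in\Xi'\}$ (so $m_j\leq m'-1$ from $\bigcap_{\Xi'}J_l=\emptyset$), and applying the constraint to $J^\vee:=\bigcup_{\Xi'}J_l\subset\calI\setminus\{i\}$ gives, after rearrangement,
\[
\alpha\cdot z\;\geq\;\sum_{\xi^{(l)}\in\Xi'}\theta_{k_l}-(m'-1)\theta_{|J^\vee|}\;\geq\;m'-(m'-1)\theta_{d-1},
\]
using $\theta_{k_l}\geq\theta_1=1$ and $\theta_{|J^\vee|}\leq\theta_{d-1}$. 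Meanwhile, the constraint from $(\alpha_i,1,\ldots,1)\in\Xi^d_{d-1}$ together with $z_i=0$ gives $\alpha\cdot z\leq\max_i\alpha_i\cdot\theta_{d-1}$. Combining these forces $\theta_{d-1}\geq m'/(m'-1+\max_i\alpha_i)$, whereas Condition~\eqref{a-15} yields $\theta_{d-1}\leq\theta_d<d/(d-1+\max_i\alpha_i)$. Since $k\mapsto k/(k-1+\max_i\alpha_i)$ is strictly decreasing (derivative $(\max_i\alpha_i-1)/(k-1+\max_i\alpha_i)^2<0$) and $m'\leq d-1<d$, we get $m'/(m'-1+\max_i\alpha_i)>d/(d-1+\max_i\alpha_i)>\theta_{d-1}$, a contradiction.

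The hard part is the second assertion: a pairwise argument disposes only of the case $J_1\cap J_2=\emptyset$, and the genuine difficulty is the scenario where pairwise intersections are non-empty yet the total intersection is empty. The combinatorial reduction to $|\Xi'|\leq d-1$ is essential because Condition~\eqref{a-15}—which is precisely tailored to this cardinality—is what finally produces the contradiction.
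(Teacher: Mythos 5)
Your proof is correct, and for the second assertion it takes a genuinely different route from the paper's. For the first assertion ($\xi_i=\al_i$) the two arguments agree in the case $k\geq 2$ (flip coordinate $i$ of $\bar\xi$ to $\al_i$ and compare $\theta_{k-1}$ with $\theta_k$), but for $k=1$ the paper derives $\theta_1\leq\max_i\al_i\,\theta_d$ and invokes \eqref{a-16}, whereas you instead test against every $\tilde\xi^{(j)}\in\Xi^d_1$ and, using $z\in\R^d_+$, force $z=0$ and hence $\xi\cdot z\neq\theta_1$; both are valid, yours bypasses \eqref{a-16} entirely in this subcase. The real divergence is in the second assertion ($\max_j\xi_j=1$). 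The paper's proof assumes $\max_j\xi_j<1$ for a fixed $\xi=\sum_l\lam_l\xi^l$, picks for each coordinate $i$ an index $j_i$ with $\xi^{j_i}_i=\al_i$, arranges these among the first $d$ elements, and then averages: $\frac1d\sum_{l\leq d}\xi^l$ dominates $\theta_1$ after pairing with $z$ from the membership identities, while each coordinate of the average is $\leq\frac{(d-1)+\max\al}{d}$, so pairing with $\mathbf 1\cdot z\leq\theta_d$ contradicts \eqref{a-15}. You instead reformulate the claim as $\bigcap_{\xi^{(l)}\in\Xi_z}J_l\neq\emptyset$, assume emptiness, extract a minimal obstructing subfamily $\Xi'$ of size $m'\in\{2,\ldots,d-1\}$, and, by summing the exact identities $\sum_{j\in J_l}u_j=\theta_{k_l}-\al\cdot z$ against the single constraint on $J^\vee$, arrive at the sharper inequality $\theta_{d-1}\geq m'/(m'-1+\max_i\al_i)$, which \eqref{a-15} and the monotonicity of $k\mapsto k/(k-1+\max\al)$ rule out. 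Your bookkeeping of $m_j\leq m'-1$, the nonemptiness of each $J_l$ (via $\Xi_z\subset\Xi^d_*$ and Part 1), and $J^\vee\subset\calI\setminus\{i\}$ all check out. What the paper's averaging buys is brevity and a fixed-cardinality argument; what yours buys is a structural description of exactly how a violation would have to arise (an obstructing family of bounded size) and a quantitatively tighter estimate via $\theta_{d-1}$ rather than $\theta_d$. One small inaccuracy in your closing remark: since $k\mapsto k/(k-1+\max\al)$ is strictly decreasing, $m'\leq d$ would already produce the contradiction once you use the strict inequality $\theta_{d-1}<\theta_d$, so the bound $m'\leq d-1$ is convenient rather than essential; what \emph{is} essential is that $m'$ be controlled by $d$.
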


\begin{proof} For notational convenience, we treat only the case when $i=d$.
Let $ {\xi}\in\co\Xi_z$. In order to show that $\xi_d=\al_d$, we suppose
to the contrary that $\xi_d\not=\al_d$. This implies that there exists
$\bar{ {\xi}}\in \Xi_z$ such that $\bar\xi_d=1$.  We select $k\in\calI$ so that
$\bar{ {\xi}}\in \Xi_k^d$.
Set
\[
\hat{ {\xi}}=(\bar\xi_1,\ldots,\bar\xi_{d-1},\al_d).
\]
First we consider the case when $k>1$. Note that $\hat{ {\xi}}\in \Xi_{k-1}^d$
and that
\[
\hat{ {\xi}}\cdot { {z}}\leq \theta_{k-1}\ \ \text{ and } \ \  \bar{ {\xi}}\cdot { {z}}=\theta_k.
\]
The last two relations are contradicting, since $\theta_{k-1}<\theta_k$ and
\[
\hat{ {\xi}}\cdot { {z}}=\hat{{\xi}}\cdot (z_1,\ldots,z_{n-1},0)
=\bar{ {\xi}}\cdot (z_1,\ldots,z_{n-1},0)=\bar{{ {\xi}}}\cdot { {z}}.
\]
Next consider the case when $k=1$. Note that
\[
\hat { {\xi}}=(\al_1,\ldots,\al_d)
\]
and
\[
\theta_1=\bar{ {\xi}}\cdot { {z}}=\hat { {\xi}}\cdot { {z}}\leq \max_{i\in\calI}\al_i\, \mathbf{1}\cdot { {z}}
\leq \max_{i\in\calI}\, \al_i\, \theta_d,
\]
which contradicts \eqref{a-16}.

Next we show that
\[
\max_{i\in\calI}\xi_i=1.
\]
We argue by contradiction, and suppose that
\[
\max_{i\in\calI}\xi_i<1.
\]
Since ${ {\xi}}\in \co \Xi_z$,
there exist $\xi^1,\ldots,\xi^N\in \Xi_z$
and $\lambda_i\geq 0$, with
\[
\sum_{i=1}^N\lam_i=1 \quad \text{such that} \quad { {\xi}}=\sum_{i=1}^N \lam_i{ {\xi}}^i,
\]
where we may assume that $N\geq d$. 

Writing
\[
{ {\xi}}^i=(\xi^i_1,\ldots,\xi^i_d) \ \ \ \text{ for }i\in \{1,\ldots,N\},
\]
by the supposition that
\[
\max_{i\in\calI} \xi_i<1,
\]
we find that for each $i\in\calI$, there exists $j_i\in\{1,\ldots,N\}$ such that
\[
\xi^{j_i}_i=\al_i.
\] 
We can rearrange $(\xi^1, \lam_1), \ldots, (\xi^N, \lam_N)$ in such a way 
that  $N\geq d$ and $j_i\leq d$ for all $i\in\calI$.
For $i\in\calI$, let
$k_i\in\calI$ be such that ${ {\xi}}^{i}\in \Xi_{k_i}^d$.
Observe that for any $i\in\calI$,
\[
{ {\xi}}^i\cdot { {z}}=\theta_{k_i},
\]
and hence
\[
\frac{1}{d}\sum_{i\in\calI}{ {\xi}}^i\cdot { {z}}=\frac{1}{d}\sum_{i\in\calI}
\theta_{k_i}\geq \theta_1.
\] 
Since ${{z}}\in \mathcal{S}\cap F_d\subset H(\mathbf{1})\cap F_d$, we have
\begin{align*}
\frac{1}{d}\sum_{i\in\calI}{ {\xi}}^i\cdot { {z}}
\leq \frac{(d-1)+\max \al_i}{d}\,\mathbf{1}\cdot { {z}} 
&\leq \frac{(d-1)+\max \al_i}{d}\,\theta_d.
\end{align*}
Thus combining above two displays, we get
\[
\theta_1\leq \frac{(d-1)+\max\al_i}{d} \theta_d.
\]
This contradicts \eqref{a-15}.
\end{proof}

\begin{lemma} \label{lem-A6}
The multi-valued map ${ {z}}\mapsto \Xi_z$ is upper semicontinuous
on $\partial \mathcal{S}$. That is,
for each ${ {z}}\in\partial \mathcal{S}$ there exists $\eps>0$ such that
\[
\Xi_x\subset \Xi_z\quad \text{ for all }{ {x}}\in B_\eps({ {z}})\cap\partial \mathcal{S}.
\]
\end{lemma}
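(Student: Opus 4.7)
The plan is to reduce the upper semicontinuity to a statement about a finite family of linear inequalities, exploiting that $\Xi_*^d$ has only $2^d-1$ elements. The starting observation is simply to unpack the definition of $\Xi_z$: a vector $\xi\in\Xi_*^d$ belongs to $\Xi_z$ precisely when $\xi\cdot z=\theta_{k(\xi)}$, where $k(\xi)$ is the unique index with $\xi\in\Xi_{k(\xi)}^d$. Since $z\in\mathcal S$, we always have $\xi\cdot z\leq\theta_{k(\xi)}$, and therefore $\xi\notin\Xi_z$ is equivalent to the \emph{strict} inequality $\xi\cdot z<\theta_{k(\xi)}$.

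Next, I would fix $z\in\partial\mathcal S$ and handle each $\xi\in\Xi_*^d\setminus \Xi_z$ separately. For such a $\xi$, the continuity of the linear functional $x\mapsto \xi\cdot x$ together with the strict inequality $\xi\cdot z<\theta_{k(\xi)}$ produces a radius $\eps_\xi>0$ such that
\[
\xi\cdot x<\theta_{k(\xi)}\qquad\text{for all }x\in B_{\eps_\xi}(z).
\]
Because $\Xi_*^d\setminus \Xi_z$ is a finite set, the quantity $\eps:=\min\{\eps_\xi:\xi\in\Xi_*^d\setminus\Xi_z\}$ is strictly positive.

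I would then conclude as follows: for any $x\in B_\eps(z)$ and any $\xi\in\Xi_*^d\setminus\Xi_z$, the strict inequality $\xi\cdot x<\theta_{k(\xi)}$ persists, so $x\notin\partial H(\xi)$ and hence $\xi\notin\Xi_x$. Contrapositively $\Xi_x\subset \Xi_z$. This inclusion, valid on the whole ball $B_\eps(z)$, a fortiori holds on $B_\eps(z)\cap\partial\mathcal S$, which is the statement of the lemma. There is essentially no obstacle here: the argument is purely a finiteness/continuity one, and the restriction to $\partial\mathcal S$ is inactive (the conclusion would hold for any $x$ in a full neighborhood of $z$ in $\R^d$).
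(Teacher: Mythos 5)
Your proof is correct and follows essentially the same argument as the paper: both exploit the finiteness of $\Xi_*^d\setminus\Xi_z$ and the fact that $z$ lies strictly off each hyperplane $\partial H(\xi)$ for $\xi\notin\Xi_z$ to produce a uniform $\eps$; whether one phrases this via $\dist(z,\partial H(\xi))>0$ (as the paper does) or via persistence of the strict inequality $\xi\cdot z<\theta_{k(\xi)}$ (as you do) is only a cosmetic difference.
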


\begin{proof} Take $z\in\partial\mathcal{S}$ and put 
\[
\Xi_z^{\mathrm c}:=\Xi_*\setminus\Xi_z. 
\]
Since 
\[
z\not\in \partial H(\xi) \ \ \text{ for any }\xi\in \Xi_z^{\mathrm c}
\]
and $\#\Xi_z^{\mathrm c}<\infty$, 
there is $\eps>0$ such that 
\[
\dist(z,\partial H(\xi))>\eps \ \ \text{ for all }\xi\in\Xi_z^{\mathrm c}. 
\]
That is, we have 
\[
B_\eps(z)\cap \partial H(\xi)=\emptyset \ \ \text{ for all }\ \xi\in \Xi_z^{\mathrm c},
\]
and, consequently,
\[
\Xi_x\subset \Xi_z \ \ \text{ for all }\ x\in B_\eps(z)\cap \partial\calS. 
\]
\end{proof}

\begin{lemma} \label{lem-A7}
There exists $\del>0$ such that for each $i\in\calI$,
\[
\xi_i=\al_i \quad \text{ and } \quad  \max_{j\in\calI} \xi_j=1 \quad
\text{ for all }\
{ {\xi}} \in \co\Xi_z \ \text{ and } \
{ {z}}\in\partial \mathcal{S}\cap F_{i,\del},
\]
where $F_{i,\del}=\{{ {x}}\in \Rd\mid x_j\geq -\del \ \text{ for all }\
j\in \calI, \
|x_i|\leq \del\}$.
\end{lemma}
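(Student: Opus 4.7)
The plan is to deduce Lemma 4.7 from Lemma 4.5 together with the upper semicontinuity statement in Lemma 4.6, by a standard compactness/contradiction argument. The idea is that Lemma 4.5 already gives the desired conclusion \emph{exactly} on $F_i\cap \partial\calS$, and Lemma 4.6 says that the set-valued map $z\mapsto \Xi_z$ shrinks (only) as one approaches a given $z_\infty\in\partial\calS$; so near $F_i\cap\partial\calS$ the cone $\co\Xi_z$ is contained in $\co\Xi_{z_\infty}$ for some $z_\infty\in F_i\cap\partial\calS$, and Lemma 4.5 can be applied directly (without passing to the limit in $\xi$).

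More precisely, I would argue by contradiction. Suppose no such $\del>0$ exists. Then for each $n\in\NN$ one can find $i_n\in\calI$, $z_n\in\partial\calS\cap F_{i_n,1/n}$, and $\xi_n\in\co\Xi_{z_n}$ for which \emph{either} $(\xi_n)_{i_n}\neq\al_{i_n}$ \emph{or} $\max_{j\in\calI}(\xi_n)_j\neq 1$. Since $\calI$ is finite, passing to a subsequence I may take $i_n=i$ to be independent of $n$. For $n\geq 1$ the points $z_n$ lie in $\calS\cap F_{i,1}$, which is bounded (since $\calS\subset H(\mathbf{1})$ and $F_{i,1}$ forces all coordinates to be $\geq -1$), so after a further subsequence $z_n\to z_\infty$ for some $z_\infty\in\R^d$. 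The conditions $|z_{n,i}|\leq 1/n$ and $z_{n,j}\geq -1/n$ pass to the limit, giving $z_{\infty,i}=0$ and $z_\infty\in\R^d_+$, i.e.\ $z_\infty\in F_i$. Closedness of $\partial\calS$ then yields $z_\infty\in F_i\cap\partial\calS$.

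Now apply Lemma 4.6 at $z_\infty$: there is $\eps>0$ such that $\Xi_x\subset \Xi_{z_\infty}$ for every $x\in B_\eps(z_\infty)\cap\partial\calS$. For all sufficiently large $n$, $z_n\in B_\eps(z_\infty)\cap\partial\calS$, hence $\Xi_{z_n}\subset \Xi_{z_\infty}$ and consequently $\co\Xi_{z_n}\subset\co\Xi_{z_\infty}$. Thus $\xi_n\in\co\Xi_{z_\infty}$ for all large $n$. Since $z_\infty\in F_i\cap\partial\calS$, Lemma 4.5 applies to every element of $\co\Xi_{z_\infty}$, and therefore $(\xi_n)_i=\al_i$ and $\max_{j\in\calI}(\xi_n)_j=1$ for all large $n$, contradicting the choice of $\xi_n$. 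This contradiction proves the existence of the required $\del>0$.

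The only step that requires a moment of care is the extraction of a convergent subsequence, since $\calS$ itself is unbounded (it lives inside the half-space $H(\mathbf{1})$ but has no lower bound); this is handled by the observation that the constraint $z_n\in F_{i,1/n}$ confines $z_n$ to a bounded slab. Everything else is a direct transfer of the exact conclusion of Lemma 4.5 via the inclusion $\co\Xi_{z_n}\subset\co\Xi_{z_\infty}$ provided by Lemma 4.6, so no limiting argument on $\xi_n$ is needed and the equalities $\xi_i=\al_i$ and $\max_j\xi_j=1$ are preserved without relaxation.
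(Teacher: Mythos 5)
Your proof is correct and uses the same two ingredients as the paper's argument---Lemma \ref{lem-A5} applied at a point of $F_i\cap\partial\mathcal{S}$, transported to nearby points via the upper semicontinuity of $z\mapsto\Xi_z$ from Lemma \ref{lem-A6}---combined with compactness of $\partial\mathcal{S}\cap F_i$. The only difference is organizational: the paper runs the compactness argument directly, extracting a finite subcover of $\partial\mathcal{S}\cap F_i$ from the balls $B_{\eps_z}(z)$ given by Lemma \ref{lem-A6} to produce $\del_i$, whereas you argue by contradiction and extract a convergent subsequence $z_n\to z_\infty\in F_i\cap\partial\mathcal{S}$; these are dual forms of the same compactness principle, and your careful check that the sequence $z_n$ is confined to the bounded slab $\mathcal{S}\cap F_{i,1}$ is the correct step to justify the extraction.
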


\begin{proof}
Let ${ {z}}\in \partial \mathcal{S} \cap F_d$. By Lemma \ref{lem-A6}, there exists
$\eps:=\eps_z>0$ such that
\[
\Xi_x\subset \Xi_z \quad \text{ for all }\; { {x}}\in B_\eps({ {z}})\cap\partial \mathcal{S}.
\]
This yields
\[
\co \Xi_x \subset \co \Xi_z  \quad \text{ for all }\; { {x}}\in B_\eps({ {z}})\cap \partial \mathcal{S}.
\]
The family $\{B_{\eps}({ {z}})\}$ is an open covering of the compact set
$\partial \mathcal{S}\cap F_d$. Hence, there exists $\del_d>0$ such that
for any ${ {x}}\in \partial \mathcal{S}\cap F_{d,\del_d}$, there is
${ {z}}\in \partial \mathcal{S}\cap F_d$ such that
\[
\co \Xi_x \subset \co \Xi_z,
\]
which implies that
\[
\xi_d=\al_d \quad \text{ and } \quad \max_{i\in\calI}\xi_i=1 \quad
\text{ for all }\ { {\xi}}\in\co \Xi_x.
\]
Similarly, we may choose $\del_i>0$ for each $i\in\calI$ such that
for any ${ {x}}\in \partial \mathcal{S}\cap F_{i,\del_i}$,
\[
\xi_i=\al_i \quad \text{ and } \quad \max_{j\in\calI}\xi_j=1 \ \ \
\text{ for all }\ { {\xi}}\in\co \Xi_x.
\]
Thus, setting $\del:=\min_{i\in\calI}\del_i$, we conclude that
for any $i\in\calI$ and ${ {x}}\in \partial \mathcal{S}\cap F_{i,\del}$,
\[
\xi_i=\al_i \quad \text{ and } \quad \max_{j\in\calI}\xi_j=1 \quad
\text{ for all }\ { {\xi}}\in\co \Xi_x.
\]
\end{proof}

\subsection{Sign of $\mathcal{H}_i$}

Let us recall that for $i\in\calI$,
\begin{equation*}
\mathcal{H}_i( {p})=p_i-\alpha_i\max_{j\neq i}p_j^+.
\end{equation*}
Set
\[
E_{i,\del}=\{{ {x}}\in\Rd\mid \abs{x_i}\leq \del\} \quad \text{ for }\; i\in\calI.
\]

\begin{lemma} \label{lem-A8}
Let $\del>0$ be the constant from Lemma \ref{lem-A7}.
Let $i\in\calI$ and ${ {x}}\in \partial \mathcal{S}_\del \cap E_{i,\del}$.
Then
\[
\mathcal{H}_i({ {\gamma}})\leq 0 \quad \text{ for all } { {\gamma}}\in N({ {x}},\mathcal{S}_\del),
\]
where $N({ {x}},\mathcal{S}_\del)$ is given by \eqref{0000}.
\end{lemma}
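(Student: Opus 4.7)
The plan is to decompose $\gamma\in N(x,\calS_\del)$ via Lemma~\ref{lem-A4} as
\[
\gamma = -\sum_{j\in J_x}\lam_j e_j + \sum_{\xi\in \Xi_x}\nu_\xi\xi,\qquad \lam_j,\nu_\xi\geq 0,
\]
and set $S:=\sum_\xi\nu_\xi$. If $\Xi_x=\emptyset$ or $S=0$, then every coordinate of $\gamma$ is $\leq 0$, so $\gamma_j^+=0$ for all $j$ and $\mathcal{H}_i(\gamma)=\gamma_i\leq 0$. The real work is the case $S>0$.

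When $S>0$, $\Xi_x\neq\emptyset$ forces $x\in\partial\calS\cap F_{i,\del}$ (since $x\in\calS_\del\cap E_{i,\del}$ already gives $x_j\geq-\del$ and $|x_i|\leq \del$). Lemma~\ref{lem-A7} then applies to the convex combination $\tilde\xi := S^{-1}\sum_\xi\nu_\xi\xi \in \co\Xi_x$, yielding $\tilde\xi_i=\al_i$ and $\max_{j\in\calI}\tilde\xi_j=1$. Since $\al_i<1$, there exists $j^*\neq i$ with $\tilde\xi_{j^*}=1$; as each $\xi_{j^*}\in\{1,\al_{j^*}\}$ with $\al_{j^*}<1$, this forces $\xi_{j^*}=1$ for every $\xi$ with $\nu_\xi>0$. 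Consequently $\gamma_i=-\lam_i\Ind_{i\in J_x}+\al_i S\leq \al_i S$, and \emph{provided} $j^*\notin J_x$ we have $\gamma_{j^*}=S$, so $\mathcal{H}_i(\gamma)\leq \al_i S-\al_i S=0$, as desired.

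The main obstacle is thus the subclaim $j^*\notin J_x$. Suppose, for contradiction, that $x_{j^*}=-\del$, and choose some $\xi\in\Xi_x$ with $\nu_\xi>0$; thus $\xi\cdot x=\theta_{\xi^\#}$, $\xi_i=\al_i$, $\xi_{j^*}=1$. If $\xi^\#\geq 2$, swapping the $j^*$-coordinate of $\xi$ from $1$ to $\al_{j^*}$ produces $\xi''\in\Xi^d_{\xi^\#-1}\subset\Xi_*^d$ satisfying $\xi''\cdot x=\xi\cdot x+(1-\al_{j^*})\del>\theta_{\xi^\#}>\theta_{\xi^\#-1}$, contradicting $x\in H(\xi'')$. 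Hence $\xi^\#=1$ and $\xi\cdot x=\theta_1=1$. Expanding
\[
\xi\cdot x=\al_i x_i-\del+\sum_{k\neq i,j^*}\al_k x_k,
\]
and using $|x_i|\leq\del$, $x_k\geq-\del$, and $\mathbf{1}\cdot x\leq\theta_d$ (from $x\in\calS\subset H(\mathbf{1})$) to bound $\sum_k x_k^+\leq\theta_d+d\del$, one obtains
\[
\xi\cdot x\leq \bar\al\,\theta_d+\bigl((d+1)\bar\al-1\bigr)\del,\qquad \bar\al:=\max_{j\in\calI}\al_j.
\]
By \eqref{a-16} we have $\theta_1=1>\bar\al\theta_d$, so for $\del$ sufficiently small this upper bound is strictly less than $1$, contradicting $\xi\cdot x=1$. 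This may require shrinking the $\del$ supplied by Lemma~\ref{lem-A7}, which is harmless since that lemma remains valid for every smaller $\del$.

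In summary, the proof hinges on the separation condition \eqref{a-16}: it guarantees that a hyperplane $\{\xi\cdot y=\theta_{\xi^\#}\}$ active at a point of $E_{i,\del}\cap\calS_\del$ cannot simultaneously reach a face $\{y_{j^*}=-\del\}$ at a coordinate where $\xi_{j^*}=1$. This geometric incompatibility, combined with Lemma~\ref{lem-A7}'s structural identification of the index $j^*$, pins down the desired sign of $\mathcal{H}_i(\gamma)$ on $N(x,\calS_\del)$.
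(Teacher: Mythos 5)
Your proof is correct, and the overall skeleton (decompose $\gamma$ via Lemma~\ref{lem-A4}, isolate the convex-combination part $\tilde\xi \in \co\Xi_x$, invoke Lemma~\ref{lem-A7} to get $\tilde\xi_i=\al_i$ and $\max_j\tilde\xi_j=1$, then read off the sign of $\calH_i$) is the same as the paper's. The significant difference is your treatment of the ``main obstacle,'' the subclaim $j^*\notin J_x$. You establish this by an ad hoc geometric argument that essentially re-runs the proof of Lemma~\ref{lem-A5} (swapping a coordinate of $\xi$ from $1$ to $\al_{j^*}$, using \eqref{a-16}, and shrinking $\del$). This is unnecessary: Lemma~\ref{lem-A7} applies at \emph{every} index $j\in J_x$, not just at $i$, because $x_j=-\del$ and $x\in H_\del^-$ give $x\in F_{j,\del}$, and $\Xi_x\neq\emptyset$ gives $x\in\partial\calS$. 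Hence $\tilde\xi_j=\al_j<1$ for all $j\in J_x$, so the index $j^*$ where $\tilde\xi_{j^*}=1$ automatically lies outside $J_x\cup\{i\}$ with no extra work and no re-choosing of $\del$. That is what the paper does, and it also lets the paper compute $\max_{j\neq i}\gamma_j^+=\lam$ \emph{exactly} (not just a lower bound), so the two cases $i\in J_x$, $i\notin J_x$ close in one line each. Your route reaches the same conclusion but trades a one-sentence application of Lemma~\ref{lem-A7} for a page of separation estimates and a silent re-parametrisation of $\del$; you would want to note explicitly that the other lemmas depending on the ``constant $\del$ from Lemma~\ref{lem-A7}'' survive the shrinking, though in this paper they do.
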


\begin{proof}
We set $J:=\{j\in\calI\mid x_j=-\del\}$. Note that $J$ or $\Xi_x$ may be empty set.
We note that
\[
N({ {x}},\mathcal{S}_\del)=\sum_{j\in J}-\R_+ { {e}_j} + \R_+\, \co \Xi_x.
\]
Let ${ {\gamma}}\in N({ {x}},\mathcal{S}_\del)$. We choose $\lam_j\geq 0$, $\lam\geq 0$, and  $\xi=(\xi_1,\ldots,\xi_d)\in \co \Xi_x$
so that
\[
\gamma=\sum_{j\in J}-\lam_j e_j+\lam { {\xi}}.
\]
By Lemma \ref{lem-A7}, we have
\[
\xi_j=\al_j \quad \text{ for all }\; j\in J,
\]
and
\[
\xi_i=\al_i \quad\text{ and } \quad \max_{j\in \calI}\xi_j=1.
\]
In particular, we have $\xi_k=1$ for some $k\not\in J\cup \{i\}$.
Thus, we have
\[
{ {\gamma}}=\sum_{j\in J}(\lam \al_j-\lam_j){ {e}_j} + \lam  {e}_k
+\sum_{l\in\calI\setminus (J\cup\{k\})}
\lam \xi_l  {e}_l.
\]
Now, if $i\in J$, then
\[
\mathcal{H}_i({ {\gamma}}) =\lam\al_i-\lam_i-\al_i\lam=-\lam_i\leq 0,
\]
and, if  $i\not\in J$, then
$$
\mathcal{H}_i({ {\gamma}})=\lam \al_i-\al_i\lam=0.$$
\end{proof}

Set
\[
G_i=\{{ {x}}\in\Rd\mid x_i>-\del\}.
\]

\begin{lemma} \label{lem-A9}
Let $\del>0$ be as before. Let $i\in\calI$ and ${ {x}}\in \partial \mathcal{S}_\del \cap G_i$.
Then
\[
\mathcal{H}_i({ {\gamma}})\geq 0 \quad  \text{ for all }\quad { {\gamma}}\in N( {x},\mathcal{S}_\del).
\]
\end{lemma}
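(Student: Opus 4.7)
The plan is to unpack $\gamma\in N(x,\mathcal{S}_\delta)$ using Lemma \ref{lem-A4} and compute $\mathcal{H}_i(\gamma)$ directly, without invoking the more delicate Lemma \ref{lem-A7}. The crucial geometric input is merely that $x_i>-\delta$ forces $i\notin J_x$, together with the elementary fact that every $\xi\in\co\,\Xi^d$ satisfies $\xi_i\in[\alpha_i,1]$.

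First, by Lemma \ref{lem-A4} write
\[
\gamma \;=\; -\sum_{j\in J_x}\lambda_j e_j \;+\; \lambda\,\xi,
\qquad \lambda_j\geq 0,\ \lambda\geq 0,\ \xi\in\co\,\Xi_x
\]
(with the $\xi$-term absent if $\Xi_x=\emptyset$, in which case the argument below is trivial since then $\gamma_i=0$ and $\gamma_j\leq 0$ for all $j$, giving $\mathcal{H}_i(\gamma)=0$). Since $x\in G_i$ means $x_i>-\delta$, we have $i\notin J_x$, so
\[
\gamma_i \;=\; \lambda\,\xi_i,\qquad \gamma_j\;\leq\;\lambda\,\xi_j\ \ \text{for every }j\neq i,
\]
the latter because $\gamma_j=\lambda\xi_j-\lambda_j\mathbf{1}_{j\in J_x}$ and $\lambda_j\geq 0$.

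Next, observe that each $\eta\in\Xi^d$ has coordinates in $\{1,\alpha_k\}\subset[\alpha_k,1]\subset[0,1]$, so taking convex combinations gives $\xi_i\geq\alpha_i$ and $\xi_j\leq 1$ for all $j\in\calI$. In particular $\xi_j\geq 0$, so $\gamma_j^+\leq(\lambda\xi_j)^+=\lambda\xi_j\leq\lambda$. Inserting these bounds into the definition of $\mathcal{H}_i$ yields
\[
\mathcal{H}_i(\gamma)
\;=\;\gamma_i-\alpha_i\max_{j\neq i}\gamma_j^+
\;\geq\;\lambda\xi_i-\alpha_i\lambda\max_{j\neq i}\xi_j
\;\geq\;\lambda(\xi_i-\alpha_i)
\;\geq\;0,
\]
which is the desired conclusion.

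There is no substantial obstacle: the estimate is a one-line consequence of the representation formula for $N(x,\mathcal{S}_\delta)$ and the trivial lower bound $\xi_i\geq\alpha_i$. The contrast with Lemma \ref{lem-A8} is that there one needed $\xi_i=\alpha_i$ (equality) and $\max_{j\neq i}\xi_j=1$, which demanded the refined structural information from Lemma \ref{lem-A7}; here the one-sided inequality $\xi_i\geq\alpha_i$ is all that is required, and it holds uniformly on $\co\,\Xi^d$.
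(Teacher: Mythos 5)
Your proof is correct and follows essentially the same route as the paper: decompose $\gamma$ via Lemma \ref{lem-A4}, use $i\notin J_x$ to isolate $\gamma_i=\lambda\xi_i$, and then apply the elementary bounds $\alpha_k\leq\xi_k\leq 1$ for $\xi\in\co\Xi_x$ to conclude $\mathcal{H}_i(\gamma)\geq 0$. Your remark that Lemma \ref{lem-A7} is not needed here (in contrast with Lemma \ref{lem-A8}) also matches the paper, whose proof of this lemma likewise relies only on the crude two-sided bound rather than the refined structure.
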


\begin{proof}
Set $J:=\{j\in\calI\mid x_j=-\del\}$. Note that $J$ may be an empty set
and that, since ${ {x}}\in G_i$, we have $i\not\in J$.
Let ${ {\gamma}}\in N({ {x}},\mathcal{S}_\del)$, and recall that
\[
N({ {x}},\mathcal{S}_\del)=\sum_{j\in J}\R_+(-{ {e}_j})+\R_+ \co \Xi_x.
\]
Choose $\lam_j\geq 0$, $\lam\geq 0$, and ${ {\xi}}\in \co \Xi_x$ so that
\[
{ {\gamma}}=\sum_{j\in J}-\lam_j{ {e}_j}+\lam { {\xi}},
\]
and observe that
\[
{ {\gamma}}=\sum_{j\in J}(\lam \xi_j-\lam_j){ {e}_j}
+\sum_{k\in \calI\setminus J}\lam {{\xi}}_k {e}_k
\]
and that
\[
\al_k\leq {{\xi}}_k\leq 1 \quad \text{ for all }k\in\calI.
\]
Hence,
\[
\mathcal{H}_i({ {\gamma}})=\gamma_i-\al_i\max_{k\neq i}\gamma_k^+
\geq \lam{{\xi}}_i-\al_i\max_{k\neq i}\lam \xi_k
\geq \lam \al_i-\al_i\lam\max_{k\neq i}\xi_k
\geq 0.
\]
\end{proof}

\begin{remark}
Combining Lemmas \ref{lem-A8} and \ref{lem-A9},
we see that for any ${ {x}}\in \partial \mathcal{S}_\del$, if $\abs{x_i}<\del$ for some $i$, then
\[
\mathcal{H}_i({ {\gamma}})=0 \quad \text{ for all }\; {\gamma}\in N({ {x}},\mathcal{S}_\del).
\]
\end{remark}

\subsection{Construction of $\varphi( {x})$}

For $0<\eps<\del$, 
let
\begin{equation*}
\mathcal{S}^\eps_\delta:=\left\{ {x}\in \Rd\ :\ \text{dist}\left( {x},
\mathcal{S}_\delta\right)\leq \eps\right\},
\end{equation*}
where $\delta$ is the constant from Lemma \ref{lem-A7}. It is clear that $\mathcal{S}^\eps_\delta$ satisfies the uniform interior sphere condition. Thus we have the following lemma.
\begin{lemma}\label{lem-A10}
The boundary $\partial\mathcal{S}^\eps_\delta$  is locally represented as the graph of a $\calC^{1,1}$ function.
\end{lemma}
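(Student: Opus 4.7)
My plan is to exhibit $\partial\mathcal{S}^\eps_\delta$ as a non-singular level set of a globally $\calC^{1,1}$ function, and then invoke the implicit function theorem to produce the local graph representation. Set $d(x):=\dist(x,\mathcal{S}_\delta)$, so that $\mathcal{S}^\eps_\delta=\{d\leq\eps\}$ and $\partial\mathcal{S}^\eps_\delta=\{d=\eps\}$. Since $\mathcal{S}_\delta$ is closed and convex by Lemma \ref{lem-A3}, the metric projection $\pi:\Rd\to\mathcal{S}_\delta$ is single-valued and $1$-Lipschitz; this is the one standard convex-analytic fact I will exploit.

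From that, $d^2(x)=|x-\pi(x)|^2$ has the classical gradient formula $\grad d^2(x)=2(x-\pi(x))$, which is globally Lipschitz on $\Rd$. Hence $d^2\in\calC^{1,1}(\Rd)$. In a neighbourhood of the level set $\{d=\eps\}$ the value of $d$ is bounded below by, say, $\eps/2$, so in this neighbourhood $d=\sqrt{d^2}$ is $\calC^{1,1}$ as well, with $\grad d=\grad d^2/(2d)$ and $|\grad d|\equiv 1$ on $\{d>0\}$ (either by direct computation using the projection formula or by the eikonal identity for distance functions to convex sets). In particular $\grad d$ does not vanish along $\partial\mathcal{S}^\eps_\delta$.

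Finally, for each $x_0\in\partial\mathcal{S}^\eps_\delta$ I would rotate coordinates so that $\grad d(x_0)$ points in a chosen coordinate direction and apply the implicit function theorem to the equation $d(x)=\eps$, producing the desired local $\calC^{1,1}$ graph. The only mildly delicate point is invoking the $\calC^{1,1}$ implicit function theorem: this is standard, since local inversion of a $C^1$ map with Lipschitz derivative and invertible differential is again $C^1$ with Lipschitz derivative (the Lipschitz bound on the inverse's differential follows from the bound on the original via the matrix-inverse identity). So the core of the argument reduces to the Lipschitz bound on the projection $\pi$, with the rest being the routine passage from distance function to boundary regularity.
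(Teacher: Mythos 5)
Your argument is correct, but it takes a genuinely different route from the paper's. The paper proves the lemma directly at the level of the graph function: it defines $g(x')=\max\{x_d : (x',x_d)\in\calS_\del^\eps\}$, observes that $g$ is concave (hence locally Lipschitz) because $\calS_\del^\eps$ is convex, and then establishes a matching semi-convexity bound for $g$ by comparing it to the spherical cap coming from the uniform interior sphere condition --- which holds precisely because $\calS_\del^\eps$ is an $\eps$-neighbourhood of the convex set $\calS_\del$. Concavity together with semi-convexity then yields $g\in\calC^{1,1}$. You instead exhibit $\partial\calS_\del^\eps$ as a noncritical level set of a $\calC^{1,1}$ defining function: the squared distance $d^2(\cdot)=\dist^2(\cdot,\calS_\del)$ to the closed convex set $\calS_\del$, whose gradient $\grad d^2=2(\mathrm{Id}-\pi)$ is globally Lipschitz because the metric projection onto a convex set is nonexpansive, and whose gradient has norm $2\eps\neq 0$ on $\{d=\eps\}$; the $\calC^{1,1}$ implicit function theorem then produces the local graph. (It would be marginally cleaner to apply the implicit function theorem directly to $d^2-\eps^2=0$ rather than pass to $d=\sqrt{d^2}$, but your version is also fine since $d^2$ is bounded away from zero near the level set.) Both arguments ultimately exploit the same geometry --- that $\calS_\del^\eps$ is an $\eps$-neighbourhood of a convex body --- but yours packages it via the classical regularity of the squared distance function and is more conceptual and shorter, whereas the paper's is more elementary and self-contained, replacing the appeal to that regularity theorem by an explicit comparison of $g$ with the boundary of an inscribed ball.
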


\begin{proof}
Fix any $z\in\partial \calS_\del^\eps$.  We show that $\partial \calS_\del^\eps$ 
can be represented as the graph of a $\calC^{1,1}$ function in a neighborhood of $z$.
Making an orthogonal change of variables, we assume 
that $z=|z|e_d=(0,\ldots,0,|z|)$.  

Let $B'_R(x')$ denote the open ball in $\R^{d-1}$
of radius $R>0$ and center at $x'\in\R^{d-1}$ and let $B'_R:=B'_R(0)$.  
We choose $r>0$ so that $\bar{B}'_{3r} \times\{0\}
\subset \big(\calS_\del^\eps\big)^\circ$
Recall that $\calS_\del^\eps$ is a convex, compact, neighborhood of the origin of $\Rd$, note that, if $x' \in B'_{3r}$, then
the set $\{x_d\mid (x',x_d)\in \calS_\del^\eps\}$
is a closed, finite interval containing the origin of $\R$, and
set
\[
g(x')=\max\{x_d\mid (x',x_d)\in \calS_\del^\eps\}.
\]
Note that $0<g(x')\leq M$ for all $x'\in B'_{3r}$ and some constant $M>0$.

We show that $g\in \calC^{1,1}(B'_r)$. 
Since $\calS_\del^\eps$ is convex, $g$ is concave on $B'_{3r}$. 
This together with the boundedness
of $g$ implies that $g$ is Lipschitz continuous on $B'_{2r}$. Let $L>0$ be a 
Lipschitz bound of $g$ on $B'_{2r}$.  

Fix any $x'\in B'_r$.  
By the definition of $\calS_\del^\eps$ and the convexity of $\calS_\del$, 
there exists a unique $y\in \calS_\del$ such that
\[
\bar B_\eps(x',g(x'))\cap  \calS_\del=\{y\}.
\]
On the other hand, since $y\in\calS_\del$, we have 
\begin{equation}\label{A10-1}
B_\eps(y)\subset \calS_\del^\eps.
\end{equation}
Now, using a simple geometry and the Lipschitz property of $g$, we deduce
that 
\[
g(x')>y_d \ \ \text{ and } \ \ \frac{|x'-y'|}{g(x')-y_d}\leq L.
\] 
Indeed, the second estimate above can be obtained as follows: due to the continuity of 
$x'\mapsto (x', g(x'))\mapsto y$, it is enough to establish the above relation
at point $x'$ where $\grad g$ exists.
Convexity of $\calS_\del$ and $\calS^\eps_\del$ implies that $(x'-y', g(x')-y_d)\in N((x', g(x')), \calS^\eps_\del)$ and  $N((x', g(x')), \calS^\eps_\del)=\{\lambda(-\grad g(x'), 1)\colon \lambda\in\R_+\}$. Thus for some $\lambda>0$ we have $(x'-y', g(x')-y_d)=\lambda (-\grad g(x'), 1)$. Now it is easy to see that
the second estimate above holds.
Furthermore, we get  
\[
\eps^2=|x'-y'|^2+(g(x')-y_d)^2\geq (1+L^{-2})|x'-y'|^2. 
\]
Thus, setting $\theta:=L/\sqrt{1+L^2}\in (0,\,1)$, we have 
\begin{equation}\label{A10-2}
|x'-y'|\leq \theta\eps,
\end{equation}

Note that $V:=B'_r\cap B'_\eps(y')$ is a neighborhood of $x'$. Let $z'\in V$ 
and observe by \eqref{A10-1} that 
\[
|(z',g(z'))-y|\geq \eps=|(x',g(x'))-y|.  
\]
From this, noting that $g(z')>y_d$, we get
\begin{equation}\label{a-17}
g(z')\geq g(x')+\sqrt{\eps^2-|z'-y'|^2}-\sqrt{\eps^2-|x'-y'|^2}.
\end{equation}

Next  setting $h(z')=\sqrt{\eps^2-|z'-y'|^2}$ for 
$z'\in V$ and using \eqref{A10-2}, 
we calculate 
\[
D^2h(x')=-\frac{1}{\eps^2-|x'-y'|^2}\left(
\mathbb{I}'+\frac{(x'-y')\otimes (x'-y')}{\sqrt{\eps^2-|x'-y'|^2}}
\right),
\]
where $\mathbb{I}'$ denotes the $(d-1)\times(d-1)$ identity matrix,  
and moreover,
\[
D^2h(x')\geq- \frac{1}{\eps^2(1-\theta^2)}\left(
1+\frac{\eps\theta^2}{\sqrt{1-\theta^2}}
\right)\mathbb{I}'.
\]
This and \eqref{a-17} show that $g$ is semi-convex on $B'_r$, which together
with the concavity of $g$ ensures that $g\in \calC^{1,1}(B'_r)$.
\end{proof}

Let $\rho: \Rd\rightarrow[0,+\infty)$ be the gauge function of  $\mathcal{S}^\eps_\delta$, defined by
\begin{equation}\label{A10-3}
\rho(x):=\inf\{\lambda> 0\mid{x}\in\lambda\mathcal{S}^\eps_\delta\},
\end{equation}
where
\[\lambda\mathcal{S}^\eps_\delta:=\{\lambda {\xi}\mid {\xi}\in
\mathcal{S}^\eps_\delta\}.\]
It is well known that $\rho(\cdot)$ is a positively homogeneous, of degree one, convex function. In particular, we have $\rho( {0})=0$. It is clear to see that 
$\rho(y)>0$ at $ {y}\neq  {0}$. 

\begin{lemma} \label{lem-A11} The function $\rho$ is locally $\calC^{1,1}$ on $\R^d\setminus\{0\}$ 
and for $y\in\R^d\setminus\{0\}$, 
$D\rho( {y})\in N( {y}/\rho( {y}),\mathcal{S}^\eps_\delta)$,
where $N( {y}/\rho( {y}),\mathcal{S}^\eps_\delta)$ denotes the normal cone at $ {y}/\rho( {y})\in \partial \mathcal{S}^\eps_\delta$.
\end{lemma}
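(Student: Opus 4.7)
The plan is to deduce local $\calC^{1,1}$ regularity of $\rho$ from the $\calC^{1,1}$ graph representation of $\partial \mathcal{S}^\eps_\delta$ supplied by Lemma~\ref{lem-A10}, via an implicit function argument, and then to obtain the normal cone inclusion from convexity and positive homogeneity of $\rho$.

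First I would handle regularity. Fix $y_0 \in \R^d \setminus \{0\}$, set $\mu_0 = 1/\rho(y_0)$ and $z_0 = \mu_0 y_0$. Since $\rho(z_0) = 1$, we have $z_0 \in \partial \mathcal{S}^\eps_\delta$. By Lemma~\ref{lem-A10}, after an orthogonal change of coordinates, $\partial \mathcal{S}^\eps_\delta$ coincides in a neighborhood of $z_0$ with the graph $\{x_d = g(x')\}$ of a $\calC^{1,1}$ function $g$. Define
\[
F(y,\mu) := \mu y_d - g(\mu y'),
\]
so $F(y_0, \mu_0) = 0$. Because $\mathcal{S}^\eps_\delta$ is convex with $0$ in its interior, the outward normal $(-Dg(z_0'),1)$ to the graph at $z_0$ satisfies $(-Dg(z_0'),1)\cdot z_0 > 0$, i.e.\ $z_{0d} - Dg(z_0')\cdot z_0' > 0$; dividing by $\mu_0$ gives
\[
\partial_\mu F(y_0,\mu_0) = y_{0d} - Dg(\mu_0 y_0')\cdot y_0' > 0.
\]
The $\calC^{1,1}$ implicit function theorem then produces a $\calC^{1,1}$ solution $\mu = \mu(y)$ of $F(y,\mu)=0$ near $y_0$. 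By definition of $\rho$ and the strict monotonicity in $\mu$, we must have $\mu(y) = 1/\rho(y)$, so $\rho$ is $\calC^{1,1}$ in a neighborhood of $y_0$.

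For the normal cone inclusion, I would use that $\rho$ is convex and positively homogeneous of degree $1$, with $\rho(y) > 0$ for $y \neq 0$. Euler's identity (valid wherever $\rho$ is differentiable) gives $D\rho(y)\cdot y = \rho(y)$, hence $D\rho(y)\cdot(y/\rho(y)) = 1$. For any $x \in \mathcal{S}^\eps_\delta$ one has $\rho(x) \leq 1$, while convexity of $\rho$ yields
\[
\rho(x) \geq \rho(y) + D\rho(y)\cdot(x-y) = D\rho(y)\cdot x.
\]
Combining these,
\[
D\rho(y)\cdot\bigl(x - y/\rho(y)\bigr) \leq 1 - 1 = 0 \quad\text{for all }x \in \mathcal{S}^\eps_\delta,
\]
which is precisely the defining condition \eqref{0000} for $D\rho(y) \in N(y/\rho(y), \mathcal{S}^\eps_\delta)$.

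The main obstacle is guaranteeing $\calC^{1,1}$ (not merely $C^1$) regularity in the implicit function step, since $g$ is only $\calC^{1,1}$ rather than $C^2$. This is standard: the usual implicit function theorem delivers a $C^1$ implicit function $\mu(y)$ with derivative $D_y\mu = -F_y/F_\mu$, and the Lipschitz continuity of $Dg$ together with $\partial_\mu F$ being bounded away from zero locally transfers to Lipschitz continuity of $D_y\mu$, giving $\mu \in \calC^{1,1}$ and hence $\rho \in \calC^{1,1}$ locally.
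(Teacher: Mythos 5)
Your proof is correct and follows essentially the same route as the paper: apply Lemma~\ref{lem-A10} to write $\partial\calS_\del^\eps$ locally as a $\calC^{1,1}$ graph, run the implicit function theorem to get local $\calC^{1,1}$ regularity of $\rho$ (you parametrize by $\mu=1/\lambda$ where the paper uses $\lambda=\rho(y)$, and you verify $\partial_\mu F\ne 0$ via the interior-point inequality rather than the paper's choice of coordinates placing $z'=0$, but this is cosmetic), and then deduce the normal cone inclusion from convexity together with positive homogeneity (your Euler-identity step is the same fact the paper invokes when it writes $p = D\rho(y/\rho(y))$). No gaps.
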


\begin{proof}  It follows from the definition of $\rho$ that for any $y\in\R^d\setminus\{0\}$ and $\lam>0$,  we have 
\begin{equation}\label{A11-1}
\lam=\rho(y)\ \ \text{ if and only if } \  \  
y/\lam\in\partial S_\del^\eps. 
\end{equation} 

Fix any $z\in\R^d\setminus\{0\}$ and show that $\rho$ is a $\calC^{1,1}$ 
function near $z$. Up to an orthogonal change of variables, one can assume that $ {z}=\abs{z} {e}_d$. 
It follows from Lemma \ref{lem-A10} that there exists a neighborhood  $V$ of the origin in $\R^{d-1}$ and a function $g\in \calC^{1,1}(V)$ such that for any $x'\in V$ and $x_d>0$,
\begin{equation}\label{A11-2}
(x',x_d) \in\partial S_\del^\eps \ \ \text{ if and only if } 
\ \ x_d=g(x').
\end{equation}

Set $\lam_z=\rho(z)$. By \eqref{A11-1} and \eqref{A11-2}, we have 
\[
z_d/\lam_z =g(z'/\lam_z)=g(0). 
\]
Set $I=(\lam_z/2,\,2\lam_z)$ and $W=(\frac{2}{\lam_z})V\times (0,\,\infty)$,
and note that $W\times I$ is a neighborhood of $(z,\lam_z)\in\R^{d+1}$. 
For any $y=(y',y_d)\in W$, in view of \eqref{A11-1}, 
we intend to find $\lam\in I$ such that 
\[
y/\lam \in\partial S_\del^\eps. 
\] 
Since $y'/\lam\in V$, the above inclusion reads 
\[
y_d/\lam=g(y'/\lam). 
\]
We define the function $F\,:\,W\times I\to \R$ by
\[
f(y,\lam)=y_d/\lam-g(y'/\lam),
\]
note that $f(z,\lam_z)=0$, and calculate that 
\[
\frac{\partial f}{\partial \lam}(z,\lam_z)=-\frac{z_d}{\lam_z^2}<0. 
\]
By the implicit function theorem, there are a neighborhood $U\subset W$ of $z$ 
and a $\calC^{1,1}$ function $\Lambda \,:\, U\to I$ such that 
$f(y,\Lambda(y))=0$ for all $y\in U$. In view of \eqref{A11-1} and \eqref{A11-2}, 
we see that $\rho(y)=\Lambda(y)$ for all $y\in U$, which completes the proof. 

Now, let $y\in\R^d\setminus\{0\}$ and $p=D\rho(y)$. Note that  
$y/\rho(y)\in\partial \calS_\del^\eps$ and $p=D\rho(y/\rho(y))$ 
and that for any $x\in\calS_\del^\eps$,
\[
1\geq \rho(x)\geq\rho(y/\rho(y))+p\cdot (x-y/\rho(y))=1+p\cdot (x-y/\rho(y)), 
\]  
that is, we have 
\[
p\cdot (x-y/\rho(y))\leq 0 \ \ \text{ for all } x\in\calS_\del^\eps.
\]
This shows that
\[
D\rho(y)=p\in N(y/\rho(y),\calS_\del^\eps). \qedhere
\]
\end{proof}

\begin{lemma} \label{lem-12} Let $x\in\partial\calS_\del^\eps$ and $i\in\calI$. 
For any $\gamma\in N(x,\calS_\del^\eps)$,  
\[
\calH_i(\gamma)
\begin{cases}
\leq 0 & \text{ if }\ x_i <\del-\eps,\\[3pt]
\geq 0 & \text{ if }\ x_i >-(\del-\eps).
\end{cases}
\]
\end{lemma}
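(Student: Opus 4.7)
The plan is to reduce the desired sign assertion about $\calS_\del^\eps$ to the analogous statements for $\calS_\del$ already proved in Lemmas~\ref{lem-A8} and \ref{lem-A9}, by comparing the normal cones at $x$ and at its metric projection onto the convex body $\calS_\del$.

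Given $x\in\partial\calS_\del^\eps$, I would let $y$ denote the unique nearest point of $\calS_\del$ to $x$; by convexity this projection is well defined, and since $x\notin\calS_\del$ we have $|x-y|=\eps$. Two identifications are needed. First, $x-y\in N(y,\calS_\del)$: this is the standard variational characterization of the metric projection onto a convex set, namely $(x-y)\cdot(z-y)\leq 0$ for every $z\in\calS_\del$. Second, $N(x,\calS_\del^\eps)=\R_+(x-y)$: the inclusion $\R_+(x-y)\subset N(x,\calS_\del^\eps)$ follows from a short computation using $\calS_\del^\eps=\calS_\del+\bar B_\eps(0)$ (writing any $z\in\calS_\del^\eps$ as $z=z_0+w$ with $z_0\in\calS_\del$ and $|w|\leq\eps$ gives $(x-y)\cdot(z-x)\leq 0+\eps^2-\eps^2=0$), while the reverse inclusion is immediate from Lemma~\ref{lem-A10}, which ensures $\partial\calS_\del^\eps$ is $\calC^{1,1}$, hence $\calC^1$, so the normal cone at $x$ is one-dimensional. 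Combining these yields $N(x,\calS_\del^\eps)\subset N(y,\calS_\del)$.

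From here the claim follows by bookkeeping on the $i$-th coordinate. Since $|x_i-y_i|\leq|x-y|=\eps$, the hypothesis on $x_i$ transfers to $y_i$: if $x_i<\del-\eps$, then $y_i\leq x_i+\eps<\del$, and since $y\in\calS_\del\subset H_\del^-$ also $y_i\geq-\del$, so $y\in\partial\calS_\del\cap E_{i,\del}$; Lemma~\ref{lem-A8} then gives $\calH_i(\gamma)\leq 0$ for every $\gamma\in N(y,\calS_\del)\supset N(x,\calS_\del^\eps)$. Symmetrically, if $x_i>-(\del-\eps)$, then $y_i\geq x_i-\eps>-\del$, so $y\in\partial\calS_\del\cap G_i$, and Lemma~\ref{lem-A9} yields $\calH_i(\gamma)\geq 0$. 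The main obstacle is establishing $N(x,\calS_\del^\eps)\subset N(y,\calS_\del)$, which would fail in general without the $\calC^1$ regularity of $\partial\calS_\del^\eps$: only the easy one-sided inclusion $\R_+(x-y)\subset N(x,\calS_\del^\eps)$ is automatic from convexity, and the transfer to $\calS_\del$ crucially uses the one-dimensionality of the normal cone provided by Lemma~\ref{lem-A10}.
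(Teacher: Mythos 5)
Your proof is correct and takes essentially the same approach as the paper: both identify $y\in\partial\calS_\del$ as the metric projection of $x$ onto $\calS_\del$ (the paper writes it explicitly as $y=x-\eps D\rho(x)/|D\rho(x)|$), show that $N(x,\calS_\del^\eps)$ is a single ray contained in $N(y,\calS_\del)$, transfer the coordinate constraints from $x_i$ to $y_i$ using $|x-y|=\eps$, and then invoke Lemmas~\ref{lem-A8} and \ref{lem-A9}. The only difference is a minor technical one in how the ray identity $N(x,\calS_\del^\eps)=\R_+(x-y)$ is obtained: the paper derives it from the convexity and differentiability of the gauge function $\rho$ via Lemma~\ref{lem-A11} and then recovers $D\rho(x)\in N(y,\calS_\del)$ by characterizing $y$ as the unique point of $\bar B_\eps(x)\cap\partial\calS_\del$, whereas you get the forward inclusion directly from the Minkowski-sum representation $\calS_\del^\eps=\calS_\del+\bar B_\eps(0)$ and the reverse from the $\calC^1$ boundary regularity of Lemma~\ref{lem-A10}, which amounts to the same geometry.
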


\begin{proof} Fix any $x\in\partial\calS_\del^\eps$. 

Since $\rho(x)=1$, 
$\rho(z)\leq 1$ for all $z\in\calS_\del^\eps$, and 
$\rho$ is convex, we have 
\[
1\geq \rho(z)\geq \rho(x)+D\rho(x)\cdot(z-x) \ \ \text{ for all } 
z\in\calS_\del^\eps,
\] 
which implies that $D\rho(x)\in N(x,\calS_\del^\eps)$. 
Since $\rho$ is differentiable at $x$ and 
$D\rho(x)\not=0$, it is easily seen that  
\begin{equation}\label{lem-12-1}
N(x,\calS_\del^\eps)=\R_+D\rho(x). 
\end{equation}

Set $y:=x-\eps D\rho(x)/|D\rho(x)|$ and observe that
\begin{equation}\label{lem-12-2}
\bar B_\eps(x)\cap \partial\calS_\del=\{y\},
\end{equation}
which implies that $\,
\bar B_\eps(x)\cap \calS_\del=\{y\}$, 
and therefore 
\begin{equation}\label{lem-12-3}
D\rho(x)\in N(y,\calS_\del).
\end{equation}
Indeed, it is clear that 
\[
\bar B_\eps(x)\cap \partial\calS_\del\not=\emptyset. 
\]
Note moreover that if $\,z\in\bar B_\eps(x)\cap \partial\calS_\del$,
then $\,
B_\eps(z)\subset \calS_\del^\eps\,$ 
and, according to \eqref{lem-12-1},
\[
D\rho(x)\cdot (w-x)\leq 0 \ \ \text{ for all }w\in B_\eps(z).
\] 
This last inequality readily yields  
\[
z-x=-\eps D\rho(x)/|D\rho(x)|,
\]
that is, $z=y$, and we conclude that \eqref{lem-12-2} holds.   

Fix any $\gamma\in N(x,\calS_\del^\eps)$ and $i\in\calI$. 
By \eqref{lem-12-1} and \eqref{lem-12-3}, we have 
\[
\gamma=(|\gamma|/|D\rho(x)|)D\rho(x)\in N(y,\calS_\del).
\]
Thus, noting that 
\[
y_i\begin{cases}
\leq x_i+|x-y|<\del &\text{ if }\ x_i<\del-\eps,\\[3pt]
\geq x_i-|x-y|>-\del & \text{ if }\ x_i>-(\del-\eps),
\end{cases}
\]
we may apply Lemmas \ref{lem-A8} and \ref{lem-A9}, to get 
\[
\calH_i(\gamma)
\begin{cases}
\leq 0&\text{ if } \ x_i<\del-\eps,\\[3pt]
\geq 0&\text{ if } \ x_i>-(\del-\eps),
\end{cases}
\]
which completes the proof. 
\end{proof}

\begin{proof}[Proof of Theorem \ref{test-f}] 
As noted before, we know that when $d=1$,  the function 
$\ph(x):=x^2$ has all the required properties. 

In what follows we assume that $d>1$, and we define
\begin{equation*}
\varphi( {x}):= \rho^2( {x}) \quad  \text{ for } \ x\in \Rd,
\end{equation*}
where $\rho$ is the function given by \eqref{A10-3}. It is clear that 
$\ph(x)>0$ for all $x\in\R^d\setminus\{0\}$ and $\ph$ is positively homogeneous 
of degree two. 

Next we show that $\ph$ is globally $\calC^{1,1}$ on $\R^d$. 
Since $\rho$ is locally $\calC^{1,1}$ on $\R^d\setminus\{0\}$, the function 
$\ph$ is also locally $\calC^{1,1}$ on $\R^d\setminus\{0\}$. 
Let $x,y\in\R^d\setminus\{0\}$ and in view of the homogeneity property of $\ph$ we obtain that 
\[\begin{aligned}
D\ph(x)-D\ph(y)&\,= |x|D\ph(x/|x|)-|y|D\ph(y/|y|)
\\&\,=|x|\big(D\ph(x/|x|)-D\ph(y/|y|)\big)+(|x|-|y|)D\ph(y/|y|).
\end{aligned}
\]
We choose a constant $C>0$ so that 
\[
|D\ph(p)|\leq C 
\ \ \text{ and } \ \ 
|D\ph(p)-D\ph(q)|\leq C|p-q| \ \ \text{ for all }p,q\in \partial B_1. 
\]
Combining these yields
\[\begin{aligned}
|D\ph(x)-D\ph(y)|&\leq 
|x|\,|D\ph(x/|x|)-D\ph(y/|y|)|+\big|(|x|-|y|)D\ph(y/|y|)\big|
\\&\leq C|x|\,\big|x/|x|-y/|y|\big|+C|x-y|
\\&\leq 2C|x-y|+C|x-y|=3C|x-y|.
\end{aligned}
\]
This shows that for any $x,y\in\R^d$,
\[
|D\ph(x)-D\ph(y)|\leq 3C|x-y|.
\]
Accordingly, the function $\ph$ is globally $\calC^{1,1}$ on $\R^d$. 

Let $x\in\R^d\setminus\{0\}$ and $i\in\calI$, and assume that $x_i\geq 0$. 
By Lemma \ref{lem-A11}, we have 
\[
D\ph(x)=2\rho(x)D\rho(x)\in N(x/\rho(x),\calS_\del^\eps). 
\]
Noting that $x/\rho(x)\in\partial \calS_\del^\eps$, we see from Lemma \ref{lem-12} 
that 
\[
\calH_i(D\ph(x))\geq 0.  
\]
Thus, noting that $D\ph(0)=0$ and, hence, $\calH_i(D\ph(0)=0$ for all $i\in\calI$, 
we may conclude that for any $x\in\R^d$ and $i\in\calI$,
\[
\calH_i(D\ph(x))\geq 0 \ \ \text{ if }\ x_i\geq 0.  
\] 
An argument similar to the above shows that for any $x\in\R^d$ and $i\in\calI$,
\[
\calH_i(D\ph(x))\leq 0 \ \ \text{ if }\ x_i\leq 0.  
\] 
This completes the proof. 
\end{proof}

\begin{remark}\label{R-smooth}
The $\calC^{1,1}$ regularity of 
test functions is sufficient to achieve the comparison theorem as is shown in the proof of Theorem \ref{T-comp}. 
Alternatively, one can build smooth test functions 
by using the mollification techniques.   
\end{remark}

\appendix
\section{}\label{S-appen}
Let us first recall the Skorokhod problem which will be used in the analysis below.
\begin{definition}[Skorokhod Problem]
Given $\psi\in \calC([0, \infty): \R)$ with $\psi(0)\geq 0$, a pair $(\phi,\eta)\in \bigl(\calC([0, \infty):\R_+)\bigr)^2$
is said to solve the Skorokhod Problem (SP) for $\psi$ on the domain $[0, \infty)$, if
\begin{enumerate}
\item $\phi(t)=\psi(t)+\eta(t), \quad t\geq 0$,
\item $\phi(0)=\psi(0),\quad t\geq0$,
\item $\eta$ is non-negative, non-decreasing and
$\int_0^t \phi (s)d\eta(s)=0, \quad \forall \; t\geq 0$.
\end{enumerate}
\end{definition}
In fact, there is always a unique pair $(\phi, \eta)$ solving the Skorokhod Problem for a given $\psi$,
given by
\[
\eta(t)=\sup_{s\in[0, t]}(-\psi(s))^+,
\qquad
\phi(t)=\psi(t)+\eta(t),\qquad t\ge0.
\]
Recall that for $\alpha=(\alpha_1, \ldots, \alpha_d)\in\Rd_+$ with $\max_i\alpha_i<1$, we have
$$\M(\alpha)=\{M=[\mathbb{I}-P]\in \mat \, :\, P_{ii}=0, P_{ij}\geq 0\,\, \text{and}  \sum_{j:j\neq i}P_{ji}\leq \alpha_i\}.$$
In this section we show that for any $v\in\Uadm$ and given $x\in \Rd_+$, the following reflected diffusion has a unique adapted strong solution $(X, Y)\in \calC([0, \infty): \Rd_+)\times \calCup_0([0, \infty): \Rd_+)$,
\begin{align}\label{a-1}
X(t) &= x + \int_0^t b(X(s))ds + \Sigma W(t) + \int_{[0, t]} v(s) dY(s), \quad t\geq 0,\; \text{a.s.,}
\\
\int_{[0, \infty)} X_i(s)dY_i(s) & =0  \quad \text{for all}\,\; i\in\calI,\nonumber
\end{align}
where $W$ is a standard $d$-dimensional Brownian motion on a given filtered probability space $(\Omega, \calF, \{\calF_t\}, \Prob)$.
Here $\Uadm$ denotes the set of all admissible controls $v$ where $v$ takes values in $\M(\al)$. Before we state the result let us define the following quantity,
\begin{align}
\beta_i(t) = \sup_{0\leq s\leq t}\max\{0, (-\Sigma W(s))_i\}.\label{a-2}
\end{align}
The following result is a modified version of the results obtained in \cite{ram}.
\begin{theorem}\label{Thm-existence}
For any $v\in\Uadm$, \eqref{a-1} has a unique adapted strong solution $(X, Y)$ taking values in $\calC([0, \infty): \Rd_+)\times \calCup_0([0, \infty): \Rd_+)$ and
\begin{equation}\label{a-3}
\sum_{i\in\calI} Y_i(t)\leq \frac{1}{1-\max_i\al_i}\Big(\abs{b}_1 t + \sum_{i\in\calI}\beta_i(t)\Big)  \quad \text{for all}\;\; t\geq 0, \; \text{a.s.,}
\end{equation}
where $\beta$ is given by \eqref{a-2} and $\abs{b}_1=\sum_{i\in\calI}\sup_{x\in\Rd_+}\abs{b_i(x)}$.
\end{theorem}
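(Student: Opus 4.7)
\noindent\textbf{Proof plan for Theorem \ref{Thm-existence}.} The plan is to construct the unique adapted solution as a fixed point of a map built from the one-dimensional Skorokhod problem applied coordinatewise, mimicking the strategy of \cite{ram} but avoiding any time-regularity of $v$. Given $(X,Y)\in\calC([0,T]:\Rd_+)\times\calCup_0([0,T]:\Rd_+)$, define
\[
U_i(X,Y)(t)=x_i+\int_0^t b_i(X(s))\,ds+(\Sigma W(t))_i+\sum_{j\ne i}\int_0^t v_{ij}(s)\,dY_j(s),
\]
and set $\mathcal{T}_i(X,Y)(t)=\sup_{s\le t}\max\{0,-U_i(X,Y)(s)\}$, $\mathcal{S}_i(X,Y)(t)=U_i(X,Y)(t)+\mathcal{T}_i(X,Y)(t)$. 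A pair $(X,Y)$ solves \eqref{a-1} if and only if it is a fixed point of $(X,Y)\mapsto(\mathcal{S}(X,Y),\mathcal{T}(X,Y))$, because the Skorokhod characterization forces the reflection to push only when $X_i=0$.

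\smallskip

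The heart of the argument is a contraction estimate on a short interval $[0,\theta]$. For two candidates $(X^1,Y^1),(X^2,Y^2)$ starting at the same $x$ and driven by the same $(W,v)$, Shashiashvili's inequality \cite[p.~171]{shashiasvili} applied to the one-dimensional Skorokhod map gives, for each $i$,
\[
\bvnorm{Y^1_i-Y^2_i}_{[0,t]}\le\bvnorm{U_i(X^1,Y^1)-U_i(X^2,Y^2)}_{[0,t]}.
\]
Expanding the right side using the Lipschitz bound on $b$, $|v_{ij}|\le 1$, and the crucial column bound $\sum_{i\ne j}|v_{ij}|=\sum_{i\ne j}P_{ij}\le\alpha_j$, summing over $i$ yields
\[
\bvnorm{Y^1-Y^2}_{[0,t]}\le Kdt\sup_{s\le t}\norm{X^1(s)-X^2(s)}+\max_i\alpha_i\,\bvnorm{Y^1-Y^2}_{[0,t]},
\]
and an analogous bound for $\sup_{s\le t}\norm{X^1(s)-X^2(s)}$ in terms of the BV norm of $Y^1-Y^2$ (as already carried out for similar comparisons in \eqref{16}--\eqref{17}). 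The hypothesis $\max_i\alpha_i<1$ absorbs the $Y$-dependent term, and choosing $\theta$ with $Kd\theta(1+2/(1-\max_i\alpha_i))<1/2$ makes the map a strict contraction on $[0,\theta]$. The Picard iterates, initialized by $(x,0)$, remain $\{\calF_t\}$-adapted since $v$ and $W$ are, so the unique fixed point is adapted. Global existence on $[0,\infty)$ follows by iterating the local construction on $[k\theta,(k+1)\theta]$, using the endpoint value as new initial data (the local estimates are uniform in the starting point).

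\smallskip

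For the quantitative bound \eqref{a-3}, use the Skorokhod representation $Y_i(t)=\sup_{s\le t}(-\psi_i(s))^+$, where $\psi_i(s)=x_i+\int_0^s b_i\,dr+(\Sigma W)_i(s)-\sum_{j\ne i}\int_0^s P_{ij}(r)\,dY_j(r)$. Since $x_i\ge 0$ and the last sum is non-decreasing,
\[
Y_i(t)\le\abs{b}_\infty t+\beta_i(t)+\sum_{j\ne i}\int_0^t P_{ij}(r)\,dY_j(r).
\]
Summing over $i$ and interchanging the order of summation, the column condition $\sum_{i\ne j}P_{ij}(r)\le\alpha_j$ gives
$\sum_i\sum_{j\ne i}\int_0^t P_{ij}dY_j\le\max_j\alpha_j\sum_jY_j(t)$, and rearranging using $\max_j\alpha_j<1$ produces exactly \eqref{a-3}. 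The main obstacle is the contraction step: one must carefully control the reflection term $\int v\,dY$ in the BV norm, and it is precisely here that Condition \ref{cond2} is used — without the strict inequality $\max_i\alpha_i<1$, the estimate does not close and one cannot eliminate the need for time-regularity of $v$ assumed in \cite{ram}.
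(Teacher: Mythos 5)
Your overall architecture is the same as the paper's: build the pair of maps $(\calS,\calT)$ from the coordinatewise one-dimensional Skorokhod problem, invoke Shashiashvili's variational lemma to get a BV-norm Lipschitz bound, and observe that the column bound $\sum_{i\neq j}P_{ij}\leq \alpha_j$ together with Condition~\ref{cond2} is what keeps the reflection term under control. The proof of \eqref{a-3} is also essentially identical to the paper's and is correct: the Skorokhod representation, monotonicity of $\sum_{j\ne i}\int P_{ij}\,dY_j$, summation over $i$, and the rearrangement using $\max_i\alpha_i<1$ all go through.

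However, the existence step has a real gap. You write the estimate with $Y^1,Y^2$ appearing on both the left and the right,
$$\bvnorm{Y^1-Y^2}\le Kd\theta\,\|X^1-X^2\|+\max_i\alpha_i\,\bvnorm{Y^1-Y^2},$$
and then propose to ``absorb'' the last term using $\max_i\alpha_i<1$. But if $(X^1,Y^1)$ and $(X^2,Y^2)$ are \emph{inputs} to the map, the left-hand side should be $\bvnorm{\calT(X^1,Y^1)-\calT(X^2,Y^2)}$, a different quantity from the right-hand $\bvnorm{Y^1-Y^2}$, and there is nothing to absorb. (If instead you are assuming $Y^k=\calT(X^k,Y^k)$, you are only proving uniqueness of a fixed point, not its existence.) The cross-coupled estimates one actually gets for the Picard map are
$$\bvnorm{\calT(x,y)-\calT(\bar x,\bar y)}\leq Kd\theta\|x-\bar x\|+\max_i\alpha_i\bvnorm{y-\bar y},\qquad
\|\calS(x,y)-\calS(\bar x,\bar y)\|\leq 2Kd\theta\|x-\bar x\|+2\max_i\alpha_i\bvnorm{y-\bar y},$$
and the coefficient $\max_i\alpha_i$ does not shrink with $\theta$. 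In the sum metric $\|\cdot\|+\bvnorm{\cdot}$ this gives a contraction factor of order $3\max_i\alpha_i$, which need not be below $1$ under Condition~\ref{cond2}. The paper closes this by using a weighted metric $\D((x,y),(\bar x,\bar y))=\gamma_1\|x-\bar x\|_T+\gamma_2\bvnorm{y-\bar y}_T$ and first choosing $\gamma_1/\gamma_2$ small enough that $\max_i\alpha_i(2\gamma_1+\gamma_2)/\gamma_2<1$ (possible precisely because $\max_i\alpha_i<1$), then choosing $T$ small to control the remaining $b$-Lipschitz factor. Equivalently, one observes that the iteration matrix $\left(\begin{smallmatrix}2Kd\theta&2\max_i\alpha_i\\ Kd\theta&\max_i\alpha_i\end{smallmatrix}\right)$ has spectral radius $2Kd\theta+\max_i\alpha_i<1$ for small $\theta$ and chooses a corresponding weighted norm. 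Without this device your Picard iteration is not shown to converge. The choice $Kd\theta(1+2/(1-\max_i\alpha_i))<1/2$ that you cite is the one used in Lemma~\ref{lem-cont} for the \emph{stability} of two already-existing solutions under perturbation of the initial point; it is a self-referential a priori estimate on fixed points and does not furnish a contraction for the map itself.
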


We refer, for instance, to \cite{lions-sznitman} for general 
results for reflected diffusion processes on smooth domains. 

\begin{proof}
The key ideas of this proof are borrowed from \cite{ram}. Fix $T>0$. To show the existence of a unique adapted strong solution it enough to show the existence of a unique adapted strong solution
on the the interval $[0, T]$. Thus we consider the space $\calC([0, T]: \Rd)\times \calCup_0([0, T]: \Rd_+)$. For $(x, y)\in \calC([0, T]: \Rd_+)\times \calCup_0([0, T]: \Rd_+)$, we define
\begin{align*}
\bnorm{x}_T &= \sum_{i\in\calI} \sup_{s\in [0, T]}\abs{x_i(s)},
\\
\bvnorm{y}_T &= \sum_{i\in\calI} \bvnorm{y_i}_T,
\end{align*}
where we recall that $\bvnorm{\cdot}_s$ denotes the bounded variation norm on the interval $[0, s]$. For positive $\gamma_1, \gamma_2$, we endow the space $\calC([0, T]: \Rd_+)\times \calCup_0([0, T]: \Rd_+)$ with the metric
$$\D((x, y), (\bar{x}, \bar{y}))=\gam_1\bnorm{x-\bar{x}}_T +\gam_2 \bvnorm{y-\bar{y}}_T,$$
where  $(x, y),\, (\bar{x}, \bar{y})\in \calC([0, T]: \Rd_+)\times \calCup_0([0, T]: \Rd_+)$. It is easy to check that for any positive $\gam_i, i=1,2,$ $\calC([0, T]: \Rd_+)\times \calCup_0([0, T]: \Rd_+)$ forms a complete metric space with the metric $\D$.
For $(x, y)\in \calC([0, T]: \Rd_+)\times \calCup_0([0, T]: \Rd_+)$ we define two maps $\calT: \calC([0, T]: \Rd_+)\times \calCup_0([0, T]: \Rd_+)\to \calCup_0([0, T] : \Rd)$ and $\calS : \calC([0, T]: \Rd_+)\times \calCup_0([0, T]: \Rd_+)\to \calC([0, T]: \Rd_+)$
as follows:
\begin{align*}
\calT_i(x, y)(t) & = \sup_{0\leq s\leq t} \max\{0, -U_i(x, y)(s)\},
\\
\calS_i(x, y)(t) &= U_i(x, y)(t) + \calT_i(x, y)(t), 
\\
\text{where} \;\; U_i(x, y)(t) &=x_i +\int_0^t b_i(x(s))ds + (\Sigma W)_i(t) +\sum_{j\neq i}\int_0^tv_{ij}(s)dy_j(s).
\end{align*}
From the property of 1-dimensional Skorokhod problem we note that any fixed point of $(\calS, \calT)$ solves \eqref{a-1}.
The idea of the proof is choose suitable $\gam_i, T_0$ so that the map $(\calS, \calT): \calC([0, T_0]: \Rd_+)\times \calCup_0([0, T_0]: \Rd_+)\to \calC([0, T_0]: \Rd)\times \calCup_0([0, T_0]: \Rd_+)$ becomes a contraction almost surely. Once we
have uniqueness on the interval $[0, T_0]$ one can extend the result on $[0, T]$ be standard time shifting argument. Now we fix a sample point in $(\Omega, \calF, \Prob)$. Then we view $X_0, W$ as deterministic elements. Let
$K$ be the Lipschitz constant of $b$. Let $(x, y), (\bar{x}, \bar{y})\in \calC([0, T]: \Rd)\times \calCup_0([0, T]: \Rd_+)$. Using the lemma of Shashiasvili \cite[pp. 170-175]{shashiasvili} concerning variational distance between maximal functions, we get
\begin{align*}
\bvnorm{\calT_i(x, y)-\calT_i(\bar{x}, \bar{y})}_T
& \leq \bvnorm{U_i(x, y)-U_i(\bar{x}, \bar{y})}_T
\\
&\leq \int_0^T\abs{b_i(x(s))-b_i(\bar{x}(s))}ds + \sum_{j\neq i}\int_0^T(-v_{ij}(s))\abs{dy_j-d\bar{y}_j}(s),
\end{align*}
where we have used the property that $v_{ij}\leq 0$. We sum over $i$ to obtain
\begin{align}\label{a-4}
\bvnorm{\calT(x, y)-\calT(\bar{x}, \bar{y})}_T & \leq KdT\, \bnorm{x-\bar{x}}_T +\sum_{j=1}^d\int_0^T\sum_{i\neq j} (-v_{ij}(s))\abs{dy_j-d\bar{y}_j}(s)
\nonumber
\\
& \leq KdT\, \bnorm{x-\bar{x}}_T +\max_{i}{\al_i}\; \sum_{j=1}^d\int_0^T \abs{dy_j-d\bar{y}_j}(s)\nonumber
\\
&= KdT\, \bnorm{x-\bar{x}}_T +\max_{i}{\al_i}\; \bvnorm{y-\bar{y}}_T.
\end{align}
Similarly,
\begin{align}\label{a-5}
\bnorm{\calS(x, y)-\calS(\bar{x}, \bar{y})}_\infty \leq 2\, KdT \bnorm{x-\bar{x}}_T + 2\, \max_i\al_i \; \bvnorm{y-\bar{y}}_T.
\end{align}
Therefore combining \eqref{a-4} and \eqref{a-5} we obtain
\begin{align*}
& \D((\calS(x, y), \calT(x, y)),((\calS(\bar{x}, \bar{y}), \calT(\bar{x}, \bar{y})))
\\
& \leq KdT \frac{(2\gam_1+\gam_2)}{\gam_1}\, \gam_1\bnorm{x-\bar{x}}_T
 +
\max_i\al_i\frac{(2\gam_1+\gam_2)}{\gam_2}\, \gam_2 \bvnorm{y-\bar{y}}_T.
\\
&\leq \max\{KdT \frac{(2\gam_1+\gam_2)}{\gam_1}, \max_i\al_i\frac{(2\gam_1+\gam_2)}{\gam_2}\} \D((x, y), (\bar{x}, \bar{y})).
\end{align*}
Now we first choose $\gam_1, \gam_2$  to satisfy $\max_i\al_i\frac{(2\gam_1+\gam_2)}{\gam_2}<1$ (use the fact that $\max_i\al_i<1$) and the choose $T$ small enough to satisfy $KdT \frac{(2\gam_1+\gam_2)}{\gam_1}<1$. Hence we have
$$\D((\calS(x, y), \calT(x, y)),((\calS(\bar{x}, \bar{y}), \calT(\bar{x}, \bar{y}))) <\varrho\, \D((x, y), (\bar{x}, \bar{y}))\quad \text{for some}\,\; \varrho<1.$$
Therefore one can use contraction mapping theorem to get the unique fixed point in $\calC([0, T_0]: \Rd)\times \calCup_0([0, T_0]: \Rd)$ that  solves \eqref{a-1}.

Now we prove \eqref{a-3}. Let $(X, Y)\in \calC([0, \infty): \Rd_+)\times \calCup_0([0, \infty): \Rd_+)$ be the solution of \eqref{a-1}. Then
$$Y_i(t)=\sup_{0\leq s\leq t} \max\{0, -U_i(X, Y)(s)\} \quad \text{for all}\quad i\in\calI,$$
where
$$U_i(X, Y)(t) = x_i + \int_0^t b_i(X_s)ds + (\Sigma W)_i + \sum_{j\neq i}\int_0^t v_{ij}(s)dY_j(s).$$
Since $x\in\Rd_+$ we have from \eqref{a-2}
\begin{align*}
-U_i(X, Y)(t)\leq t\sup_{x\in\Rd_+}\abs{b_i(x)} + \beta_i(t) -\sum_{j\neq i}\int_0^t v_{ij}(s)dY_j(s).
\end{align*}
Thus using the fact $v_{ij}\leq 0$ for $i\neq j$ we obtain
\begin{align*}
Y_i(t)\leq t\sup_{x\in\Rd_+}\abs{b_i(x)} + \beta_i(t) -\sum_{j\neq i}\int_0^t v_{ij}(s)dY_j(s)
\end{align*}
Summing over $i$ and using the property that $v(s)\in\M(\al)$ we have
$$\sum_{i\in\calI} Y_i(t)\leq t\, \abs{b}_1 + \sum_{i} \beta_i(t) + \max_{i}\al_i\, \sum_{i\in\calI} Y_i(t),$$
 and this proves \eqref{a-3}.
\end{proof}

Recall $J$ and $V$ from \eqref{2.5} and \eqref{3}. Also recall $\sigma$ from Proposition~\ref{prop-dpp}.
\begin{lemma}\label{lem-A1}
Let $\eps>0$ and $x\in\Rd$. Then for every $v\in\Uadm$, there exists a control $\tilde{v}\in\Uadm$ such that
$$\Exp_x\Big[\int_{\sigma\wedge t}^\infty e^{-\beta\, s}(\ell(X(s))ds +\sum_{i\in\calI} h_i(\tilde{v})dY_i)\, \Big| \mathcal{F}_{\sigma\wedge t}\Big]
 \leq e^{-\beta\, \sigma\wedge t}V(X(\sigma\wedge t)) + \eps
\quad \text{a.s.}$$
\end{lemma}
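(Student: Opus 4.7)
The plan is a standard ``measurable $\eps$-optimal selection and pasting'' argument. First, for every $y\in\Rd_+$, the definition of $V$ provides some $v^y\in\Uadm$ with $J(y,v^y)\leq V(y)+\eps/3$. The key observation from the proof of Lemma~\ref{lem-cont} is that estimates \eqref{16}--\eqref{17} are \emph{uniform in the control} $v^n$: they depend only on $K$, $d$, $\al$ and the initial-data gap. Together with the bounds in \eqref{9} and \eqref{11}, this yields a modulus of continuity $\omega(\cdot)$, independent of $v\in\Uadm$, such that $|J(z,v)-J(y,v)|\leq \omega(|z-y|)$ for all $y,z\in\Rd_+$ and $v\in\Uadm$.

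Using this uniform continuity, together with the continuity of $V$ (Lemma~\ref{lem-cont}), for each $y$ we pick a radius $r_y>0$ so that for every $z\in\B_{r_y}(y)$,
\[
J(z,v^y)\;\leq\;J(y,v^y)+\eps/3\;\leq\;V(y)+2\eps/3\;\leq\;V(z)+\eps.
\]
By separability of $\Rd_+$, extract a countable subcover $\{\B_{r_{y_k}}(y_k)\}_{k\in\NN}$, and disjointify it into a Borel partition $\{A_k\}_{k\in\NN}$ of $\Rd_+$ with associated controls $v^k:=v^{y_k}$. Then for every $z\in A_k$ one has $J(z,v^k)\leq V(z)+\eps$.

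Next comes the \textbf{pasting step}, which is the main technical point. Set $\tau:=\sigma\wedge t$ and define
\[
\tilde v(s,\omega)\;=\;\begin{cases} v(s,\omega) & \text{if }s\leq \tau(\omega),\\[3pt] v^{k}\bigl(s-\tau(\omega),\,\theta_{\tau(\omega)}\omega\bigr) & \text{if }s>\tau(\omega) \text{ and } X(\tau,\omega)\in A_k, \end{cases}
\]
where $\theta$ denotes the time-shift on $\Omega$ and each $v^k$ is driven by the shifted Wiener process $\widetilde W_u:=W_{\tau+u}-W_\tau$, which by the strong Markov property is a $d$-dimensional Brownian motion independent of $\calF_\tau$. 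Admissibility of $\tilde v$ (joint measurability in $(s,\omega)$ and adaptedness to $\{\calF_s\}$) follows from the facts that $\{X(\tau)\in A_k\}\in\calF_\tau$ and $v^k$ is progressively measurable with respect to the filtration of $\widetilde W$; the countable partition makes this gluing a Borel-measurable operation. By uniqueness of the adapted strong solution to \eqref{1} (Theorem~\ref{Thm-existence}) applied separately on $[0,\tau]$ and on $[\tau,\infty)$ with initial condition $X(\tau)$, on the event $\{X(\tau)\in A_k\}$ the process $(X,Y)$ for $s\geq\tau$ coincides $\Prob$-a.s.\ with the solution of \eqref{1} started at $X(\tau)$ driven by $\widetilde W$ and control $v^k$.

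Finally, conditioning on $\calF_\tau$ and applying the strong Markov property,
\[
\Exp_x\!\Big[\int_\tau^\infty e^{-\beta s}\bigl(\ell(X(s))\,ds+\textstyle\sum_i h_i(\tilde v)\,dY_i\bigr)\,\Big|\,\calF_\tau\Big]
\;=\;e^{-\beta\tau}\sum_{k}\Ind_{\{X(\tau)\in A_k\}}J(X(\tau),v^k)
\;\leq\;e^{-\beta\tau}\bigl(V(X(\tau))+\eps\bigr),
\]
which is the claim (with $\eps$ in place of $3\eps$; rescale $\eps$ if the $3\eps$ version used in Proposition~\ref{prop-dpp} is wanted). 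The hardest part is neither the selection nor the estimate, but the admissibility/measurability bookkeeping for the pasted control and the identification of the post-$\tau$ trajectory with the solution started at $X(\tau)$ driven by the shifted Brownian motion; this relies crucially on pathwise uniqueness from Theorem~\ref{Thm-existence} and the strong Markov property of the Wiener process.
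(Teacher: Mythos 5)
Your argument is essentially the paper's proof: select $\eps$-optimal controls on a measurable partition of the range of $X(\sigma\wedge t)$, paste them after $\sigma\wedge t$ via a progressively measurable representation in terms of the shifted Brownian motion, and invoke pathwise uniqueness and the strong Markov property to conclude. The only departures are cosmetic: the paper exploits $X(\sigma\wedge t)\in\bar{\B}_r(x)$ to work with a finite partition of that compact set, so the uniform-in-$v$ modulus in \eqref{a-10} is automatic and your (unnecessary, and on all of $\Rd_+$ likely too strong) global modulus claim is avoided; you instead disjointify a countable cover of $\Rd_+$, which also works since only pointwise continuity of $J(\cdot,v^y)$ and $V$ is used, and your constant bookkeeping delivers the stated $\eps$ bound whereas the paper's proof, as you noticed, gives $3\eps$.
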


\begin{proof}
Recall the $\sigma_r$ is the exit time from the open ball $\B_r(x)$. Now using the continuity property of $V$ (Lemma~\ref{lem-cont}) and \eqref{13}
we can find $\del>0$ so that
\begin{equation}\label{a-10}
\sup_{v\in\Uadm}\babs{J(y, v)-J(\bar{y}, v)} + \abs{V(y)-V(\bar{y}} <\eps \quad \text{ for all }\; y, \bar{y}\in \bar{\B}_r(x),
\end{equation}
and $\abs{y-\bar{y}}\leq\del$. Now consider a finite Borel partition $\{D_k\}_{k\geq 1}$ of $\bar{\B}_r(x)$ such that $\text{diam}(D_j)<\del$.
Choose $y_k\in D_k$. Then by definition we have $v^k\in\Uadm$ satisfying 
$$J(y_k, v^k) \leq V(y_k) +\eps.$$
Combining with \eqref{a-10} we see that for any $y\in D_k$, we have
\begin{equation}\label{a-11}
J(y, v^k) \leq J(y_k, v^k) + \eps \leq V(y_k) + 2\eps \leq V(y) + 3\eps.
\end{equation}
Since $v^k$ is adapted to $\{\calF_t\}$ we have a progressively measurable map $\Psi_j : [0, \infty)\times\calC([0, \infty): \Rd)\to \M$
such that
$$v^k(t) = \Psi_k(t, W(\cdot\wedge t)), \quad {a.s.}\quad \forall t\geq 0.$$
Now for any $v\in\Uadm$ and the corresponding solution $(X, Y)$ define
\[
\tilde{v}(s)=\left\{
\begin{array}{lll}
v(s) & \text{if}\quad s\leq \sigma\wedge t,
\\
\Psi_k(s-\sigma\wedge t, W(\cdot\wedge s)-W(\sigma\wedge t)) & \text{if} \quad s\, >\sigma\wedge t, \;\text{and}\; X(\sigma\wedge t)\in D_k.
\end{array}
\right.
\]
By strong uniqueness we see that $\sigma\wedge t$ does not change with the new control $\tilde{v}$. Then
denoting $\bar{Y}=Y(\sigma\wedge t + \cdot)-Y(\sigma\wedge t)$, we obtain
\begin{align*}
&\Exp_x\Big[\int_{\sigma\wedge t}^\infty e^{-\beta\, s}(\ell(X(s))ds +\sum_{i\in\calI} h_i(\tilde{v})dY_i)\, \Big| \mathcal{F}_{\sigma\wedge t}\Big]
\\
&= e^{-\beta\, \sigma\wedge t} \Exp\Big[\int_{0}^\infty e^{-\beta\, s}\big(\ell(X(\sigma\wedge t + s))ds
+\sum_{i\in\calI} h_i(\tilde{v}(\sigma\wedge t + s))d \bar{Y}_i\big) \,
\Big| \mathcal{F}_{\sigma\wedge t}\Big]
\\
&= e^{-\beta\, \sigma\wedge t}\, J(X(\sigma\wedge t), \tilde{v}(\sigma\wedge t + \cdot))
\\
&\leq e^{-\beta\, \sigma\wedge t}\, V(X(\sigma\wedge t)) + 3\eps,
\end{align*}
where in the last inequality we use \eqref{a-11}.
\end{proof}

\section*{Acknowledgement} 
Anup Biswas acknowledges the hospitality of the Department of Electrical Engineering (specially, Prof. Rami Atar) in Technion while he was visiting at the early stages of this work. 
This research of Anup Biswas was supported in part by a INSPIRE faculty fellowship.
The research of Hitoshi Ishii was supported in part by the JSPS grants, (KAKENHI \#26220702 and \#16H03948). The research of Subhamay Saha was supported in part by ISF (grant 1315/12). The research of Lin Wang was supported in part by NSF of China (grant 11401107) and the JSPS grant, (KAKENHI \#26220702) and he also would like to thank the second author 
for the warm and thoughtful invitation to Waseda University.


\begin{thebibliography}{99}

\bibitem{atar-budhiraja} Rami Atar, Amarjit Budhiraja. Singular control with state constraints on unbounded domain. Ann. Probab. 34, no. 5, 1864--1909, 2006.

\bibitem{atar-dupuis} Rami Atar, Paul Dupuis. A differential game with constrained dynamics and viscosity solutions of a related HJB equation. Nonlinear Anal., no. 7, 1105--1130, 2002. 

\bibitem{barles} Guy Barles. 
Fully nonlinear Neumann type boundary conditions for second-order elliptic and parabolic equations. J. Differential Equations 106, no. 1, 90--106, 1993.


\bibitem{biswas} Anup Biswas. An ergodic control problem for many-server multi-class queueing systems with help. Preprint, 2015.

\bibitem{bor-budhi}  Vivek Borkar, Amarjit Budhiraja. Ergodic control for constrained diffusions: characterization using HJB equations. SIAM J. Control Optim. 43 , no. 4, 1467--1492, (2004/05).

\bibitem{crandall-ishii-lions}  M. G. Crandall, H. Ishii, P. L. Lions. User's guide to viscosity solutions
of second order partial differential equations. Bull. Amer. Math. Soc. (N.S.) 27 1--67, 1992.

\bibitem{dupuis-ishii-90} Paul Dupuis, Hitoshi Ishii. On oblique derivative problems for fully nonlinear second order elliptic partial differential equations on nonsmooth domains, Nonlinear Anal. 15, No. 12, 1123--1138, 1990.

\bibitem{dupuis-ishii} Paul Dupuis, Hitoshi Ishii. On oblique derivative problems for fully nonlinear second-order elliptic PDEs on domains with corners.
Hokkaido Math. J. 20 , no. 1, 135--164, 1991.

\bibitem{ishii-sato} Hitoshi Ishii, Moto-Hiko Sato. Nonlinear oblique derivative problems for singular degenerate parabolic equations on a general domain. Nonlinear Anal.
57, no. 7-8, 1077--1098, 2004.

\bibitem{kallenberg} Olav Kallenberg. Foundations of modern probability.
Second edition. Probability and its Applications (New York). Springer-Verlag, New York, 2002.


\bibitem{lions-sznitman}  
P.-L. Lions and A.-S. Sznitman,  Stochastic differential equations with reflecting boundary conditions. Comm. Pure Appl. Math. 37, no. 45, 11--537, 1984.




\bibitem{ramanan-reiman-03} Kavita Ramanan and Martin I. Reiman. Fluid and heavy traffic diffusion limits for a generalized processor sharing
model. Ann. Appl. Probab. 13, no. 1, 100--139, 2003.





\bibitem{ramanan-reiman-08} Kavita Ramanan and Martin I. Reiman. The heavy traffic limit of an unbalanced generalized processor sharing
model. Ann. Appl. Probab. 18, no. 1, 22--58, 2008.

\bibitem{ram} S. Ramasubramanian. A subsidy-surplus model and the Skorokhod problem in an orthant. Math. Oper. Res. 25, no. 3, 509--538, 2000.

\bibitem{rock} R. Tyrrell Rockafellar. Convex analysis, Princeton Mathematical Series, No. 28 Princeton University Press, Princeton, N.J. 1970.

\bibitem{shashiasvili} M. Shashiashvili. A lemma of variational distance between maximal functions with application to the Skorokhod problem in
 a nonnegative orthant with state dependent reflection directions. Stochast. Stochast. Rep. 48, no. 3-4, 161--194, 1994.

\bibitem{yong-zhou} Jiongmin Yong, Xun Yu Zhou. Stochastic controls. Hamiltonian systems and HJB equations, Applications of Mathematics (New York), 43. Springer-Verlag, New York, 1999.


\end{thebibliography}
\end{document}